\def\thesection{\arabic{section}}
\def\theequation{\thesection.\arabic{equation}}
\newcommand{\ds} {\displaystyle}
\newcommand{\e}{\epsilon}
\newcommand{\pa} {\partial}
\newcommand{\al} {\alpha}
\newcommand{\ga} {\gamma}
\newcommand{\Om} {\Omega}
\newcommand{\ra} {\rightarrow}
\newcommand{\De} {\Delta}
\newcommand{\la} {\lambda}
\newcommand{\La} {\Lambda}
\newcommand{\noi} {\noindent}
\newcommand{\na} {\nabla}
\newcommand{\uline} {\underline}
\newcommand{\oline} {\overline}
\newcommand{\mb} {\mathbb}
\newcommand{\mc} {\mathcal}
\def\theequation{\@arabic{\c@section}.\@arabic{\c@equation}}
\def\QED{\hfill {$\square$}\goodbreak \medskip}
\newtheorem{Theorem}{Theorem}[section]
\newtheorem{Lemma}[Theorem]{Lemma}
\newtheorem{Proposition}[Theorem]{Proposition}
\newtheorem{Corollary}[Theorem]{Corollary}
\newtheorem{Remark}[Theorem]{Remark}
\newtheorem{Definition}[Theorem]{Definition}
\begin{document}
{\vspace{0.01in}}

\title
{\sc   On Dirichlet problem for fractional $p$-Laplacian with singular nonlinearity}

\author{
{\bf  Tuhina Mukherjee\footnote{email: tulimukh@gmail.com}}\; and\; {\bf K. Sreenadh\footnote{e-mail: sreenadh@gmail.com}}\\
{\small Department of Mathematics}, \\{\small Indian Institute of Technology Delhi}\\
{\small Hauz Khaz}, {\small New Delhi-16, India}\\
 }

\date{}

\maketitle

\begin{abstract}

\noi In this article, we study the following fractional $p$-Laplacian equation with critical growth singular nonlinearity
\begin{equation*}
 \quad (-\De_{p})^s u = \la u^{-q} + u^{\alpha}, u>0 \; \text{in}\;
\Om,\quad u = 0 \; \mbox{in}\; \mb R^n \setminus\Om.
\end{equation*}
where  $\Om$ is a bounded domain in $\mb{R}^n$ with smooth boundary $\partial \Om$, $n > sp, s \in (0,1), \la >0, 0 < q \leq 1 $ and $\alpha\le p^*_s-1$. We use variational methods to show the existence and multiplicity of positive solutions of above problem with respect to parameter $\la$.
\medskip

\noi \textbf{Key words:}  fractional $p$-Laplacian, Critical exponent, Singular nonlinearities

\medskip

\noi \textit{2010 Mathematics Subject Classification:} 35R11, 35R09,35A15.

\end{abstract}

\section{Introduction}
Let $s \in (0,1)$ and let $\Om \subset \mb R^n$ is a bounded domain with smooth boundary, $n>sp$. We consider the following problem with singular nonlinearity :
\begin{equation*}
(P_{\la}): \quad
 \quad (-\De_{p})^s u = \la u^{-q} + u^{\alpha},\quad  \;u>0 \; \text{in}\;
\Om, \quad u = 0 \; \mbox{in}\; \mb R^n \setminus\Om.
\quad
\end{equation*}
where $\la >0, 0 < q \leq 1 , \alpha \le p^*_s-1,  p^*_s=\frac{np}{n-sp}$ and $(-\De_p)^s$ is the fractional $p$-Laplacian operator defined as
\[(-\De_p)^s u(x) = - 2 \lim_{\epsilon \searrow 0} \int_{\mb R^n \setminus B_{\epsilon}(x)} \frac{|u(x)-u(y)|^{p-2}(u(x)-u(y))}{|x-y|^{n+sp}}dy \; \text{for all }\; x \in \mb R^n. \]
Recently a lot of attention is given to the study of fractional and non-local operators of elliptic type due to concrete real world applications in finance, thin obstacle problem, optimization, quasi-geostrophic flow etc.

\noi Semilinear Dirichlet  problem for  fractional Laplacian using variational methods is recently studied in \cite{XcT, s1, s2}.
The existence and multiplicity results for
non-local operators like fractional Laplacian with combination of convex and concave type non linearity
like $u^q+\la u^p, p,q>0$ is studied in  \cite{bss,bc,mb,mb1,XsY,xy}.
Eigenvalue problem for the fractional $p$-Laplacian and properties like simplicity of smallest eigenvalue is
studied in \cite{EPL,GG}. The Brezis-Nirenberg type existence result is studied in \cite{s3}. Existence results with convex-concave type regular nonlinearities is studied in \cite{ss1}.

\medskip
\noi In the local setting ($s=1$), the paper by Crandal, Rabinowitz and Tartar \cite{crt} is the starting point on semilinear problem with singular nonlinearity. A lot of work has been done related to existence and multiplicity results for Laplacian and $p$-Laplacian with singular non-linearity, see \cite{AJ,GST,GST2,diaz,coc,GR1,GR2}. {}{ In \cite{diaz,coc}, the authors studied the singular problems of the type
\[-\Delta u= g(x,u)+ h(x,\la u),\; \text{in}\; \Om,\quad u=0\; \text{ on}\;  \partial \Om, g(x,u)\in L^1(\Om)\]
with $g(x,u)\sim u^{-\al}$. They studied the existence of solutions under suitable conditions on $g$ and $h$.
In \cite{GR1} and \cite{GR2}, authors conside the singular problems of the type
\[ -\Delta u + K(x)g(u)= \la f(x,u)+ \mu h(x) \; \text{ in } \Om, \; u=0 \; \text{ on } \partial \Om, \]
where $\Om$ is smooth bounded domain in $\mathbb R^n$, $n\geq 2$ and $\la >0$. Here, $h,K \in C^{0,\gamma}(\Om)$ for some $0< \gamma<1$ and $h>0$ in $\Om$, $f : [0, \infty) \rightarrow [0, \infty)$ is a H\"{o}lder continuous function which is positive on $\bar{\Om} \times (0, \infty)$ that is sublinear at $\infty$ and of superlinear at $0$.
 The function $g \in C^{0,\gamma}(0,\infty)$ for some $0<\gamma<1$ is non negative and non increasing such that $\lim\limits_{s \rightarrow 0^+ } g(s) = +\infty$. They proved several results related to existence and non existence of positive solutions of above problems taking into account both the sign of the potential $K$ and the decay rate around the origin of the singular nonlinearity $g$. Several authors conside the problems of Lane-Emden-Fowler type with singular nonlinearity such as \cite{louis,florica,GR3}. 
In addition, some bifurcation results  has been proved in \cite{GR3} for the problem
\[-\Delta u = g(u) + \la|\nabla u|^p + \mu f(x, u)\; \text{in } \Om,\; u > 0\; \text{in } \Om,\; u = 0\; \text{on } \partial \Om,\]
where $\la,\mu \geq 0$, $0 < p \leq 2$, $f$ is non-decreasing with respect to the second variable and  $g(u)$ behaves like $u^{-\alpha}$ around the origin.   The asymptotic behaviour of the solutions is shown by constructing suitable sub- and supersolutions combined with the maximum principles. We also refer \cite{GHS, HM} as a part of previous contributions to this field. For detailed study and recent results on singular problems we refer to  \cite{book1}.}

\noi In \cite{GST}, authors studied the critical growth singular problem
\[
-\De_{p} u = \la u^{-\delta}+ u^{q}, \quad  u>0 \; \text{in}\; \Omega, \quad u=0 \; \text{on}\; \pa \Om
\]
where $0<\delta<1$ and $p-1 < q \leq p^*-1 $ and $\De_p u= div(|\nabla u|^{p-2} \na u)$. Using the variational methods, they proved the existence of multiple solutions with restriction on $p$ and $q$ in the spirit of \cite{AGP2,Gossez}. Among the works dealing with elliptic equations with singular and critical growth terms, we cite also  \cite{AJ,Am,Sk,GF,DSS} and references there-in, with no attempt to provide a complete list.

\noindent Recently, the study of the fractional elliptic equations attracted  lot of interest by researchers in nonlinear analysis.
There are many works on existence of a solution for fractional elliptic equations with regular nonlinearities like $u^q+\la u^p, \; p,\; q>0$. The sub critical growth problems are studied in  \cite{XcT,s1,s2}  and critical exponent problems are studied in \cite{bc,mb,mb1,s3}. Also, the multiplicity of solutions by  the method of Nehari manifold and fibering maps has been investigated in \cite{ss1,xy,zlh}. For detailed study and recent results on this subject we refer to \cite{book2}.
\noi In \cite{peral} the authors the singular problem
\begin{equation*}
(-\De)^s u = \la\frac{f(x)}{u^\ga} + M u^{p}, \;
 u>0\;\text{in}\;\Om, \quad
 u = 0 \; \mbox{in}\; \mb R^n \setminus\Om,
\end{equation*}
where  $n>2s$, $M\ge 0$, $0<s<1$, $\ga>0$, $\la>0$, $1<p<2_{s}^{*}-1$ and $f\in L^{m}(\Om)$, $m\geq 1$ is a nonnegative function. Here authors studied the existence of distributional solutions for small $\la$ using the uniform estimates of  $\{u_n\}$ which are solutions of the regularized problems with singular  term $u^{-\ga}$ replaced by $(u+\frac{1}{n})^{-\ga}$. In \cite{TS}, the critical ($p=2_{s}^{*}-1$) singular problem is studied where  multiplicity results are obtained using the Nehari manifold approach.

\noindent There are many works on the study of  $p$-fractional equations with polynomial type nonlinearities. In cite{ss1} authors studied the subcritical problems using Nehari manifold and fibering maps. In \cite{s3}, Brezis-Nirenberg type critical exponent problem is studied. We also \cite{bms,ss2,Asm} and references therein.
To the best of our knowledge, there are no works on existence or multiplicity results with singular nonlinearities.

\medskip
\noi In this paper, we study the existence and multiplicity results with convex-concave type singular nonlinearity. Here we follow the approach as
 in \cite{Hirano}. We obtain our results by studying the existence of minimizers that arise out of structure of Nehari manifold. We would like to remark
 that the results proved here are new even for the case $q=1$. Also the existence result is sharp in the sense that we show the existence
 of $\Lambda$ such that  $(0,\Lambda)$ is the maximal range for $\lambda$ for which the solution exists. We show the existence of second solution in the sub-critical case for suitable range of $\lambda$ where the fibering maps has two critical points. We also show some regularity results on weak solutions.

\medskip
\noi The paper is organized as follows: In section 2, we present some preliminaries on function spaces requi for variational settings. In section 3, we study the corresponding Nehari manifold and properties of minimizers. In section 4 and 5,  we show the existence of minimizers and solutions and state the main results. In section 6, we show some regularity results and section 7 is devoted to the maximal range of $\lambda$ for existence of solutions.
\section{Preliminaries and Main Results}
\noi In \cite{ss1}, authors  discussed the Dirichlet
boundary value problem involving $p$-fractional Laplace operator using the variational techniques.  Due to non-localness of the fractional
Laplacian, they introduced the function space $(X_0,\|.\|_{X_0})$.
The space $X$ is defined as
\[X= \left\{u|\;u:\mb R^n \ra\mb R \;\text{is measurable},\;
u|_{\Om} \in L^p(\Om)\;
 \text{and}\;  \frac{(u(x)- u(y))}{ |x-y|^{\frac{n+sp}{p}}}\in
L^p(Q)\right\},\]
\noi where $Q=\mb R^{2n}\setminus(\mc C\Om\times \mc C\Om)$ and
 $\mc C\Om := \mb R^n\setminus\Om$. The space X is endowed with the norm
\begin{align*}
 \|u\|_X = \|u\|_{L^p(\Om)} + \left[u\right]_X, \quad \text{where}\; \left[u\right]_X= \left( \int_{Q}\frac{|u(x)-u(y)|^{p}}{|x-y|^{n+sp}}dx
dy\right)^{\frac{1}{p}}.
\end{align*}
 Then we define $ X_0 = \{u\in X : u = 0 \;\text{a.e. in}\; \mb R^n\setminus \Om\}$. Also, there exists a constant $C>0$ such that $\|u\|_{L^{p}(\Om)} \le C [u]_X$ for all $u\in X_0$. Hence,  $\|u\|=[u]_X$ is a norm on $X_0$ and  $X_0$ is a Hilbert space. Note that the norm $\|.\|$ involves the interaction between $\Om$ and $\mb R^n\backslash\Om$. We denote $\|.\|_{L^p(\Om)}$ as $|.|_p$ and $\|.\|=[.]_X$ for the norm in $X_0$. Now for each $\beta \geq 0$, we set
\begin{equation}\label{eq00}
 C_{\beta} = \sup \left \{  |u|_{\beta}^{\beta} \; :\;u \in X, \; \|u\| = 1\right \}.
 \end{equation}
Then $C_0  = |\Om| $ = Lebesgue measure of $\Om$ and
$\int_{\Om} |u|^\beta dx \leq C_\beta \|u\|^{\beta}$, for all $ u \in X_0$. From the embedding results in \cite{ss1}, we know that $X_0$ is continuously and compactly embedded in $L^r(\Om)$  where $1\leq r < p^*_s$ and the embedding is continuous but not compact if $r= p^*_s$. We define the best constant of the embedding $S$ as
 \begin{equation*}
 S = \inf \{ \|u\|^p \; :\; u \in X_0,\; |u|^p_{p^*_s}=1\}.
 \end{equation*}

\begin{Definition}
We say $u\in X_0$ is a positive weak solution of $(P_\la)$
if $u>0$ in $\Om$ and
$$ \int_Q \frac{|u(x)-u(y)|^{p-2}(u(x)-u(y))(\psi(x)-\psi(y))}{|x-y|^{n+sp}} ~dxdy -  \int_\Om \left( \la u^{-q}  -  u^{\alpha}\right) \psi ~dx = 0 $$ for all $\psi \in C^{\infty}_c(\Om).$
\end{Definition}
We define the  functional associated to $(P_\la)$ as $I_{\la} : X_{0} \rightarrow (-\infty, \infty]$ as
\[ I_{\la}(u) = \frac{1}{p} \int_Q \frac{|u(x) - u(y)|^p}{|x-y|^{n+sp}}dxdy - \la \int_\Om  G_q(u) dx - \frac{1}{\alpha+1} \int_\Om |u|^{\alpha+1}dx \]
where $G_q : \mb{R} \rightarrow [-\infty, \infty)$ is the function defined by
$$G_q(x)=
\left\{
	\begin{array}{ll}
		\frac{|x|^{1-q}}{1-q}  & \mbox{if } 0<q<1 \\
		\ln|x| & \mbox{if } q=1
	\end{array}
\right.$$
for $x \in \mb{R}$. For each $0 < q \leq 1$, we set
$\ds X_+ = \{ u \in X_0 : u \geq 0\}$ and
$$X_{+,q} = \{ u \in X_+ : u \not\equiv 0, \; G_q(u) \in L^1(\Om)\}.$$
Notice that $ X_{+,q} = X_+ \setminus \{0\}$ if $0<q<1$ and $X_{+,1} \neq \emptyset$ if $\partial \Om$ is, for example, of $C^2$. We will need the following important Lemma.
\begin{Lemma}\label{lem2.1}
For each $w \in X_{+}$, there exists a sequence $\{w_k\}$ in $X_{0}$ such that,$w_k \rightarrow w$ strongly in $X_0$, where $0 \leq w_1 \leq w_2 \leq \ldots$ and $w_k$ has compact support in $\Om$, for each k .
\end{Lemma}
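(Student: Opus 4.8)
The plan is to build the approximating sequence in two stages: first truncate $w$ so that it is supported in a compact subset of $\Om$, then mollify to land in $X_0$ (indeed one can even get Lipschitz functions, but $X_0$-convergence is all we need). Throughout, monotonicity $0\le w_1\le w_2\le\cdots$ should be arranged by choosing the truncation/cutoff parameters to increase to their limiting values, and by being careful that the mollification does not destroy the ordering — one can handle this by using a fixed compactly supported mollifier and choosing a single mollification scale that shrinks slowly relative to the truncation step, or simply by a diagonal argument after establishing convergence of each intermediate family.

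First I would fix an exhaustion of $\Om$ by compact sets: let $\Om_k=\{x\in\Om:\operatorname{dist}(x,\pa\Om)>1/k\}$ and pick cutoffs $\eta_k\in C_c^\infty(\Om)$ with $0\le\eta_k\le 1$, $\eta_k\equiv 1$ on $\Om_k$, $\eta_k\le\eta_{k+1}$, and $\eta_k\to 1$ pointwise in $\Om$. Set $v_k=\eta_k w$. Then $0\le v_1\le v_2\le\cdots\le w$ pointwise, each $v_k$ has compact support in $\Om$, and $v_k\in X_0$: the key point is the Gagliardo seminorm estimate
\[
[\eta w]_X^p \le C\big([w]_X^p + \|\na\eta\|_\infty^p \, |w|_{L^p(\Om)}^p + \|\eta\|_\infty^p\, (\text{lower-order local terms})\big),
\]
which is the standard product/Leibniz-type bound for fractional seminorms (available since $\Om$ is bounded and $\eta_k$ is smooth with bounded support), so $v_k\in X$ and $v_k=0$ outside $\Om$ gives $v_k\in X_0$. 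To show $v_k\to w$ in $X_0$, write $w-v_k=(1-\eta_k)w$ and split the Gagliardo double integral over $Q$ into the region where both points lie in $\Om_j$ (for fixed large $j$) and its complement; on the first region $1-\eta_k\to 0$ uniformly once $k\ge j$, and the complementary region has small contribution to $[w]_X^p$ uniformly in $k$ because $w\in X$ and $|Q\setminus(\Om_j\times\Om_j)|$-type tails vanish as $j\to\infty$. A dominated-convergence argument on $Q$, using $|w(x)-v_k(x)-(w(y)-v_k(y))|^p\le 2^{p}(|w(x)-w(y)|^p + \|\na\eta_k\|^p|x-y|^p\wedge 4|w|^p\cdots)$ as an integrable majorant after the usual manipulations, gives $[w-v_k]_X\to 0$; since also $|w-v_k|_{L^p(\Om)}\to 0$ by dominated convergence, $v_k\to w$ in $X$.

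Next, for each fixed $k$, mollify: $v_k*\rho_\e\in C_c^\infty(\Om)$ for $\e$ small (support stays inside $\Om$ since $\operatorname{supp}v_k\Subset\Om$), these are nonnegative, and $v_k*\rho_\e\to v_k$ in $X_0$ as $\e\to 0$ — convergence in the Gagliardo seminorm follows from the fact that convolution is a contraction on the $X$-seminorm ($[v*\rho_\e]_X\le[v]_X$ by Minkowski's integral inequality) together with $v_k*\rho_\e\to v_k$ in $L^p$ and a lower-semicontinuity/uniform-integrability argument, or more directly from density of smooth functions in the Gagliardo norm on the whole space applied to the compactly supported $v_k$. Finally, choose $\e=\e(k)\to 0$ fast enough that $\|v_k*\rho_{\e(k)}-v_k\|_{X_0}<1/k$, set $w_k:=v_k*\rho_{\e(k)}$; then $w_k\in X_0$ with compact support in $\Om$, $w_k\ge 0$, and $w_k\to w$ in $X_0$ by the triangle inequality. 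To recover the monotonicity $w_1\le w_2\le\cdots$ exactly, either replace $w_k$ by $\max(w_1,\dots,w_k)$ (still in $X_0$, still compactly supported, still converging, and one checks $\max$ of finitely many elements stays in $X_0$ via the standard $[\,\cdot\,]_X$ estimate $[\max(u,v)]_X^p+[\min(u,v)]_X^p\le[u]_X^p+[v]_X^p$), or note $\max$-mollification preserves order if the same $\rho$ and a common scale are used on the already-ordered $v_k$'s.

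The main obstacle is reconciling the monotonicity requirement with the mollification step: convolution does not in general preserve pointwise ordering once different scales $\e(k)$ are used, so the cleanest route is the replacement $w_k\mapsto\max(w_1,\dots,w_k)$ at the end, and the work there is verifying that finite maxima of $X_0$-functions with compact support remain in $X_0$ (lattice property of $X_0$) and that taking maxima does not spoil the convergence $w_k\to w$ — the latter because $\max(w_1,\dots,w_k)\to\sup_k w_k$ and one must argue $\sup_k w_k = w$ in $X_0$, which follows since each $w_k\le w + o(1)$ and $w_k\to w$. The fractional product estimate for $[\eta_k w]_X$ and the tail estimate for $[(1-\eta_k)w]_X$ are routine but need the hypothesis that $\pa\Om$ is smooth enough that such cutoffs exist with controlled gradients; this is exactly why the statement is placed after the function-space preliminaries.
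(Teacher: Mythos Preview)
Your approach is genuinely different from the paper's, and there is a real gap in the monotonicity step.

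The paper does not build the compactly supported approximants by multiplying with cutoffs. Instead it takes for granted the density of nonnegative $C_c^\infty(\Om)$ functions in $X_+$, picks such a sequence $\psi_k\to w$, and sets $z_k=\min\{\psi_k,w\}$. The point of the $\min$ is twofold: $z_k$ inherits compact support from $\psi_k$, and crucially $z_k\le w$. With this in hand, the monotone sequence is built inductively by $w_{k+1}=\max\{w_k,z_{r_{k+1}}\}$ for a suitable subsequence. Because every $z_m\le w$ and (inductively) $w_k\le w$, one has $w-\max\{w_k,z_m\}=\min\{w-w_k,\,w-z_m\}\ge 0$, and $\max\{w_k,z_m\}-z_m=(w_k-z_m)^+\to(w_k-w)^+=0$ in $X_0$ as $m\to\infty$; so $\max\{w_k,z_m\}\to w$ and the construction closes.

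Your ``replace $w_k$ by $\max(w_1,\dots,w_k)$'' step does not close the same way, because after mollification you have no reason for $w_j\le w$. Strong convergence $w_k\to w$ in $X_0$ does not imply $\sup_k w_k=w$: along any subsequence converging a.e., one only gets $\limsup_k w_k=w$ a.e., while $\sup_j w_j$ can genuinely exceed $w$ on a set of positive measure (early terms may be large where $w$ is small). The claim ``each $w_k\le w+o(1)$ hence $\sup_k w_k=w$'' is the gap. The paper's $\min$-with-$w$ trick is exactly the missing ingredient.

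Two further remarks. First, the mollification stage is unnecessary for the lemma as stated: you only need $w_k\in X_0$, not smoothness, and your $v_k=\eta_k w$ already lie in $X_0$, are compactly supported, and satisfy $0\le v_1\le v_2\le\cdots\le w$; so if $v_k\to w$ in $X_0$ you are done without ever touching mollifiers or running into the monotonicity problem. Second, your dominated-convergence argument for $[(1-\eta_k)w]_X\to 0$ is incomplete: the proposed majorant contains $\|\na\eta_k\|_\infty^p$, and since $\eta_k$ transitions from $0$ to $1$ across a strip of width $\sim 1/k$, this blows up with $k$, so it is not a $k$-independent dominating function. The convergence is true, but it is essentially the density statement the paper takes as input; proving it requires more than the manipulations you indicate.
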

\begin{proof} Proof here is adopted from \cite{Hirano}.
Let $w \in X_+$ and $\{\psi_k\}$ be sequence in $C^{\infty}_{c}(\Om)$ such that $\psi_k$ is non negative and converges strongly to $w$ in $X_0$. Define $z_k = \min \{\psi_k, w\}$,  then $z_k \rightarrow w$ converges strongly to $w$ in $X_0$. Now, we set $w_1 = z_{r_1}$ where $r_1>0$ is such that $\|z_{r_1}-w\| \leq 1$. Then $\max \{w_1, z_m\} \rightarrow w$ strongly as $m \rightarrow \infty$, thus we can find $r_2>0$ such that $\|\max\{w_1,z_{r_2}\}-w\| \leq 1/2$. We set $w_2=\max\{w_1,z_{r_2}\}$ and get $\max \{w_2, z_m\} \rightarrow w$ strongly as $m \rightarrow \infty$. Consequently, by induction we set, $w_{k+1}= \max\{w_k,z_{r_{k+1}}\}$ to obtain the desi sequence, since we can see that $w_k \in X_0$  has compact support, for each $k$ and $\|\max\{w_k,z_{r_{k+1}}\}-w\| \leq 1/(k+1)$ which says that $\{w_k\}$ converges strongly to $w$ in $X_0$ as $k \rightarrow \infty$. \QED
\end{proof}

\noi Let $\phi_1>0$ be the eigenfunction of $(-\De_{p})^s$ corresponding to the smallest eigenvalue $\la_1$. This is obtained as minimizer of the minimization problem
\[\la_1= \min \{\|u\|\; :\; u\in X_0, \;\; \|u\|_{L^p(\Om)}=1\}.\]
In (see \cite{s3,EPL}) it was shown that this minimizer is achieved by unique positive and bounded function $\phi_1$. Moreover $(\la_1, \phi_1)$ is  the solution of the eigenvalue problem
\begin{equation*}
 \quad (-\De_{p})^s u = \la_1 |u|^{p-2}u, \; u>0\; \text{in}\;
\Om,\quad   u = 0 \; \mbox{on}\; \mb R^n \setminus\Om.
\end{equation*}
We assume $\|\phi_1\|_{L^\infty} = 1$. With these preliminaries, we state our main results.
\medskip

\noi For each $u\in X_{+,q}$ we define the fiber map $\phi_u:\mb R^+ \rightarrow \mb R$ by $\phi_u(t)=I_\la(tu)$. Then we prove
\begin{Theorem} \label{thm3.2}
Assume $0<q \leq 1$. In case $q=1$, assume also $X_{+,1} \neq \emptyset$. Let $\Lambda_1$ be a constant defined by
 $\Lambda_1 = \sup \left\{\la >0:\text{ for each} ~ u\in X_{+,q}\setminus\{0\}, ~\phi_u(t)~  \text{has two critical points in}  ~(0, \infty) \right\}.$
Then $\Lambda_1 >0$.
\end{Theorem}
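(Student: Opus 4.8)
The plan is to analyze the fiber map $\phi_u(t) = I_\la(tu)$ explicitly and show that for small $\la$ it has exactly two critical points, uniformly over all $u \in X_{+,q}\setminus\{0\}$ after normalizing $\|u\|=1$. First I would write out, for $0<q<1$,
\[
\phi_u(t) = \frac{t^p}{p}\|u\|^p - \frac{\la t^{1-q}}{1-q}\int_\Om |u|^{1-q}\,dx - \frac{t^{\alpha+1}}{\alpha+1}\int_\Om |u|^{\alpha+1}\,dx,
\]
(with the obvious logarithmic modification when $q=1$, where the middle term becomes $-\la t\int_\Om u\,dx - \la\ln t\,|\Om|$ up to constants), and differentiate:
\[
\phi_u'(t) = t^{-q}\Big( t^{p-1+q}\|u\|^p - \la\int_\Om |u|^{1-q}\,dx - t^{\alpha+q}\int_\Om |u|^{\alpha+1}\,dx\Big).
\]
Critical points of $\phi_u$ on $(0,\infty)$ thus correspond to zeros of $\psi_u(t) := t^{p-1+q}\|u\|^p - t^{\alpha+q}\int_\Om |u|^{\alpha+1}\,dx = \la\int_\Om |u|^{1-q}\,dx$. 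By homogeneity we may assume $\|u\|=1$; then I would study the function $m_u(t) = t^{p-1+q} - A_u t^{\alpha+q}$ where $A_u = \int_\Om |u|^{\alpha+1}\,dx \le C_{\alpha+1}$ (using the bound from \eqref{eq00}, since $\alpha+1 \le p^*_s$ and $\|u\|=1$). Since $\alpha+q > p-1+q$ (as $\alpha \ge p-1$, indeed $\alpha > p-1$ is forced by criticality/the problem — I should check the degenerate case $\alpha=p-1$ separately, but there the result is easy or the hypothesis rules it out), $m_u$ increases from $0$, attains a unique interior maximum at $t_u = \big(\frac{p-1+q}{A_u(\alpha+q)}\big)^{1/(\alpha-p+1)}$, then decreases to $-\infty$. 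Hence the horizontal line at height $\la\int_\Om|u|^{1-q}\,dx$ meets the graph in exactly two points precisely when $0 < \la\int_\Om |u|^{1-q}\,dx < m_u(t_u)$, and the left inequality is automatic since $\la>0$ and $\int_\Om|u|^{1-q}>0$.

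The key remaining step is a uniform lower bound on the gap: I need $\inf_{\|u\|=1}\big( m_u(t_u) - \la\int_\Om |u|^{1-q}\,dx\big) > 0$ for $\la$ small, equivalently $\la < \inf_{\|u\|=1} \frac{m_u(t_u)}{\int_\Om |u|^{1-q}\,dx}$. Here $m_u(t_u) = c\, A_u^{-(p-1+q)/(\alpha-p+1)}$ for an explicit positive constant $c = c(p,q,\alpha)$, and $A_u = |u|_{\alpha+1}^{\alpha+1} \le C_{\alpha+1}$, so $m_u(t_u) \ge c\, C_{\alpha+1}^{-(p-1+q)/(\alpha-p+1)} =: c_0 > 0$ uniformly. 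Meanwhile $\int_\Om |u|^{1-q}\,dx \le C_{1-q}$ uniformly (again by \eqref{eq00}, with $1-q < p \le p^*_s$). Therefore $\Lambda_1 \ge c_0 / C_{1-q} > 0$, and in particular $\Lambda_1>0$. For the case $q=1$ I would redo the same computation: $\psi_u(t) = t^{p}\|u\|^p - t^{\alpha+1}\int_\Om|u|^{\alpha+1}\,dx = \la\,|\Om|$, which still has the shape "up then down to $-\infty$" since the bracket $t^p - A_u t^{\alpha+1}$ (after normalizing $\|u\|=1$) has a unique positive maximum, so the same two-critical-point analysis and uniform bound go through with $\int_\Om |u|^{1-q}\,dx$ replaced by $|\Om|$.

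The main obstacle I anticipate is the uniformity: making sure the bound $m_u(t_u)\ge c_0$ is genuinely independent of $u$, which hinges on the compactness/boundedness encoded in the constants $C_\beta$ of \eqref{eq00} for the exponents $\beta = \alpha+1$ and $\beta = 1-q$ — this is exactly why the hypothesis $\alpha+1 \le p^*_s$ matters (for $\alpha+1 > p^*_s$ the constant $C_{\alpha+1}$ would be infinite and the argument collapses). A secondary technical point is verifying that $\phi_u$ has no other critical points (it does not, since $\psi_u$ is strictly monotone on each side of $t_u$) and that $t=0$ and $t=\infty$ behavior ($\phi_u(0^+) = -\infty$ for $0<q<1$, $\phi_u(t)\to-\infty$ as $t\to\infty$) confirms the two interior critical points are a local max followed by... — actually with $\phi_u(0^+)=-\infty$ and $\phi_u(\infty)=-\infty$, the first critical point is a local maximum and the second is also constrained by the sign pattern of $\phi_u'$; I would just record the sign of $\phi_u'$ directly from the two sign changes of $\psi_u - \la\int|u|^{1-q}$ rather than appeal to the limits, which keeps the argument clean.
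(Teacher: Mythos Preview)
Your approach is correct and essentially identical to the paper's proof (carried out there as Lemma~\ref{lem3.2}): the paper introduces the same auxiliary function $m_u(t) = t^{p-1+q}\|u\|^p - t^{\alpha+q}\int_\Om|u|^{\alpha+1}\,dx$, locates its maximum $t_{\max}$, and shows $m_u(t_{\max}) - \la\int_\Om|u|^{1-q}\,dx > 0$ uniformly in $u$ via the constants $C_{\alpha+1}$ and $C_{1-q}$ from \eqref{eq00}, exactly as you do. Two harmless slips to clean up: for $0<q<1$ one has $\phi_u(0^+)=0$, not $-\infty$, and in the $q=1$ case the middle term is $-\la|\Om|\ln t$ plus a $t$-independent constant (there is no $-\la t\int_\Om u$ term); neither affects your argument since you correctly determine the two critical points from the sign changes of $m_u(t) - \la\int_\Om|u|^{1-q}\,dx$.
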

\begin{Theorem}\label{thm2.4}
For all $\la \in (0, \Lambda_1)$, $(P_\la)$ has at least two distinct solutions in $X_{+,q}$ when $\alpha < p^*_s-1$ and at least one solution in the critical case $\alpha = p^*_s-1$.
\end{Theorem}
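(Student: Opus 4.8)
Fix $\la\in(0,\Lambda_1)$ throughout. By Theorem~\ref{thm3.2}, for every $u\in X_{+,q}\setminus\{0\}$ the fibering map $\phi_u(t)=I_\la(tu)$ has exactly two critical points $0<t_1(u)<t_2(u)$, with $\phi_u''(t_1(u))>0$ and $\phi_u''(t_2(u))<0$. The plan is to run the Nehari-manifold scheme of \cite{Hirano} adapted to the singular functional $I_\la$: put
\[ \mc N_\la=\{u\in X_{+,q}:\phi_u'(1)=0\},\qquad \mc N_\la^{\pm}=\{u\in\mc N_\la:\pm\,\phi_u''(1)>0\}, \]
so that $\mc N_\la=\mc N_\la^+\cup\mc N_\la^-$ is a disjoint union with $t_1(u)u\in\mc N_\la^+$ and $t_2(u)u\in\mc N_\la^-$, and set $\theta^{\pm}=\inf_{\mc N_\la^{\pm}}I_\la$. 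The first solution will be obtained as a minimizer of $I_\la$ on $\mc N_\la^+$ and, when $\al<p_s^*-1$, the second as a minimizer on $\mc N_\la^-$; since the two components are disjoint the solutions are automatically distinct, and positivity of each will follow, via $(-\De_p)^su\ge0$, from the strong maximum principle for the fractional $p$-Laplacian.

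\emph{Existence of the first minimizer.} On $\mc N_\la$ one has $\|u\|^p=\la\int_\Om u^{1-q}\,dx+\int_\Om u^{\al+1}\,dx$ (with $\int_\Om u^{1-q}$ read as $|\Om|$ when $q=1$); substituting this into $I_\la$ and using Hölder's and Young's inequalities shows that $I_\la$ is bounded below on $\mc N_\la$ and that every minimizing sequence for $\theta^+$ is bounded in $X_0$. Taking such a sequence with $u_k\rightharpoonup u_\la$ in $X_0$, $u_k\to u_\la$ in $L^r(\Om)$ for $r<p_s^*$ and a.e. in $\Om$, I would combine weak lower semicontinuity of $u\mapsto\|u\|^p$, Fatou's lemma for $-\la\int_\Om G_q(u_k)$, and strong $L^{\al+1}$-convergence (which uses $\al+1<p_s^*$) to get $\phi_{u_\la}(t)\le\liminf_k\phi_{u_k}(t)$ for each $t>0$. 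The sign structure of $I_\la$ forces $u_\la\not\equiv0$ (in particular $\theta^+<0$ when $0<q<1$, since $\phi_u$ decreases on $(0,t_1(u))$), and then a standard fibering/lower-semicontinuity argument shows that $\widetilde u:=t_1(u_\la)u_\la\in\mc N_\la^+$ realizes $\theta^+$ and that $u_k\to u_\la$ strongly in $X_0$; rename $\widetilde u$ as $u_\la$.

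\emph{The minimizer is a weak solution.} This is the step I expect to be the main obstacle, because $I_\la$ is only lower semicontinuous — not $C^1$ — so Lagrange multipliers on $\mc N_\la^+$ are unavailable. For $0\le\varphi\in X_0$ and $\e>0$, the function $u_\la+\e\varphi$ lies in $X_{+,q}$, hence there is a unique $s_\e>0$ with $s_\e(u_\la+\e\varphi)\in\mc N_\la^+$; since $\phi_{u_\la}''(1)\ne0$, the implicit function theorem makes $\e\mapsto s_\e$ differentiable near $0$ with $s_0=1$. Inserting this into $I_\la\big(s_\e(u_\la+\e\varphi)\big)\ge I_\la(u_\la)$, dividing by $\e$, letting $\e\to0^+$, and using Fatou to bound $\liminf_{\e}\e^{-1}\int_\Om\big(G_q(u_\la+\e\varphi)-G_q(u_\la)\big)$ from below, one obtains $u_\la^{-q}\varphi\in L^1(\Om)$ together with
\[ \int_Q\frac{|u_\la(x)-u_\la(y)|^{p-2}(u_\la(x)-u_\la(y))(\varphi(x)-\varphi(y))}{|x-y|^{n+sp}}\,dxdy-\la\int_\Om u_\la^{-q}\varphi\,dx-\int_\Om u_\la^{\al}\varphi\,dx\ge0 \]
for all $\varphi\in X_+$. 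Combining this one-sided inequality with the Nehari identity $\langle(-\De_p)^su_\la,u_\la\rangle=\la\int_\Om u_\la^{1-q}\,dx+\int_\Om u_\la^{\al+1}\,dx$, and approximating $u_\la$ by the monotone, compactly supported sequence furnished by Lemma~\ref{lem2.1} so as to legitimize two-sided variations of $u_\la$, upgrades the inequality to equality, i.e.\ to the weak formulation of $(P_\la)$ for all $\psi\in C_c^\infty(\Om)$; the strong maximum principle then gives $u_\la>0$ in $\Om$.

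\emph{Second solution and the critical case.} For $\al<p_s^*-1$ I would repeat the whole scheme on $\mc N_\la^-$: here the \emph{compact} embedding $X_0\hookrightarrow L^{\al+1}(\Om)$ — exactly the one that fails at $\al+1=p_s^*$ — makes $\theta^-$ finite, permits passing to the limit in $\int_\Om|u_k|^{\al+1}$, and, together with the bound $\inf_{\mc N_\la^-}\|u\|>0$ (obtained by estimating $t_2(u)$ from below via the embedding constant $C_{\al+1}$), rules out vanishing of a minimizing sequence; hence $\theta^-$ is attained at some $v_\la\in\mc N_\la^-$, which the same projection-and-inequality argument (now projecting onto $\mc N_\la^-$) shows to be a positive weak solution, with $v_\la\ne u_\la$ since $\mc N_\la^+\cap\mc N_\la^-=\emptyset$. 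When $\al=p_s^*-1$ the $\mc N_\la^-$ argument fails, because its minimizing sequence may concentrate; but the $\mc N_\la^+$ argument survives once one verifies $\theta^+<\tfrac{s}{n}S^{n/(sp)}$ (immediate from $\theta^+<0$ when $0<q<1$, and via a sharper energy estimate when $q=1$), which prevents loss of compactness in the minimizing sequence for $\theta^+$ and yields the single solution $u_\la$. The two genuinely delicate points are therefore the passage from the variational inequality to an actual weak solution and the threshold estimate $\theta^+<\tfrac{s}{n}S^{n/(sp)}$ in the critical case.
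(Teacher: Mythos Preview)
Your overall Nehari-manifold strategy is exactly the one the paper uses: minimize $I_\la$ separately on $\mc N_\la^+$ and $\mc N_\la^-$, show the minimizers solve $(P_\la)$, and observe that the subcritical case permits compactness on $\mc N_\la^-$ while the critical case does not. The compactness discussion and the derivation of the one-sided inequality are essentially as in the paper.

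There is, however, a genuine gap in the step ``variational inequality $\Rightarrow$ weak solution''. You obtain
\[
\int_Q \frac{|u_\la(x)-u_\la(y)|^{p-2}(u_\la(x)-u_\la(y))(\varphi(x)-\varphi(y))}{|x-y|^{n+sp}}\,dxdy \;\ge\; \int_\Om(\la u_\la^{-q}+u_\la^{\al})\varphi\,dx
\]
for all $\varphi\in X_+$, and then propose to upgrade this to an equation by ``combining with the Nehari identity and approximating $u_\la$ via Lemma~\ref{lem2.1}''. That mechanism is not spelled out, and as written it does not close: to test with a $\psi\in C_c^\infty(\Om)$ of arbitrary sign you need $u_\la+\e\psi\ge0$ for small $\e$, i.e.\ a \emph{uniform} lower bound $u_\la\ge\beta>0$ on $\mathrm{supp}\,\psi$, not merely $u_\la>0$ a.e. Your subsequent appeal to the strong maximum principle is circular, since that principle applies only after the equation is established. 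The paper supplies the missing ingredient through a comparison lemma (Lemma~\ref{le05}): one chooses $\eta>0$ small so that $\phi=\eta\phi_1$ is a subsolution of $(P_\la)$ with $\phi^{\alpha+q}\le\la q/\alpha$, and then proves directly from the one-sided inequality that $u_\la\ge\phi$ and $v_\la\ge\phi$ in $\Om$. This pointwise lower bound is what allows the two-sided perturbation $u_\la\mapsto u_\la+\e\psi$ in Proposition~\ref{prp4.2} (and the analogous step for $v_\la$), and it is not recoverable from the Nehari identity alone.

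A smaller point: in the critical case $\alpha=p_s^*-1$ the paper does not simply check $\theta^+<\tfrac{s}{n}S^{n/(sp)}$. Writing $\|w_k\|^p\to c^p$ and $|w_k|_{p_s^*}^{p_s^*}\to d^{p_s^*}$ with $w_k=u_k-u_\la$, it uses Brezis--Lieb together with the fibering structure to run a three-case contradiction argument (according to whether $t_2<1$, or $t_2\ge1$ with $c^p/p-d^{p_s^*}/p_s^*<0$, or $t_2\ge1$ with $c^p/p-d^{p_s^*}/p_s^*\ge0$), the second case being excluded via the a priori bound $\sup\{\|u\|^p:u\in\mc N_\la^+\}\le\big(\tfrac{p-1+q}{q+p_s^*-1}\big)^{p/(p_s^*-p)}S^{p_s^*/(p_s^*-p)}$. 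Your threshold heuristic is morally right but does not by itself replace this analysis.
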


\begin{Definition}
We say $u \in X_0$ a weak sub solution of $(P_\la)$ if $u >0$ in $\Om$ and
$$ \int_Q \frac{|u(x)-u(y)|^{p-2}(u(x)-u(y))(\psi(x)-\psi(y))}{|x-y|^{n+sp}} ~dxdy \leq  \int_\Om \left( \la u^{-q}  +  u^{\alpha}\right) \psi ~dx = 0 $$
for all $0 \leq \psi \in C^{\infty}_c(\Om)$. Similarly $u \in X_0$ is said to be a weak super solution to $(P_\la)$ if in the above the reverse inequalities hold.
\end{Definition}

Next we study that the existence of solution with the parameter in maximal interval. For this we minimize the functional over the convex set $\{u\in X_{+,q}: \uline{u}\le u\le \oline{u}\}$ where $\uline{u}$ and $\oline{u}$ are sub and super solutions respectively. Using truncation techniques as in \cite{yh}, we show that the minimizer is a solution.
\begin{Theorem}\label{thm2.5} Let $\alpha\le p_s^*-1$ and $0<q\le 1$. Then there exists $\La>0$
such that $(P_\la)$ has a solution for all $\la\in (0,\La)$ and no solution for $\la>\La$.
\end{Theorem}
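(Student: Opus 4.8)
The plan is to let $\Lambda$ be \emph{defined} as the supremum of the admissible parameters,
\[
\Lambda:=\sup\{\lambda>0:\ (P_\lambda)\ \text{has a weak solution in}\ X_{+,q}\},
\]
so that "no solution for $\lambda>\Lambda$" holds tautologically, while $\Lambda\ge\Lambda_1>0$ is immediate from Theorems \ref{thm3.2} and \ref{thm2.4}. The substance of the statement is therefore the \emph{monotonicity} of solvability: if $(P_\lambda)$ is solvable for some $\lambda>0$, then $(P_\mu)$ is solvable for every $\mu\in(0,\lambda)$; this shows the solvable set contains the whole interval $(0,\Lambda)$.

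To prove the monotonicity I fix $\mu\in(0,\lambda)$ and a solution $u_\lambda\in X_{+,q}$ of $(P_\lambda)$. Since $\mu t^{-q}+t^{\alpha}\le\lambda t^{-q}+t^{\alpha}$ for $t>0$, the function $\overline u:=u_\lambda$ is a weak supersolution of $(P_\mu)$. For a subsolution I take $\underline u:=\epsilon\phi_1$ with $\epsilon>0$ so small that $\epsilon^{p-1+q}\lambda_1\le\mu$: using $(-\Delta_p)^s(\epsilon\phi_1)=\epsilon^{p-1}\lambda_1\phi_1^{p-1}$, the normalization $\|\phi_1\|_\infty=1$ and $0<q\le1$ (so $\phi_1^{p-1+q}\le1$), one gets $\epsilon^{p-1}\lambda_1\phi_1^{p-1}\le\mu(\epsilon\phi_1)^{-q}\le\mu(\epsilon\phi_1)^{-q}+(\epsilon\phi_1)^{\alpha}$, i.e. $\underline u$ is a weak subsolution of $(P_\mu)$ and, in particular, of the purely singular equation $(-\Delta_p)^su=\mu u^{-q}$, of which $u_\lambda$ is a supersolution (dropping $u_\lambda^{\alpha}\ge0$); since $u\mapsto\mu u^{-q}$ is decreasing, the weak comparison principle for $(-\Delta_p)^s$ gives $\underline u\le\overline u$ a.e. in $\Omega$. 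With this ordered pair, I minimize $I_\mu$ over the closed convex set $M=\{u\in X_{+,q}:\underline u\le u\le\overline u\ \text{a.e. in}\ \Omega\}$: $M$ is nonempty (it contains $\underline u$, and $G_q(u)\in L^1(\Omega)$ for $u\in M$ because $G_q(\underline u)\le G_q(u)\le G_q(\overline u)$ with $G_q(\underline u),G_q(\overline u)\in L^1(\Omega)$ — for $q=1$ this is where the hypothesis $X_{+,1}\neq\emptyset$ / the regularity of $\partial\Omega$ enters), and $I_\mu(u)\ge\frac1p\|u\|^p-C$ on $M$ since $\int_\Omega|u|^{\alpha+1}\le\int_\Omega\overline u^{\alpha+1}$ and $-\mu\int_\Omega G_q(u)$ is squeezed between two integrable bounds; hence $I_\mu$ is coercive and sequentially weakly lower semicontinuous on $M$ and attains its infimum at some $u_\mu\in M$ with $u_\mu\ge\underline u>0$ in $\Omega$, so $u_\mu\in X_{+,q}\setminus\{0\}$.

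The decisive step is to upgrade the constrained minimizer $u_\mu$ to an \emph{unconstrained} weak solution of $(P_\mu)$, which I would do by the truncation technique of \cite{yh}. Given $\phi\in C^\infty_c(\Omega)$ and $t>0$ small, set $v_t:=\min\{\overline u,\max\{\underline u,u_\mu+t\phi\}\}=u_\mu+t\phi-\phi^t+\phi_t\in M$ with $\phi^t=(u_\mu+t\phi-\overline u)^+\ge0$, $\phi_t=(\underline u-u_\mu-t\phi)^+\ge0$. Because $u_\mu\ge\underline u>0$ on compact subsets of $\Omega$, $I_\mu$ is G\^ateaux differentiable at $u_\mu$ along $v_t-u_\mu$, and minimality gives $0\le\langle I_\mu'(u_\mu),v_t-u_\mu\rangle=t\langle I_\mu'(u_\mu),\phi\rangle-\langle I_\mu'(u_\mu),\phi^t\rangle+\langle I_\mu'(u_\mu),\phi_t\rangle$. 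Using that $\overline u$ is a supersolution and $\underline u$ a subsolution of $(P_\mu)$ and the monotonicity of $(-\Delta_p)^s$, one shows $\langle I_\mu'(u_\mu),\phi^t\rangle\ge-o(t)$ and $\langle I_\mu'(u_\mu),\phi_t\rangle\le o(t)$ as $t\to0^+$; dividing by $t$ and letting $t\to0^+$ gives $\langle I_\mu'(u_\mu),\phi\rangle\ge0$ for all $\phi\in C^\infty_c(\Omega)$, and replacing $\phi$ by $-\phi$ yields equality, so $u_\mu$ solves $(P_\mu)$ weakly. I expect this step to be the main obstacle: for the nonlocal operator the expression $\langle(-\Delta_p)^su_\mu-(-\Delta_p)^s\overline u,\phi^t\rangle$ does not localize to the small set $\{u_\mu+t\phi>\overline u\}$ the way it does for the local $p$-Laplacian, so the double integral over $Q$ has to be split and the sign of $\phi^t$ exploited carefully; a secondary technical point is the comparison $\underline u\le\overline u$ and the differentiability of $I_\mu$ near the singularity, both of which rest on the lower bound $u_\lambda\ge c\,\phi_1$ and a comparison principle for $(-\Delta_p)^s$.

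Finally, I would record that $\Lambda<\infty$ whenever $\alpha>p-1$ (in particular in the critical case $\alpha=p^*_s-1$, since $n>sp$ forces $p^*_s-1>p-1$): testing $(P_\lambda)$ with (a suitable approximation of) $\phi_1^p/u^{p-1}$ and using the fractional Picone inequality yields $\int_\Omega\bigl(\lambda u^{-q-p+1}+u^{\alpha-p+1}\bigr)\phi_1^p\,dx\le\lambda_1\int_\Omega\phi_1^p\,dx$, while $\lambda t^{-q-p+1}+t^{\alpha-p+1}\ge c_0\,\lambda^{(\alpha-p+1)/(\alpha+q)}$ for all $t>0$, which is impossible once $\lambda$ is large. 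When $\alpha\le p-1$ the functional $I_\lambda$ is coercive for every $\lambda>0$, the solvable set is all of $(0,\infty)$, $\Lambda=\infty$, and the non-existence clause is vacuously true.
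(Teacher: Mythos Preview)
Your overall architecture coincides with the paper's: define $\Lambda$ as the supremum of admissible $\lambda$, prove downward monotonicity of solvability by taking a solution at a larger parameter as supersolution and a small multiple of $\phi_1$ as subsolution, and then minimize $I_\mu$ over the order interval $M=\{\underline u\le u\le\overline u\}$ using the truncation device of \cite{yh}; the paper carries out exactly this in Lemma~\ref{lem7.2} and Proposition~\ref{lem7.3}, and your anticipation that the nonlocal term forces a splitting of $Q$ into the four products of $\{\underline u\le u\}$ and its complement is precisely what the paper does there (and earlier in Lemma~\ref{le05}).

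Two sub-steps differ. For the ordering $\underline u\le\overline u$, the paper simply asserts one can choose $t>0$ with $t\phi_1\le\overline u$, while you argue via a comparison principle for the purely singular problem $(-\Delta_p)^su=\mu u^{-q}$; your route is more self-contained and avoids needing an a priori lower barrier for an \emph{arbitrary} solution $u_{\lambda_0}$. For $\Lambda<\infty$, the paper does not use Picone: it observes that for $\lambda$ large one has $\lambda t^{-q}+t^{\alpha}>(\lambda_1+\e)t^{p-1}$ for all $t>0$, so a solution $u_\lambda$ is a supersolution of the eigenvalue problem at level $\lambda_1+\e$, and a monotone iteration between $r\phi_1$ and $u_\lambda$ produces a positive solution there, contradicting the simplicity/isolation of $\lambda_1$ (Lemma~\ref{lem4.4}). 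Your fractional Picone argument is a valid alternative in the superlinear regime $\alpha>p-1$ and is somewhat more direct, though it requires justifying $\phi_1^p/u^{p-1}$ as an admissible test function. One caveat: your final remark that for $\alpha\le p-1$ the functional is coercive for every $\lambda$ is not quite right at the borderline $\alpha=p-1$, where coercivity depends on $\lambda_1$; but this case is outside the paper's standing hypothesis $\alpha>p-1$ (implicit throughout Section~3) and does not affect the theorem as stated.
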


\section{Nehari manifold and fibering maps}
We denote $I_{\la} = I$ for simplicity now. One can easily verify that the energy functional $I$ is not bounded below on the space $X_0$. We will show that it is bounded on the manifold associated to the functional $I$. In this section, we study the structure of this manifold.  We define
\[ \mc N_{\la} = \{ u \in X_{+,q} | \left\langle I^{\prime}(u),u\right\rangle = 0 \}. \]
\begin{Theorem}
$I$ is coercive and bounded below on $\mc N_{\la}$.
\end{Theorem}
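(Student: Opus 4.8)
The plan is to show that on $\mc N_\la$ the functional $I$ can be rewritten so that the leading (positive, coercive) term dominates the lower-order terms. First I would use the Nehari constraint $\langle I'(u),u\rangle = 0$, i.e.
\[
\|u\|^p - \la\int_\Om |u|^{1-q}\,dx - \int_\Om |u|^{\alpha+1}\,dx = 0
\]
(with the obvious modification $\int_\Om u^{-q}\cdot u = \int_\Om u^{1-q}$, and the logarithmic case handled separately), to eliminate whichever term is most convenient. Substituting $\int_\Om |u|^{\alpha+1} = \|u\|^p - \la\int_\Om |u|^{1-q}$ into $I(u)$ gives
\[
I(u) = \Bigl(\frac1p - \frac1{\alpha+1}\Bigr)\|u\|^p - \la\Bigl(1 - \frac1{\alpha+1}\Bigr)\int_\Om G_q(u)\,dx - \frac{\la}{\alpha+1}\int_\Om |u|^{1-q}\,dx \quad\text{(for }0<q<1),
\]
so that the $\|u\|^p$ coefficient is strictly positive since $\alpha+1 > p$ (here $p-1<\alpha$, or at worst one uses $\alpha \le p^*_s - 1$ together with $p < p^*_s$).

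Next I would bound the remaining integrals by powers of $\|u\|$ strictly below $p$. For $0<q<1$ we have $G_q(u) = \tfrac{1}{1-q}|u|^{1-q}$ and, by the embedding $X_0 \hookrightarrow L^{1-q}(\Om)$ together with the constant $C_{1-q}$ from \eqref{eq00}, $\int_\Om |u|^{1-q}\,dx \le C_{1-q}\|u\|^{1-q}$. Since $1-q < 1 < p$, the map $t \mapsto a t^p - b t^{1-q}$ is bounded below on $[0,\infty)$ and tends to $+\infty$, which yields both coercivity and boundedness below of $I$ on $\mc N_\la$. In the case $q=1$ one uses instead $G_1(u) = \ln|u|$, and the estimate $\ln|u| \le \tfrac{1}{\theta}(|u|^\theta - 1) \le \tfrac{1}{\theta}|u|^\theta$ for any small $\theta \in (0,1)$, so that $\int_\Om \ln|u|\,dx \le \tfrac{1}{\theta}C_\theta\|u\|^\theta$ with $\theta < p$; the same elementary one-variable argument then closes the case.

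The main obstacle is not the algebra but making sure the sign bookkeeping is correct in the critical case $\alpha = p^*_s - 1$ and the logarithmic case $q=1$: in the former one must verify $\tfrac1p - \tfrac1{\alpha+1} > 0$, which holds because $\alpha + 1 = p^*_s > p$; in the latter $\int_\Om \ln|u|$ is only bounded above (it may be $-\infty$), but that is exactly the favorable direction for coercivity, since $G_1$ enters $I$ with a minus sign, so the term $-\la\int_\Om G_1(u)$ is bounded below. One should also note that every $u \in \mc N_\la \subset X_{+,q}$ has $G_q(u) \in L^1(\Om)$ by definition, so all integrals above are finite and the manipulations are legitimate. Collecting these estimates, $I(u) \ge a\|u\|^p - b\|u\|^{\gamma} - c$ on $\mc N_\la$ for some constants $a>0$, $b,c \ge 0$ and $\gamma \in (0,p)$, which proves the claim.
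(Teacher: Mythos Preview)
Your approach is correct and essentially identical to the paper's: use the Nehari constraint to eliminate $\int_\Om |u|^{\alpha+1}$, observe that the coefficient of $\|u\|^p$ is strictly positive, and bound the remaining lower-order integral by $C\|u\|^{\gamma}$ with $\gamma<p$ via the embedding. Your displayed formula has a sign slip --- the substitution actually gives $I(u)=\bigl(\tfrac1p-\tfrac1{\alpha+1}\bigr)\|u\|^p-\la\bigl(\tfrac1{1-q}-\tfrac1{\alpha+1}\bigr)\int_\Om|u|^{1-q}\,dx$ for $0<q<1$, which is exactly what the paper writes --- but this does not affect the argument; for $q=1$ the paper uses the simpler bound $\ln|u|\le|u|$ in place of your $\ln|u|\le |u|^\theta/\theta$.
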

\begin{proof}
In case of $0<q<1$, since $u\in \mc N_\la$, using the embedding of $X_0$ in $L^{1-q}(\Om)$, we get
\begin{equation*}
\begin{split}
I(u) & = \left(\frac{1}{p}-\frac{1}{\alpha+1}\right)\|u\|^p- \la \left(\frac{1}{1-q}-\frac{1}{\alpha+1}\right)\int_{\Om} |u|^{1-q}dx\\
& \geq c_1 \|u\|^p - c_2 \|u\|^{1-q}
\end{split}
\end{equation*}
for some constants $c_1$ and $c_2$. This says that $I$ is coercive and bounded below on $\mc N_{\la}$.\\
\noi In case of $q=1$, using the inequality $\ln|u| \leq |u|$  and embedding results for $X_0$, we can similarly get $I$ as bounded below.\QED
\end{proof}

\noi From the definition of fiber map $\phi_u$, we have
$$\phi_u(t)=
\begin{cases} \ds
		\frac{t^p}{p} \|u\|^p - \frac{t^{1-q}}{1-q} \int_{\Om} |u|^{1-q} dx - \frac{t^{\alpha+1}}{\alpha+1} \int_{\Om} |u|^{\alpha+1} dx & \; \text{if}\; ~  0<q<1 \\
		\ds \frac{t^p}{p} \|u\|^p - \la \int_{\Om} \ln(t|u|) dx - \frac{t^{\alpha+1}}{\alpha+1} \int_{\Om} |u|^{\alpha+1} dx  & \; \text{if}\;~ q=1.
	\end{cases}$$
which gives
\[ \phi^{\prime}_u(t) = t^{p-1} \|u\|^p -\la t^{-q} \int_{\Om} |u|^{1-q} dx - t^{\alpha} \int_{\Om}|u|^{\alpha+1;} dx \]
\[ \phi^{\prime \prime}_u(t) = (p-1)t^{p-2}\|u\|^p + q \la t^{-q-1} \int_{\Om}|u|^{1-q} dx - \alpha t^{\alpha-1} \int_{\Om}|u|^{\alpha+1} dx .\]
It is easy to see that the points in $\mc N_{\la}$ are corresponding to critical points of $\phi_{u}$ at $t=1$. So, it is natural to divide $\mc N_{\la}$ into three sets corresponding to local minima, local maxima and points of inflexion. Therefore, we define
\begin{align*}
\mc N_{\la}^{+} = & \{ u \in \mc N_{\la}|~ \phi^{\prime}_u (1) = 0,~ \phi^{\prime \prime}_u(1) > 0\}=  \{ t_0u \in \mc N_{\la} |\; t_0 > 0,~ \phi^{\prime}_u (t_0) = 0,~ \phi^{\prime \prime}_u(t_0) > 0\}\\
\mc N_{\la}^{-} = & \{ u \in \mc N_{\la} |~ \phi^{\prime}_u (1) = 0,~ \phi^{\prime \prime}_u(1) <0\}=  \{ t_0u \in \mc N_{\la} |\; t_0 > 0, ~ \phi^{\prime}_u (t_0) = 0, ~\phi^{\prime \prime}_u(t_0) < 0\}
\end{align*}
and, $ \mc N_{\lambda}^{0}= \{ u \in \mc N_{\la} | \phi^{\prime}_{u}(1)=0, \phi^{\prime \prime}_{u}(1)=0 \}. $

\begin{Lemma}\label{lem3.2}
There exists $\la_*>0$ such that for each $u\in X_{+,q}\backslash\{0\}$, there is unique $t_{\max}, t_1$ and $t_2$ with property that $t_< t_{max}<t_2$, $t_1 u\in \mc N_{\la}^{+}$ and $ t_2 u\in \mc N_{\la}^{-}$ and  for all $\la \in (0,\la_*)$.
\end{Lemma}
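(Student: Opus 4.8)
\medskip

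\noi The plan is to reduce the statement to an elementary one–variable analysis of the function obtained from $\phi_u'$ after clearing the singular power, and then to make the resulting threshold uniform in $u$ by a scaling argument combined with the Sobolev bound \eqref{eq00}. Throughout recall that $p-1<\alpha\le p^*_s-1$ (this is needed to have two positive critical points; for $\alpha\le p-1$ the map $\phi_u$ has at most one). Fix $u\in X_{+,q}\setminus\{0\}$ and set $c_u:=\int_\Om|u|^{1-q}\,dx$ when $0<q<1$ and $c_u:=|\Om|$ when $q=1$, so that in either case, for $t>0$,
\[
\phi_u'(t)=t^{-q}\bigl(\psi_u(t)-\la c_u\bigr),\qquad \psi_u(t):=t^{p-1+q}\|u\|^p-t^{\alpha+q}\int_\Om|u|^{\alpha+1}\,dx .
\]
Since $t^{-q}>0$ and $c_u>0$, the points $tu\in\mc N_\la$ are exactly the zeros of $t\mapsto\psi_u(t)-\la c_u$, and $\operatorname{sign}\phi_u'(t)=\operatorname{sign}\bigl(\psi_u(t)-\la c_u\bigr)$.

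\noi Next I would analyse $\psi_u$ on $(0,\infty)$: one has $\psi_u(0)=0$, $\psi_u>0$ near $0$, $\psi_u(t)\to-\infty$ as $t\to\infty$, and (because $\alpha-p+1>0$) $\psi_u'$ has the single positive zero
\[
t_{\max}=t_{\max}(u)=\left(\frac{(p-1+q)\|u\|^p}{(\alpha+q)\int_\Om|u|^{\alpha+1}\,dx}\right)^{\frac{1}{\alpha-p+1}},
\]
with $\psi_u$ strictly increasing on $(0,t_{\max})$ and strictly decreasing on $(t_{\max},\infty)$; thus $t_{\max}$ is the strict global maximiser. Consequently, provided $\la c_u<\psi_u(t_{\max})$, the equation $\psi_u(t)=\la c_u$ has exactly two solutions, a unique $t_1\in(0,t_{\max})$ and a unique $t_2\in(t_{\max},\infty)$, by the intermediate value theorem together with strict monotonicity on each side (using $\psi_u(0)=0<\la c_u$ and $\psi_u(+\infty)=-\infty<\la c_u$); these are the only critical points of $\phi_u$. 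Since $\phi_u'(t_i)=0$, differentiating the identity for $\phi_u'$ gives $\phi_u''(t_i)=t_i^{-q}\psi_u'(t_i)$, which is $>0$ at $t_1$ and $<0$ at $t_2$; by the description of $\mc N_\la^{\pm}$ this means $t_1u\in\mc N_\la^{+}$ and $t_2u\in\mc N_\la^{-}$.

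\noi It then remains to produce $\la_*>0$ so that $\la c_u<\psi_u(t_{\max}(u))$ for \emph{every} $u\in X_{+,q}\setminus\{0\}$ whenever $\la\in(0,\la_*)$, i.e.\ to show $\inf_{u\ne0}\psi_u(t_{\max}(u))/c_u>0$. The key point is that this ratio is invariant under $u\mapsto cu$: a direct computation gives $t_{\max}(cu)=c^{-1}t_{\max}(u)$, $\psi_{cu}(t_{\max}(cu))=c^{1-q}\psi_u(t_{\max}(u))$ and $c_{cu}=c^{1-q}c_u$. Hence it suffices to bound the ratio below over $\|u\|=1$, where a short computation yields
\[
\psi_u(t_{\max})=\frac{\alpha-p+1}{\alpha+q}\left(\frac{p-1+q}{(\alpha+q)\int_\Om|u|^{\alpha+1}\,dx}\right)^{\frac{p-1+q}{\alpha-p+1}}.
\]
By \eqref{eq00} one has $0<\int_\Om|u|^{\alpha+1}\,dx\le C_{\alpha+1}$ (the embedding $X_0\hookrightarrow L^{\alpha+1}(\Om)$ is continuous even when $\alpha+1=p^*_s$, so no compactness is used) and $c_u\le C_{1-q}$ (with $c_u=|\Om|$ if $q=1$). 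Combining these gives a constant $\la_*>0$, depending only on $n,s,p,q,\alpha,\Om$, with $\psi_u(t_{\max})/c_u\ge\la_*$ for all $u$ with $\|u\|=1$, hence for all $u\ne0$; this is the desired threshold.

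\noi I expect the last step — showing the threshold $\psi_u(t_{\max})/c_u$ stays bounded away from $0$ uniformly over the whole cone $X_{+,q}\setminus\{0\}$ — to be the only genuine point; the scaling invariance collapses it to the Sobolev estimate \eqref{eq00}, and criticality is harmless because only the continuous embedding is needed. A minor bookkeeping issue is the uniform treatment of the case $q=1$, where the singular integral $\int_\Om|u|^{1-q}\,dx$ must be read as $|\Om|$ throughout.
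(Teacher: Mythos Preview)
Your proof is correct and follows essentially the same approach as the paper: you factor $\phi_u'(t)=t^{-q}(\psi_u(t)-\la c_u)$ (the paper's $m_u$ and $A(u)$), locate the unique maximiser $t_{\max}$ of $\psi_u$, and then use the Sobolev constants $C_{\alpha+1}$, $C_{1-q}$ from \eqref{eq00} to obtain a uniform threshold $\la_*$ below which the level $\la c_u$ is crossed exactly twice. The only cosmetic differences are that you organise the uniformity step via the scaling invariance $u\mapsto cu$ (the paper instead keeps $\|u\|$ in the formulas and observes directly that the powers cancel), and you handle $q=1$ by writing $c_u=|\Om|$ from the start rather than splitting into two cases afterwards; both are the same computation in slightly different packaging.
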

\begin{proof}
Define $A(u)= \int_{\Om}|u|^{1-q}dx$ and $B(u)= \int_{\Om}|u|^{\alpha+1}dx$. Let $u \in X_{+,q}$ then we have
\begin{align*}
\frac{d}{dt}I(tu)
=& t^{p-1}\|u\|^p - t^{-q}\la  A(u) - t^{\alpha} B(u)\\
=&t^{-q} \left (m_u(t) - \la A(u) \right )
\end{align*}
and we define $m_u(t) := t^{p-1+q} \|u\|^p - t^{\alpha+q} B(u)$. Since $\ds \lim_{t \rightarrow \infty} m_u(t) = - \infty$,
we can easily see that $m_u(t)$ attains its maximum at $t_{max} = \left [ \frac{(p-1+q)\|u\|^p}{(\alpha+q) B(u)} \right]^{\frac{1}{\alpha+1-p}} $ and
\[ m_u(t_{max}) = \left( \frac{\alpha+2-p}{p-1+q} \right) \left( \frac{p-1+q}{\alpha+q} \right)^{\frac{\alpha+q}{\alpha+1-p}} \frac{\|u\|^\frac{p(\alpha+q)}{\alpha+1-p}}{B(u)^\frac{p-1+q}{\alpha+1-p}}. \]
Now, $u \in \mc N_{\la}$  if and only if  $m_u(t) =  \la A(u) $ and we see that
\begin{equation*}
\begin{split}
m_u(t_{max}) - \la A(u)
\geq ~&  m_u(t_{max}) - \la |u|^{1-q}_{{1-q}}\\
\geq ~& \left( \frac{\alpha+2-p}{p-1+q} \right) \left( \frac{p-1+q}{\alpha+q} \right)^{\frac{\alpha+q}{\alpha+1-p}} \frac{\|u\|^\frac{p(\alpha+q)}{\alpha+1-p}}{B(u)^\frac{p-1+q}{\alpha+1-p}} - \la C_{1-q} \|u\|^{1-q}>0\\
\end{split}
\end{equation*}
if and only if $\la < \left( \frac{\alpha+2-p}{p-1+q} \right) \left( \frac{p-1+q}{\alpha+q} \right)^{\frac{\alpha+q}{\alpha+1-p}} (C_{\alpha+1})^{\frac{-p+1-q}{\alpha+1-p}}C_{1-q}^{-1} $(say), where $C_{\beta}$ is defined as in \eqref{eq00}.

 \noi Case(I) $(0<q<1)$: We can also see that $ m_u(t) = \la \int_{\Om} |u|^{1-q}dx$ if and only if $\phi^{\prime}_u (t) = 0$. So for $\la \in (0,\lambda_*)$, there exists exactly two points $0<t_1< t_{max}<t_2$ with $m^{\prime}_u(t_1)>0$ and $m^{\prime}_u(t_2)<0$ that is, $t_1u \in \mc N^{+}_{\la}$ and $t_2u \in \mc N^{-}_{\la}$. Thus, $\phi_u$ has local minimum at $t=t_1$ and local maximum at $t=t_2$, that is $\phi_{u}$ is decreasing in $(0,t_1)$ and increasing in $(t_1,t_2)$. 

\noi Case(II)$(q=1)$: Since $\ds \lim_{t \rightarrow 0} \phi_{u}(t) = \infty$ and $\ds \lim_{t \rightarrow \infty} \phi_{u}(t) = - \infty$ with similar reasoning as above we get $t_1, t_2$.  
That is, in both cases, $\phi_{u}$ has exactly two critical points $t_1$ and $t_2$ such that $0< t_1 <t_{max}< t_2$, $\phi^{\prime \prime}_{u}(t_1) > 0$ and  $\phi^{\prime \prime}_{u}(t_2) < 0$ that is $t_1u \in \mc N_{\la}^{+}$, $t_2u \in \mc N_{\la}^{-}$.\QED
\end{proof}
{\bf Proof of Theorem \ref{thm3.2}}:
From Lemma \ref{lem3.2}, we see that $\Lambda_1$ is positive. If $I_\la(tu)$ has two critical points for some $\lambda=\lambda^*$, then $t\mapsto I_\la(tu)$ also has two critical points for all $\la<\la^*$.\QED
\begin{Corollary}
$\mc N_{\la}^{0} = \{0\}$ for all $ \la \in (0, \La_1)$.
\end{Corollary}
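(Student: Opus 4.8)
The plan is to argue by contradiction using the factorization $\phi_u'(t) = t^{-q}\bigl(m_u(t) - \la A(u)\bigr)$ from the proof of Lemma \ref{lem3.2}, together with the strict unimodality of $m_u$. Suppose $u \in \mc N_{\la}^{0}$ for some $\la \in (0,\La_1)$. Then $u \in X_{+,q}$, so $u \not\equiv 0$ (whence $A(u)>0$ and $B(u)>0$), and by definition of $\mc N_{\la}^{0}$ we have $\phi_u'(1)=0$ and $\phi_u''(1)=0$.

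First I would record the identity relating $\phi_u''$ to $m_u$ at a critical point: differentiating $\phi_u'(t) = t^{-q}\bigl(m_u(t) - \la A(u)\bigr)$ once more gives $\phi_u''(t) = -q\,t^{-q-1}\bigl(m_u(t) - \la A(u)\bigr) + t^{-q} m_u'(t)$, so whenever $\phi_u'(t_0)=0$ (equivalently $m_u(t_0) = \la A(u)$) one has $\phi_u''(t_0) = t_0^{-q} m_u'(t_0)$. Taking $t_0=1$ and using $\phi_u''(1)=0$ forces $m_u'(1)=0$. But $m_u(t)=t^{p-1+q}\|u\|^p - t^{\al+q}B(u)$ is strictly increasing on $(0,t_{max})$ and strictly decreasing on $(t_{max},\infty)$, so its unique critical point is $t_{max}$; hence $1=t_{max}$, and consequently $\la A(u) = m_u(1) = m_u(t_{max})$.

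Next I would reach the contradiction via the definition of $\La_1$. By Lemma \ref{lem3.2} the set of $\la>0$ such that $\phi_v$ has two critical points for every $v \in X_{+,q}\setminus\{0\}$ is nonempty, and the $\la$-monotonicity observed in the proof of Theorem \ref{thm3.2} shows this set contains all of $(0,\La_1)$. Hence for our $\la<\La_1$ the map $\phi_u$ has two distinct positive critical points, i.e. $m_u(t)=\la A(u)$ has two distinct positive roots. Since $m_u$ is strictly unimodal with maximum value $m_u(t_{max})$, this is possible only if $\la A(u) < m_u(t_{max})$, contradicting the equality derived above. Therefore no such $u$ exists, so $\mc N_{\la}^{0}$ contains no nonzero element for every $\la\in(0,\La_1)$, which is the assertion.

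The only delicate point is the bookkeeping around $\La_1$: one must be sure that ``$\phi_u$ has two critical points'' genuinely holds for \emph{every} $u\in X_{+,q}\setminus\{0\}$ once $\la<\La_1$ — this is precisely the supremum characterization of $\La_1$ combined with the monotonicity already used for Theorem \ref{thm3.2} — and that the maximum of $m_u$ is a strict, non-degenerate maximum, so that having two roots forces the strict inequality $\la A(u)<m_u(t_{max})$ rather than equality. Everything else is a one-line computation.
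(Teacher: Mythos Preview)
Your proof is correct and follows essentially the same approach as the paper: both obtain a contradiction from the fact that, for $\la<\La_1$, every critical point of $\phi_u$ must be a strict local minimum or a strict local maximum (equivalently, $\la A(u)<m_u(t_{\max})$), so $\phi_u''(1)=0$ is impossible. Your version is a bit more explicit in computing $\phi_u''(1)=m_u'(1)$ and in invoking the supremum definition of $\La_1$ (together with the $\la$-monotonicity from Theorem~\ref{thm3.2}) rather than appealing directly to Lemma~\ref{lem3.2}, but the underlying idea is identical.
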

\begin{proof}
Let $u  \in \mc N_{\la}^{0}$ and $u\not\equiv 0$. Then $u \in \mc N_{\la}.$ That is, $t=1$ is a critical point of $\phi_{u}(t)$. By Lemma \ref{lem3.2},  $\phi_{u}$ has critical points corresponding to either local minima or local maxima. So, $t=1$ is the critical point corresponding to either local minima or local maxima of $\phi_{u}$. Thus, either $u \in \mc N_{\la}^{+}$ or $u \in \mc N_{\la}^{-}$, which is a contradiction.\QED
\end{proof}
\noi We can now show that $I$ is bounded below on $\mc N_{\la}^{+}$ and $\mc N_{\la}^{-}$ in following way:
\begin{Lemma}\label{le01}
$\inf I(\mc N_{\la}^{+}) > - \infty$ and $\inf I(\mc N_{\la}^{-}) > - \infty$.
\end{Lemma}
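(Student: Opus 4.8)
The quickest route is to observe that $\mc N_{\la}^{+}\cup\mc N_{\la}^{-}\subseteq\mc N_{\la}$, so the claim is an immediate consequence of the theorem just proved, that $I$ is coercive (hence bounded below) on $\mc N_{\la}$. To make the estimate self-contained and to see that it survives in the critical case, I would redo the computation directly. For $u\in\mc N_{\la}$ with $0<q<1$, use the Nehari identity $\langle I'(u),u\rangle=0$, i.e. $\|u\|^p=\la A(u)+B(u)$ with $A(u)=\int_{\Om}|u|^{1-q}\,dx$ and $B(u)=\int_{\Om}|u|^{\alpha+1}\,dx$, to eliminate $B(u)$ from $I(u)$; this gives
\[ I(u)=\Big(\tfrac1p-\tfrac1{\alpha+1}\Big)\|u\|^p-\la\Big(\tfrac1{1-q}-\tfrac1{\alpha+1}\Big)A(u). \]
Since $p-1<\alpha\le p^*_s-1$, the coefficient $\tfrac1p-\tfrac1{\alpha+1}$ is strictly positive, and by the embedding $X_0\hookrightarrow L^{1-q}(\Om)$ we have $A(u)\le C_{1-q}\|u\|^{1-q}$. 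As $1-q<p$, the real function $t\mapsto(\tfrac1p-\tfrac1{\alpha+1})t^p-\la(\tfrac1{1-q}-\tfrac1{\alpha+1})C_{1-q}\,t^{1-q}$ is bounded below on $[0,\infty)$, so $I(u)\ge-M$ for some $M=M(\la)>0$ uniformly on $\mc N_{\la}$, in particular on $\mc N_{\la}^{+}$ and $\mc N_{\la}^{-}$.

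For $q=1$ I would argue in the same spirit: now the Nehari identity reads $\|u\|^p=\la|\Om|+B(u)$, whence
\[ I(u)=\Big(\tfrac1p-\tfrac1{\alpha+1}\Big)\|u\|^p-\la\int_{\Om}\ln|u|\,dx+\frac{\la|\Om|}{\alpha+1}. \]
Using $\ln t\le t$ for $t>0$ together with $X_0\hookrightarrow L^1(\Om)$ one gets $\int_{\Om}\ln|u|\,dx\le C_1\|u\|$, so $I(u)\ge c_1\|u\|^p-c_2\|u\|+c_3$, which is again bounded below because $p>1$ (the hypothesis $X_{+,1}\neq\emptyset$ is what guarantees $I\not\equiv+\infty$ here).

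The computation is routine; there is no real obstacle, only two points needing mild care. First, one must check $\tfrac1p-\tfrac1{\alpha+1}>0$: this is exactly where the superlinearity $\alpha>p-1$ (already used implicitly in Lemma~\ref{lem3.2}) enters, and it is what turns the Nehari substitution into a coercive expression rather than a merely bounded one. Second, in the case $q=1$ one controls $\int_{\Om}\ln|u|$ only from above — it may equal $-\infty$ for a general $u\in X_{+}$, but since the lower bound for $I$ uses only the upper estimate $\ln|u|\le|u|$, this causes no difficulty. No compactness of any embedding is invoked, so the argument is unchanged when $\alpha=p^*_s-1$.
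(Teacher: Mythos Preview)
Your proof is correct. You rightly note that the claim is already contained in Theorem~3.1 (coercivity and boundedness below of $I$ on $\mc N_\la$), and your direct re-derivation via the Nehari substitution is exactly the computation behind that theorem; nothing is missing, and your remarks on the cases $q=1$ and $\alpha=p^*_s-1$ are accurate.

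The paper's own proof takes a slightly different route worth noting. Instead of appealing to the global estimate on $\mc N_\la$, it exploits the \emph{sign of $\phi''_u(1)$} separately on each piece: from $\phi''_u(1)>0$ together with the Nehari identity it extracts the uniform norm bound $\sup\{\|u\|:u\in\mc N_\la^+\}<\infty$, and from $\phi''_v(1)<0$ the lower bound $\inf\{\|v\|:v\in\mc N_\la^-\}>0$; it also records that sublevel sets of $I$ on $\mc N_\la^-$ are bounded. These auxiliary estimates (labelled \eqref{eq3n1}--\eqref{eq3n2}) are then invoked in Propositions~4.1 and~5.1 to get weak compactness of minimizing sequences and to rule out degeneration to zero. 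So while your argument is the cleanest way to prove the lemma \emph{as stated}, the paper's proof is doing double duty: it proves the lemma and simultaneously harvests the norm bounds needed downstream. If you adopt your version, be aware that you will have to supply those bounds separately when you reach the existence proofs.
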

\begin{proof}
Let $u \in \mc N_{\la}^{+}$ and $v\in \mc N_{\la}^{-}$. Then we have
\begin{equation*}
\begin{split}
0  < \phi^{\prime \prime}_u(1) & \leq (p-1-\alpha) \|u\|^p + \la (\alpha+q) C_{1-q} \|a\|_{\infty} \|u\|^{1-q} ,\\
0  > \phi^{\prime \prime}_v(1) & \geq (p-1+q) \|v\|^p - (\alpha+q) C_{\alpha+1}\|v\|^{\alpha+1}.
\end{split}
\end{equation*}
 Thus we obtain
\[ \|u\| \leq \left ( \frac{\la (\alpha+q) C_{1-q} }{\alpha+1-p} \right )^{\frac{1}{p+q-1}}\; \text{and} \;  \|v\| \geq \left ( \frac{p-1+q}{(\alpha+q)C_{\alpha+1}}\right )^{\frac{1}{\alpha+1-p}}.\]
This implies that
 \begin{equation}\label{eq3n1}
 \sup \{ \|u\| : u \in \mc N_{\la}^{+}\} < \infty \; \text{ and }\;  \inf \{ \|v\| : v \in \mc N_{\la}^{-} \} >0.
\end{equation}
If $I(v) \leq M$, using $\ln(|v|) \leq |v|$ we get
\begin{equation}\label{eq3n2}
\begin{array}{rllll}
\displaystyle \frac{\alpha+1-p}{p(\alpha+1)}  \|v\|^p -  \frac{\la (\alpha+q)C_{1-q}}{(\alpha+1)(1-q)}  \|v\|^{1-q} \leq M, & 0<q<1\\
\displaystyle \text{and } \frac{\alpha+1-p}{p(\alpha+1)} \|v\|^p - \la C_1 \|v\|+ \frac{\la}{\alpha+1}\le M, & q=1.
\end{array}\end{equation}
 which implies  $ \sup \{ \|v\| : v \in \mc N_{\la}^{-} , I(v) \leq M\} < \infty$ for each $M > 0$.
 Using \eqref{eq3n1} and \eqref{eq3n2}, it is easy to show that $\inf I(\mc N_{\la}^{+}) > - \infty$ and  $\inf I(\mc N_{\la}^{-}) > - \infty$. \QED
\end{proof}


\begin{Lemma}\label{le03} Suppose $u\in \mc N_{\la}^{+}$ and $v\in \mc N_{\la}^{-}$ be minimizers of $I$ over $\mc N_{\la}^{+}$ and $\mc N_{\la}^{-}$ respectively. Then for each $ w \in X_{+}$,
\begin{enumerate}
\item  there exists $\epsilon_0 > 0$ such that $I(u +\epsilon w) \geq I(u)$ for each $ \epsilon \in [0, \epsilon_0]$
\item $t_{\epsilon} \rightarrow 1$  as $\epsilon \rightarrow 0^+$, where $t_{\epsilon}$ is the unique positive real number satisfying $t_{\epsilon} (v + \epsilon w) \in \mc N_{\la}^{-}.$
\end{enumerate}
\end{Lemma}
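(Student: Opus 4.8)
\emph{Proof plan.} The plan is to push everything down to the one-dimensional picture of Lemma \ref{lem3.2} and to let the relevant scalings depend continuously on $\epsilon$. Fix $w\in X_{+}$. First I would record that $u+\epsilon w$ and $v+\epsilon w$ stay in $X_{+,q}\setminus\{0\}$ for $\epsilon\ge0$ — automatic if $0<q<1$, while if $q=1$ it follows from $0<u\le u+\epsilon w$ together with $\ln(u+\epsilon w)\le u+\epsilon w\in L^{1}(\Om)$, and similarly for $v$. Then, by Lemma \ref{lem3.2} and the definition of $\Lambda_{1}$, for $\la\in(0,\Lambda_{1})$ the fibre map $\phi_{u+\epsilon w}$ has exactly two (nondegenerate) critical points $0<t_{1}(\epsilon)<t_{\max}(\epsilon)<t_{2}(\epsilon)$ with $t_{1}(\epsilon)(u+\epsilon w)\in\mc N_{\la}^{+}$, $t_{2}(\epsilon)(u+\epsilon w)\in\mc N_{\la}^{-}$, and $\phi_{u+\epsilon w}$ strictly decreasing on $(0,t_{1}(\epsilon))$ and strictly increasing on $(t_{1}(\epsilon),t_{2}(\epsilon))$; likewise $\phi_{v+\epsilon w}$ has critical points $\tilde t_{1}(\epsilon)<\tilde t_{\max}(\epsilon)<\tilde t_{2}(\epsilon)$. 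Because $u\in\mc N_{\la}^{+}$, the value $t=1$ is the local minimum of $\phi_{u}$, so $t_{1}(0)=1<t_{2}(0)$; because $v\in\mc N_{\la}^{-}$, $t=1$ is the local maximum of $\phi_{v}$, so $\tilde t_{2}(0)=1$. Also, $t_{\epsilon}$ in the statement is precisely $\tilde t_{2}(\epsilon)$.

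The key step is to prove that $t_{1}(\epsilon)\to1$, $t_{2}(\epsilon)\to t_{2}(0)$ and $\tilde t_{2}(\epsilon)\to1$ as $\epsilon\to0^{+}$. Since $\|(u+\epsilon w)-u\|=\epsilon\|w\|\to0$ and the maps $z\mapsto\|z\|^{p}$, $z\mapsto A(z)=\int_{\Om}|z|^{1-q}$, $z\mapsto B(z)=\int_{\Om}|z|^{\alpha+1}$ are continuous on $X_{0}$ (for $A$ from $\big|\,|a|^{1-q}-|b|^{1-q}\big|\le|a-b|^{1-q}$ and H\"older; for $B$ from the continuous embedding $X_{0}\hookrightarrow L^{\alpha+1}(\Om)$, still valid at $\alpha+1=p^{*}_{s}$), the coefficients of $m_{u+\epsilon w}$ and the value $t_{\max}(\epsilon)$ converge to those of $m_{u}$ and $t_{\max}(0)$. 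I would then argue by compactness: along any $\epsilon_{n}\to0^{+}$, the scalings $t_{1}(\epsilon_{n})\le t_{\max}(\epsilon_{n})$ and $t_{2}(\epsilon_{n})\ge t_{\max}(\epsilon_{n})$ remain in a fixed compact subinterval of $(0,\infty)$ — they cannot run off to $0$ or $\infty$, since at such a root $m_{u+\epsilon_{n}w}$ would tend to $0$ (forcing $A(u)=0$) or, after division by $t_{2}(\epsilon_{n})^{\alpha+q}$, would force $B(u)=0$. Passing to a convergent subsequence and taking $n\to\infty$ in $\phi'_{u+\epsilon_{n}w}(\cdot)=0$ and in the sign conditions $\phi''_{u+\epsilon_{n}w}(t_{1}(\epsilon_{n}))>0\ge\phi''_{u+\epsilon_{n}w}(t_{2}(\epsilon_{n}))$ gives a pair $a\le t_{\max}(0)\le b$ with $\phi'_{u}(a)=\phi'_{u}(b)=0$ and $\phi''_{u}(a)\ge0\ge\phi''_{u}(b)$; the strict nondegeneracy of the two critical points of $\phi_{u}$ then forces $a=t_{1}(0)=1$ and $b=t_{2}(0)$. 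As the sequence was arbitrary, $t_{1}(\epsilon)\to1$ and $t_{2}(\epsilon)\to t_{2}(0)>1$; the same argument applied to $v$ gives $\tilde t_{2}(\epsilon)\to1$.

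Granting this, part (2) is immediate since $t_{\epsilon}=\tilde t_{2}(\epsilon)\to1$. For part (1), choose $\epsilon_{0}>0$ so that $1<t_{2}(\epsilon)$ for all $\epsilon\in[0,\epsilon_{0}]$ (possible because $t_{2}(\epsilon)\to t_{2}(0)>1$). Then $1\in(0,t_{2}(\epsilon))$, and since $\phi_{u+\epsilon w}$ decreases then increases on $(0,t_{2}(\epsilon)]$ it attains its minimum there at $t_{1}(\epsilon)$, so $\phi_{u+\epsilon w}(1)\ge\phi_{u+\epsilon w}(t_{1}(\epsilon))$; combining with $t_{1}(\epsilon)(u+\epsilon w)\in\mc N_{\la}^{+}$ and the minimality of $u$ over $\mc N_{\la}^{+}$,
\[
I(u+\epsilon w)=\phi_{u+\epsilon w}(1)\ \ge\ \phi_{u+\epsilon w}(t_{1}(\epsilon))=I\big(t_{1}(\epsilon)(u+\epsilon w)\big)\ \ge\ \inf I(\mc N_{\la}^{+})=I(u).
\]

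The main obstacle is the convergence of the scalings in the second paragraph; everything else is bookkeeping. Its two load-bearing ingredients are (i) the strict nondegeneracy of the critical points of $\phi_{u}$ and $\phi_{v}$ — equivalently $m_{z}(t_{\max})>\la A(z)$ for every $z\in X_{+,q}\setminus\{0\}$, which holds exactly because $\la\in(0,\Lambda_{1})$ (Lemma \ref{lem3.2}) — and is what lets the open sign conditions survive in the limit and pin the limiting scalings down to $1$; and (ii) the continuity of $B$ at the critical exponent $\alpha+1=p^{*}_{s}$, where the embedding is no longer compact but strong convergence in $X_{0}$ still yields strong convergence in $L^{p^{*}_{s}}(\Om)$. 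An alternative packaging of the second paragraph is to invoke the implicit function theorem for $\phi'_{u+\epsilon w}(t)=0$ at $(\epsilon,t)=(0,1)$, using $\phi''_{u}(1)>0$, to obtain a $C^{1}$ branch $\epsilon\mapsto t_{1}(\epsilon)$ with $t_{1}(0)=1$, and likewise at the remaining critical points.
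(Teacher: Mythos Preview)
Your argument is correct and rests on the same skeleton as the paper's: reduce to the one-dimensional fibre map picture of Lemma~\ref{lem3.2}, locate $t=1$ inside $(0,t_2(\epsilon))$, use that $t_1(\epsilon)$ is the global minimum of $\phi_{u+\epsilon w}$ on that interval, and finish with the minimality of $u$ on $\mc N_\lambda^+$. The implementations differ. For part~(1) the paper does not track $t_2(\epsilon)$ at all: it simply notes that
\[
\rho(\epsilon):=\phi''_{u+\epsilon w}(1)=(p-1)\|u+\epsilon w\|^p+\la q\,A(u+\epsilon w)-\alpha\,B(u+\epsilon w)
\]
is continuous in $\epsilon$ with $\rho(0)=\phi''_u(1)>0$, so $\rho(\epsilon)>0$ on some $[0,\epsilon_0]$; since the fibre map has a unique inflection point lying strictly between $t_1(\epsilon)$ and $t_2(\epsilon)$, the inequality $\phi''_{u+\epsilon w}(1)>0$ already forces $1<t_2(\epsilon)$, and the chain $I(u+\epsilon w)\ge I(t_1(\epsilon)(u+\epsilon w))\ge\inf I(\mc N_\lambda^+)=I(u)$ follows as in your last display. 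For part~(2) the paper takes precisely the implicit function theorem route you sketch at the end: it writes $\phi'_z(t)=h(t,\|z\|^p,A(z),B(z))$ with $h(t,l_1,l_2,l_3)=l_1t^{p-1}-\la t^{-q}l_2-t^\alpha l_3$, and applies the IFT at $(1,\|v\|^p,A(v),B(v))$ using $\partial_t h=\phi''_v(1)<0$ to produce a continuous branch with $t_\epsilon\to1$. Your unified compactness/subsequence argument is a valid substitute for both steps; it is a little longer but has the merit of treating $t_1,t_2,\tilde t_2$ symmetrically, whereas the paper's choices are shorter because they sidestep the subsequence extraction entirely.
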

\begin{proof}
\begin{enumerate}
\item{
Let $w \in X_{+}$ that is $w \in X_0$ and $w \geq 0$. We set
$$\rho(\epsilon) = (p-1)\|u+\epsilon w\|^p + \la q \int_{\Om} |u+\epsilon w|^{1-q}dx - \alpha \int_{\Om} |u+\epsilon w|^{\alpha+1}dx$$
for each $\epsilon \geq 0$. Then using continuity of $\rho$, $\rho(0) = \phi^{\prime \prime}_u(1) >0$ and $u \in \mc{N}^{+}_{\la}$, there exist $\epsilon_0>0 $ such that $\rho(\epsilon)> 0$ for $\epsilon \in [0, \epsilon_0]$. Since for each $\epsilon > 0$, there exists $t_{\epsilon}^{\prime}>0$ such that $t_{\epsilon}^{\prime}(u + \epsilon w) \in \mc{N}^{+}_{\la}$. So, $t_{\epsilon}^{\prime} \rightarrow 1$ as $\epsilon \rightarrow 0$ and for each $\epsilon \in [0, \epsilon_0]$ we have
\[ I(u + \epsilon w) \geq I(t_{\epsilon}^{\prime}(u + \epsilon w))\geq \inf I(\mc{N}^{+}_{\la})= I(u).\]
}
\item{
We define $h :(0, \infty)\times \mb R^3 \rightarrow \mb R  $ by
\[ h(t,l_1,l_2,l_3) = l_1t^{p-1} - \la t^{-q}l_2 - t^{\alpha}l_3 \]
for $(t,l_1,l_2,l_3)\in (0, \infty)\times \mb R^3.$ Then $h$ is $C^{\infty}$ function. Then, we have
\begin{equation*}
\frac{dh}{dt}(1, \|v\|^p,\int_{\Om}|v|^{1-q}dx ,\int_{\Om}|v|^{\alpha+1}) = \phi^{\prime \prime}_v(1)<0,
\end{equation*}
and for each $\epsilon \geq 0, \; h(t_{\epsilon}, \|v+\epsilon w\|^p, \int_{\Om}|v + \epsilon w|^{1-q}dx ,\int_{\Om} |v|^{\alpha+1}) = \phi^{\prime}_{v+\epsilon w}(t_\epsilon)=0$. Also
\[ h(1, \|v\|^p, \int_{\Om} |v|^{1-q}dx, \int_{\Om} |v|^{\alpha+1}) = \phi^{\prime}_v(1) = 0.\]
Therefore, by implicit function theorem, there exists an open neighborhood $ A \subset (0, \infty)$ and $B \subset \mb R^3$ containing $1$ and $(\|v\|^p,\int_{\Om} |v|^{1-q}dx, \int_{\Om}|v|^{\alpha+1} )$ respectively such that  for all $y \in B$, $h(t,y) = 0 $ has a unique solution $ t = g(y)\in A $, where $g : B \rightarrow A$ is a continuous function. So, $(\|v+\epsilon w\|^p,\; \int_{\Om}|v+ \epsilon w|^{1-q} dx, \int_{\Om}|v+ \epsilon w|^{\alpha+1}) \in B$ and
\[ g \left( \|v+\epsilon w)\|^p,\; \int_{\Om}|v+ \epsilon w|^{1-q} dx, \int_{\Om}|v+ \epsilon w|^{\alpha+1} \right) = t_{\epsilon}  \]
\noi since $h(t_{\epsilon},\|v+\epsilon w)\|^p,\; \int_{\Om}|v+ \epsilon w|^{1-q} dx, \int_{\Om}|v+ \epsilon w|^{\alpha+1}) = 0$. Thus, by continuity of $g$, we get $t_{\epsilon} \rightarrow 1$ as $\epsilon \rightarrow 0^+$.}\QED
\end{enumerate}
\end{proof}

\begin{Lemma} \label{lem3.6} Suppose $u\in \mc N_{\la}^{+}$ and $v\in N_{\la}^{-}$ are minimizers of $I$ on $\mc N_{\la}^{+}$ and $\mc N_{\la}^{-}$ respectively. Then
for each $w \in X_{+}$, we have $u^{-q}w, v^{-q} w \in L^{1}(\Om)$ and
\begin{align}
&\int_Q \frac{|u(x)-u(y)|^{p-2}(u(x)-u(y))(w(x)-w(y))}{|x-y|^{n+sp}}~ dxdy - \la \int_\Om \left( u^{-q}+  u^{\alpha}\right)w dx \geq 0, \label{eq4.2}\\
&\int_Q \frac{|v(x)-v(y)|^{p-2}(v(x)-v(y))(w(x)-w(y))}{|x-y|^{n+sp}} ~dxdy - \la \int_\Om  \left(v^{-q} +  v^{\alpha}\right)w dx \geq 0.\label{eq4.3}\end{align}
\end{Lemma}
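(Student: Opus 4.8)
\noi\textbf{Sketch of proof.}
The plan is to read off \eqref{eq4.2} and \eqref{eq4.3} from the minimality of $u$ (resp.\ $v$), tested against the one-sided perturbations in directions $w\in X_+$ that Lemma~\ref{le03} makes admissible: form the difference quotient of $I$ and let the perturbation parameter tend to $0$. The only delicate term is the singular one, and it is handled by Fatou's lemma, which applies precisely because $G_q$ is nondecreasing on $[0,\infty)$.

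Start with $u\in\mc N_\la^+$ and fix $w\in X_+$. By Lemma~\ref{le03}(1) there is $\e_0>0$ with $I(u+\e w)\ge I(u)$ for $\e\in[0,\e_0]$; note $I(u+\e w)<\infty$ (for $q=1$ use $\ln u\le\ln(u+\e w)\le u+\e w$ with $\ln u\in L^1(\Om)$ since $u\in X_{+,1}$). Dividing $0\le I(u+\e w)-I(u)$ by $\e$ and isolating the singular term yields
\[
\la\int_\Om\frac{G_q(u+\e w)-G_q(u)}{\e}\,dx\ \le\ \frac{\|u+\e w\|^p-\|u\|^p}{p\,\e}\ -\ \frac{1}{\alpha+1}\int_\Om\frac{|u+\e w|^{\alpha+1}-|u|^{\alpha+1}}{\e}\,dx .
\]
As $\e\to0^+$ the right-hand side converges to $\ds\int_Q\frac{|u(x)-u(y)|^{p-2}(u(x)-u(y))(w(x)-w(y))}{|x-y|^{n+sp}}\,dxdy-\int_\Om u^{\alpha}w\,dx$: the first term is the Gateaux derivative at $u$ in the direction $w$ of the $C^1$ functional $z\mapsto\frac1p\|z\|^p$ on $X_0$, and the last follows from dominated convergence, using $\alpha+1\le p^*_s$ and $X_0\hookrightarrow L^{p^*_s}(\Om)$. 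On the left, $G_q$ nondecreasing and $u+\e w\ge u\ge0$ make the integrand nonnegative, and $\liminf_{\e\to0^+}\frac{G_q(u+\e w)-G_q(u)}{\e}\ge u^{-q}w$ a.e.\ (with equality where $u>0$). Fatou's lemma then gives $\la\int_\Om u^{-q}w\,dx\le\int_Q(\cdots)\,dxdy-\int_\Om u^\alpha w\,dx<\infty$, which furnishes $u^{-q}w\in L^1(\Om)$ and, after rearrangement, \eqref{eq4.2} (and, as a byproduct, $u>0$ a.e.\ in $\Om$).

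For $v\in\mc N_\la^-$ the function $v+\e w$ generally leaves $\mc N_\la^-$, so I would first project it back: by Lemma~\ref{le03}(2) there is a unique $t_\e>0$ with $t_\e(v+\e w)\in\mc N_\la^-$ and $t_\e\to1$ as $\e\to0^+$. Minimality gives $I(t_\e(v+\e w))\ge I(v)$, while $v\in\mc N_\la^-$ means $t=1$ is a local maximum of $\phi_v$ (by Lemma~\ref{lem3.2}, $1$ is the larger critical point, with $\phi_v$ increasing just to its left and decreasing just to its right), so $I(t_\e v)=\phi_v(t_\e)\le\phi_v(1)=I(v)$ for $\e$ small. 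Writing $I(t_\e(v+\e w))-I(v)=\big(I(t_\e(v+\e w))-I(t_\e v)\big)+\big(I(t_\e v)-I(v)\big)$ and discarding the second (nonpositive) bracket, $0\le I(t_\e(v+\e w))-I(t_\e v)$. Dividing by $\e$ and expanding — each term now carrying a power of $t_\e\to1$, and in the case $q=1$ the factor $t_\e$ cancelling out of $G_1(t_\e(v+\e w))-G_1(t_\e v)=\ln(v+\e w)-\ln v$ — the same Fatou argument yields $v^{-q}w\in L^1(\Om)$ and \eqref{eq4.3}.

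The main obstacle is the passage to the limit in the singular term: one cannot differentiate $\e\mapsto\int_\Om G_q(u+\e w)\,dx$ under the integral sign, since the integrand may be infinite and $u^{-q}w$ is not a priori integrable. What makes it work is the monotonicity of $G_q$, which forces the difference quotients to be nonnegative and thereby makes Fatou's lemma both applicable and informative; the secondary point is the projection-and-splitting device for $\mc N_\la^-$, which reduces that case to comparing $I$ at the common dilation $t_\e$.
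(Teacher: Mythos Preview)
Your argument is correct and follows essentially the same route as the paper: Lemma~\ref{le03} supplies the one-sided variational inequality, the non-singular terms are differentiated directly, and for $v$ you project back onto $\mc N_\la^-$ via $t_\e$ and use $I(t_\e v)\le I(v)$. The only cosmetic difference is that the paper passes to the limit in the singular term by the monotone convergence theorem (the concavity of $G_q$ makes the difference quotients monotone in $\e$), whereas you invoke Fatou's lemma using only nonnegativity; both yield the needed one-sided bound and hence $u^{-q}w,\,v^{-q}w\in L^1(\Om)$.
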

\begin{proof}
Let $w \in X_{+}$. For sufficiently small $\epsilon > 0$, by Lemma \ref{le03},
\begin{equation}\label{eq7}
\begin{split}
0  \leq \frac{I(u+\epsilon w) - I(u)}{\epsilon}
 = & \frac{1}{p\e} (\|u+\epsilon w\|^p- \|u\|^p)- \frac{\la}{\e} \int_{\Om} (G_q(u + \epsilon w) - G_q(u)) dx \\
& - \frac{1}{\e (\alpha+1)} \int_{\Om} (|u+\epsilon w|^{\alpha+1} - |u|^{\alpha+1}) dx \\
\end{split}
\end{equation}
We can easily verify that as $\epsilon \rightarrow 0^+$,
\begin{enumerate}
\item[($i$)] $\ds \frac{(\|u+ \epsilon w\|^p- \|u\|^p)}{\epsilon} \rightarrow p\int_{Q} \frac{|u(x)-u(y)|^{p-2}(u(x)-u(y))(w(x)-w(y))}{|x-y|^{n+sp}}~dxdy$
\item[($ii$)] $\ds \int_{\Om} \frac{(|u+\epsilon w|^{\alpha+1}-|u|^{\alpha+1})}{\epsilon} dx \rightarrow (\alpha+1) \int_{\Om} |u|^{\alpha-1}u w dx.$
\end{enumerate}
which implies that $ \frac{(G_q(u + \epsilon w) - G_q(u))}{\epsilon} \in L^{1}(\Om)$. Also, for each $x \in \Om,$
$$\frac{G_q(u(x)+\epsilon w(x)) - G_q(u(x))}{\epsilon}=
\left\{
	\begin{array}{ll}
		\frac{1}{\epsilon} \left( \frac{|u+\epsilon w|^{1-q}(x) - |u|^{1-q}(x)}{1-q} \right) & \mbox{if } 0<q<1 \\
		\frac{1}{\epsilon} \left( \ln(|u+\epsilon w|) - \ln(|u|) \right) & \mbox{if } q=1
	\end{array}
\right.$$
which increases monotonically as $\epsilon \downarrow 0$ and
$$\lim\limits_{\epsilon \downarrow 0} \frac{G_q(u(x)+\epsilon w(x)) - G_q(u(x))}{\epsilon} =
\left\{
	\begin{array}{ll}
	0 & \mbox{if} \; w(x)=0 \\
	(u(x))^{-q} w(x) & \mbox{if} \; w(x)>0 , u(x) > 0\\
	\infty & \mbox{if}\; w(x) > 0 , u(x) =0.
    \end{array}
\right.$$
So using monotone convergence theorem for $\{G_q\}$, we get $u^{-q}w \in L^1(\Om)$. Letting $\epsilon \downarrow 0$ in both sides of \eqref{eq7}, we get \eqref{eq4.2}.
Next, we will show these properties for $v$. For each $\epsilon >0 $, there exists $t_{\epsilon}>0$ with $t_{\epsilon}(v+\epsilon w) \in \mc N^-_\la$. By Lemma \ref{le03}(2), for sufficiently small $\epsilon > 0$, there holds
\[ I(t_{\epsilon}(v+\epsilon w)) \geq I(v) \geq I(t_{\epsilon}v)\]
which implies $I(t_{\epsilon}(v+\epsilon w)) - I(v) \geq 0$ and thus, we have
\begin{align*}
\la  \int_{\Om} ( G_q(t_{\epsilon}|v+\epsilon w|^{1-q}) - G_q(|v|^{1-q})) dx  \leq &\frac{t_{\epsilon}^{p}}{p} (\|v+\epsilon w\|^p - \|v\|^p) \\
&- \frac{t^{\alpha+1}_\epsilon}{ \alpha+1} \int_{\Om}(|v+\epsilon w|^{\alpha+1} - |v|^{\alpha+1})dx.\end{align*}
As $\epsilon \downarrow 0$, $t_\epsilon \rightarrow 1$. Thus, using similar arguments as above, we obtain $v^{-q}w \in L^1(\Om)$ and \eqref{eq4.3} follows. \QED
\end{proof}

\noi Let $\eta > 0$ be such that $\phi = \eta \phi_1$ satisfies
 \begin{equation} \label{eq3.4}
 \int_{Q}\frac{|\phi(x)-\phi(y)|^{p-2}(\phi(x)-\phi(y))(\psi(x)-\psi(y))}{|x-y|^{n+sp}} dxdy \leq \la \int_{Q}\phi^{-q}\psi+\int_{Q}\phi^{\alpha}\psi
 \end{equation}
for all $\psi \in X_0$ (i.e $\phi$ is a sub-solution of $(P_{\la})$ ) and  $\phi^{\alpha+q} (x) \leq \la \left(\frac{q}{\alpha}\right)$, for each $x \in \Om$. Then we have
 \begin{Lemma}\label{le05} Suppose $u\in \mc N_{\la}^{+}, v\in \mc N_{\la}^{-}$ are minimizers of $I$ on $\mc N_{\la}^{+}$ and $\mc N_{\la}^{-}$ respectively. Then
  $u \geq \phi$ and $v\ge \phi$ in $\Om$.
 \end{Lemma}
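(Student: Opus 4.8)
The plan is to prove both bounds by a weak comparison argument of maximum-principle type. I will carry out the case $u\in\mc N_\la^+$ in detail; the case $v\in\mc N_\la^-$ is identical, with \eqref{eq4.3} replacing \eqref{eq4.2}.

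First I would set $w:=(\phi-u)^+$. Since $\phi=\eta\phi_1\in X_0$, $u\in X_0$ and $X_0$ is stable under truncation, $w\in X_+$; moreover $w$ is supported in $A:=\{x\in\Om:\phi(x)>u(x)\}\subset\Om$ and $0\le w\le\phi$ there. (Observe also that $u>0$ a.e.\ in $\Om$: otherwise $u^{-q}w'\notin L^1(\Om)$ for $w'\in X_+$ strictly positive, contradicting Lemma~\ref{lem3.6}.) Taking $w$ as a test function in the sub-solution inequality \eqref{eq3.4} for $\phi$ and in \eqref{eq4.2} for $u$, and subtracting, I would obtain, writing $\Phi(t):=|t|^{p-2}t$,
\[
\int_Q\frac{\big(\Phi(\phi(x)-\phi(y))-\Phi(u(x)-u(y))\big)\big(w(x)-w(y)\big)}{|x-y|^{n+sp}}\,dxdy\ \le\ \int_\Om\big(\la(\phi^{-q}-u^{-q})+(\phi^\al-u^\al)\big)w\,dx .
\]
The right-hand side is finite: $u^{-q}w\in L^1(\Om)$ by Lemma~\ref{lem3.6}, $\phi^{-q}w\le\phi^{1-q}\le\eta^{1-q}$ on $A$, and the superlinear terms are controlled by the embedding $X_0\hookrightarrow L^{\al+1}(\Om)$.

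Next I would estimate the two sides separately. For the right-hand side, on $A$ one has $0<u<\phi\le(\la q/\al)^{1/(\al+q)}$ by the hypothesis $\phi^{\al+q}\le\la(q/\al)$; since $g(t):=\la t^{-q}+t^\al$ satisfies $g'(t)=t^{-q-1}(\al t^{\al+q}-\la q)\le 0$ on the interval $(0,(\la q/\al)^{1/(\al+q)}]$, $g$ is non-increasing there, so $g(\phi)\le g(u)$ on $A$ and the right-hand integral is $\le 0$. For the left-hand side, distinguishing the signs of $\delta_x:=\phi(x)-u(x)$ and $\delta_y:=\phi(y)-u(y)$ and using that $\Phi$ and $t\mapsto t^+$ are non-decreasing, the factors $\Phi(\phi(x)-\phi(y))-\Phi(u(x)-u(y))$ and $w(x)-w(y)=\delta_x^+-\delta_y^+$ always carry the same sign, so the integrand is $\ge 0$ a.e.\ on $Q$. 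Combining, the left-hand side is at once $\ge 0$ and $\le 0$, hence equals $0$, and its non-negative integrand vanishes a.e.\ on $Q$.

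Finally I would deduce $w\equiv 0$ by exploiting the non-locality. If $|A|>0$, then for a.e.\ $x\in A$ and every $y\in\R^n\setminus\Om$ we have $\phi(y)=u(y)=0$, so $w(x)-w(y)=\phi(x)-u(x)>0$ and $\Phi(\phi(x))-\Phi(u(x))>0$; hence the integrand is strictly positive on $A\times(\R^n\setminus\Om)\subset Q$, a set of positive measure, contradicting that the integrand vanishes a.e. Therefore $|A|=0$, i.e.\ $\phi\le u$ a.e.\ in $\Om$; the same argument with \eqref{eq4.3} and $v$ in place of \eqref{eq4.2} and $u$ yields $\phi\le v$. I expect the main obstacle to be securing the sign of the right-hand side, which is exactly where the structural assumption $\phi^{\al+q}\le\la(q/\al)$ enters: the singular term $\la t^{-q}$ is decreasing while $t^\al$ is increasing, and only below the threshold $(\la q/\al)^{1/(\al+q)}$ is their sum monotone, as the comparison requires.
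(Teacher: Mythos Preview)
Your argument is correct and follows essentially the same comparison scheme as the paper: test the sub-solution inequality \eqref{eq3.4} for $\phi$ and the minimizer inequality \eqref{eq4.2} for $u$ against $w=(\phi-u)^+$, subtract, and handle the right-hand side via the monotonicity of $t\mapsto\la t^{-q}+t^\al$ on $(0,(\la q/\al)^{1/(\al+q)}]$. The difference lies in how the left-hand side is treated. The paper approximates $(\phi-u)^+$ by compactly supported $w_k$, decomposes $Q$ into four pieces according to the signs of $\phi-u$ at $x$ and $y$, and invokes the algebraic inequality $|a-b|^p\le 2^{p-2}(|a|^{p-2}a-|b|^{p-2}b)(a-b)$ to extract the bound $-2^{2-p}\|(\phi-u)^+\|^p\ge 0$, whence $(\phi-u)^+=0$. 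You instead observe directly that, since both $\Phi(t)=|t|^{p-2}t$ and $t\mapsto t^+$ are non-decreasing, the integrand is pointwise non-negative on $Q$; together with the right-hand side being $\le 0$ this forces the integrand to vanish a.e., and then the non-local structure (the piece $A\times(\R^n\setminus\Om)\subset Q$) yields $|A|=0$.

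Your route is a bit more elementary and, notably, does not rely on the inequality $|a-b|^p\le 2^{p-2}(\Phi(a)-\Phi(b))(a-b)$, which is only valid for $p\ge 2$; thus your argument covers the full range $p>1$ without modification. The paper's approach, on the other hand, gives a quantitative coercivity estimate on $(\phi-u)^+$ in the $X_0$-norm and does not need to appeal to the exterior region. Both are valid and short; your use of the non-locality to conclude is a nice touch.
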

\begin{proof}
By Lemma \ref{lem2.1},  let $\{w_k\}$ be a sequence in $X_0$ such that supp$(w_k)$ is compact, $0 \leq w_k \leq (\phi - u)^+$ for each $k$ and $\{w_k\}$ strongly converges to $(\phi - u)^+$ in $X_0$. Then
\begin{equation}\label{eq3.5}
\frac{d}{dt}(\la t^{-q}+t^{\alpha})= -q \la t^{-q-1} +\alpha t^{\alpha-1} \leq 0
\;\text{if and only if}\; t^{\alpha+q}\leq \la  \left(\frac{q}{\alpha}\right).
\end{equation}
Using Lemma Lemma \ref{lem3.6} and \eqref{eq3.4}, we have
\begin{align*} &\int_Q \frac{\left(f(u)-f(\phi)\right)}{|x-y|^{n+sp}} (w_k(x) - w_k(y))~dxdy -  \int_\Om (\la u^{-q} + u^{\alpha})w_kdx + \int_{\Om} (\la \phi^{-q} +\phi^{\alpha})w_kdx \geq 0,\end{align*}
where $f(\xi)= |\xi(x)-\xi(y)|^{p-2}(\xi(x)-\xi(y))$.
 Since $\{w_k\}$ converges to $(\phi - u)^+$ strongly, we get a subsequence of $\{w_k\}$ such that $w_k(x)\rightarrow (\phi-u)^+(x)$ pointwise almost everywhere in $\Om$ and we write $w_k(x) = (\phi - u)^+(x)+ o(1)$ as $k \rightarrow \infty$. Then,
\begin{equation*}
\begin{split}
\int_Q \frac{\left(f(u)-f(\phi)\right)}{|x-y|^{n+sp}} (w_k(x) - w_k(y)) ~dxdy
&= \int_Q \frac{\left(f(u)-f(\phi)\right)}{|x-y|^{n+sp}} ((\phi - u)^+(x) - (\phi - u)^+(y))~dxdy \\
&\quad \quad + o(1)\int_Q \frac{\left(f(u)-f(\phi)\right)}{|x-y|^{n+sp}}~dxdy.
\end{split}
\end{equation*}
Further  we can see that
\begin{align}\label{eq3.6}
\int_Q &\frac{\left(f(u)-f(\phi)\right)}{|x-y|^{n+sp}} ((\phi - u)^+(x) - (\phi - u)^+(y))~dxdy\nonumber \\
& = \left(\int_{\Om_{1}\times \Om_1} + \int_{\Om_1 \times\Om_{2}} +\int_{\Om_{2}\times \Om_{1}} +\int_{\Om_{2}\times \Om_{2}}\right) \frac{\left(f(u)-f(\phi)\right)}{|x-y|^{n+sp}}((\phi - u)^+(x) - (\phi - u)^+(y)) ~dxdy
\end{align}
where
$\Om_{1}=\{x : \phi(x) \geq u(x)\}$ and $\Om_2= \{x :\phi(x) \leq u(x) \}$. Now, we separately estimate each integrals and to begin with, firstly we see that
\begin{equation}\label{eq3.7}
\int_{\Om_2 \times \Om_2} \frac{\left(f(u)-f(\phi)\right)}{|x-y|^{n+sp}}((\phi - u)^+(x) - (\phi - u)^+(y)) ~dxdy    =0.
\end{equation}
Next, we see that
\begin{align}\label{eq3.8}
&\int_{\Om_1\times \Om_1}  \frac{\left(f(u)-f(\phi)\right)}{|x-y|^{n+sp}}((\phi - u)^+(x) - (\phi - u)^+(y)) ~dxdy\nonumber \\
& = - \int_{\Om_1 \times \Om_1}\frac{\left(f(\phi)-f(u)\right)}{|x-y|^{n+sp}} ((\phi - u)(x) - (\phi - u)(y))~dxdy\nonumber\\
& \leq -\frac{1}{2^{p-2}} \int_{\Om_1\times \Om_1} \frac{|(\phi-u)(x)-(\phi-u)(y)|^{p}}{|x-y|^{n+sp}}~dxdy
\end{align}
using $|a-b|^p \leq 2^{p-2}(|a|^{p-2}a -|b|^{p-2}b)(a-b), ~ p\geq 2$ and $a,b \in \mb R$. Now, consider
\begin{equation}\label{eq3.9}
\begin{split}
& \int_{\Om_1\times \Om_2} \frac{\left(f(u)-f(\phi)\right)}{|x-y|^{n+sp}}((\phi - u)^+(x) - (\phi - u)^+(y)) ~dxdy\\
& =  \int_{\Om_1\times \Om_2} \frac{\left(f(u)-f(\phi)\right)}{|x-y|^{n+sp}}(\phi-u)(x) ~dxdy\\
& \leq -\frac{1}{2^{p-2}} \int_{\Om_1\times \Om_2} \frac{|(\phi-u)(x)-(\phi-u)(y)|^{p}}{|x-y|^{n+sp}}~dxdy +\\
& \quad + \int_{\Om_1\times \Om_2}\frac{\left(f(u)-f(\phi)\right)}{|x-y|^{n+sp}}(\phi-u)(y) ~dxdy
\end{split}
\end{equation}
and similarly, we will get
\begin{equation}\label{eq3.10}
\begin{split}
&\int_{\Om_2\times \Om_1} \frac{\left(f(u)-f(\phi)\right)}{|x-y|^{n+sp}}((\phi - u)^+(x) - (\phi - u)^+(y)) ~dxdy\\
 &\leq -\frac{1}{2^{p-2}} \int_{\Om_2\times \Om_1} \frac{|(\phi-u)(x)-(\phi-u)(y)|^{p}}{|x-y|^{n+sp}}~dxdy
 - \int_{\Om_2\times \Om_1}\frac{\left(f(u)-f(\phi)\right)}{|x-y|^{n+sp}}(\phi-u)(x) ~dxdy.
\end{split}
\end{equation}
Thus using \eqref{eq3.6}-\eqref{eq3.10}, we get
\begin{equation*}
\begin{split}
\int_{Q} \frac{\left(f(u)-f(\phi)\right)}{|x-y|^{n+sp}}&((\phi - u)^+(x) - (\phi - u)^+(y)) ~dxdy\\
& \leq -\frac{1}{2^{p-2}}\|(\phi-u)\|^p + \int_{\Om_1\times \Om_2}\frac{\left(f(u)-f(\phi)\right)}{|x-y|^{n+sp}}(\phi-u)(y) ~dxdy\\
& \quad \quad- \int_{\Om_2\times \Om_1}\frac{\left(f(u)-f(\phi)\right)}{|x-y|^{n+sp}}(\phi-u)(x) ~dxdy\\
& = -\frac{1}{2^{p-2}}\|(\phi-u)\|^p.
\end{split}
\end{equation*}
\noi Since $\phi^{\alpha +q} (x) \leq \la \left(\frac{q}{\alpha}\right)$, for each $x \in \Om$, using \eqref{eq3.5} we get
\begin{equation*}
\begin{split}
\int_\Om & ((\la u^{-q} + u^{\alpha})- (\la \phi^{-q} + \phi^{\alpha}))w_k dx \\
& = \int_{\Om \cap \{\phi \geq u\}} ((\la u^{-q} + u^{\alpha})- (\la \phi^{-q} + \phi^{\alpha}))(\phi-u)^{+}(x) dx + o(1) \geq 0\\
 \end{split}
\end{equation*}
which implies
 \begin{align*}
 0 &\leq  -\frac{1}{2^{p-2}}\|(\phi-u )^+\|^2 -\int_\Om (\la u^{-q} + u^{\alpha})w_kdx + \int_{\Om} (\la \phi^{-q} + \phi^{\alpha})w_kdx +o(1)\\
  &\leq  -\frac{1}{2^{p-2}}\|(\phi-u )^+\|^2 + o(1)
  \end{align*}
and letting $k \rightarrow \infty$, we get
$-\|(\phi-u )^+\|^2 \geq 0.$
Thus, we showed $u \geq \phi$. Similarly, we can show $v\ge \phi.$ \QED
\end{proof}

\section{Existence of minimizer on $\mc N_{\la}^{+}$ }
In this section, we will show that the minimum of $I$ on $\mc N_{\la}^{+}$ is achieved in $\mc N_{\la}^{+}.$ Also, we show that this minimizer is also solution of $(P_\la).$
\begin{Proposition}
For all $\la\in (0,\La)$, there exist $u_\la \in \mc N_{\la}^{+}$ satisfying $I(u_\la) = \inf\limits_{u\in \mc N_{\la}^{+}} I(u)$.
\end{Proposition}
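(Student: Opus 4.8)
The plan is to use the direct method of the calculus of variations on the set $\mc N_{\la}^{+}$. By Lemma \ref{le01} we know $m^+ := \inf I(\mc N_{\la}^{+}) > -\infty$, so we may pick a minimizing sequence $\{u_k\} \subset \mc N_{\la}^{+}$ with $I(u_k) \to m^+$. By \eqref{eq3n1}, $\sup_k \|u_k\| < \infty$, so up to a subsequence $u_k \rightharpoonup u_\la$ weakly in $X_0$, and by the compact embedding $X_0 \hookrightarrow L^r(\Om)$ for $1 \le r < p^*_s$ (which, crucially, covers both the exponents $1-q$ and $\alpha+1$ since we are in the regime $\alpha < p^*_s - 1$ when multiplicity is sought, or more carefully one handles $\alpha+1 \le p^*_s$ with care — but for $\mc N_{\la}^{+}$ the norm bound keeps us away from the critical threshold) we get $u_k \to u_\la$ strongly in $L^{\alpha+1}(\Om)$ and in $L^{1-q}(\Om)$, and $u_k \to u_\la$ a.e. in $\Om$. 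Also $u_\la \ge 0$ since each $u_k \ge 0$.

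\textbf{Key steps.} First I would show $u_\la \not\equiv 0$: since $u_k \in \mc N_{\la}^{+} \subset \mc N_{\la}$ we have $\|u_k\|^p = \la A(u_k) + B(u_k)$; if $u_\la \equiv 0$ then $A(u_k), B(u_k) \to 0$, forcing $\|u_k\| \to 0$, but then using $\phi''_{u_k}(1) > 0$ together with the explicit form of $\phi''$ and the relation defining $\mc N_\la$ one derives a lower bound on $A(u_k)$ in terms of $\|u_k\|$ that is incompatible with $\|u_k\| \to 0$ when $0<q<1$ (and a similar contradiction for $q=1$); alternatively, invoke Lemma \ref{le05}, since the minimizer construction will force $u_k \ge \phi > 0$ up to working with the right minimizing sequence, but cleanest is the scaling contradiction. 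Second, I would pass to the limit in the energy: by weak lower semicontinuity of the norm, $\|u_\la\|^p \le \liminf \|u_k\|^p$, while the lower-order terms converge by strong $L^r$ convergence; for $0<q<1$ this directly gives $I(u_\la) \le \liminf I(u_k) = m^+$. For $q=1$ one needs $\int_\Om \ln u_k \to \int_\Om \ln u_\la$ (or at least $\limsup \int \ln u_k \le \int \ln u_\la$ fails in the wrong direction, so one uses $\ln$ is bounded above by a power and Fatou appropriately on $-\ln$). Third, I must show $u_\la \in \mc N_{\la}^{+}$ and that the infimum is attained, i.e. $I(u_\la) = m^+$: one shows the fiber map $\phi_{u_\la}$ has its first critical point $t_1(u_\la) = 1$. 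Since $u_\la \in X_{+,q}\setminus\{0\}$, Lemma \ref{lem3.2} gives a unique $t_1 = t_1(u_\la)$ with $t_1 u_\la \in \mc N_{\la}^{+}$; then $m^+ \le I(t_1 u_\la) \le \liminf I(t_1 u_k)$. But $t=1$ minimizes $\phi_{u_k}$ on $(0, t_{\max}(u_k))$, hence $\phi_{u_k}(t_1) \ge \phi_{u_k}(1) = I(u_k)$ provided $t_1 \le t_{\max}(u_k)$ — one checks $t_1 \le t_{\max}(u_k)$ for $k$ large using convergence of the coefficients — so $I(t_1 u_k) \ge I(u_k)$, giving $m^+ \le I(t_1 u_\la) \le \liminf I(u_k) = m^+$, forcing equality; combined with $I(u_\la) \ge I(t_1 u_\la)$ from step two this pins $t_1 = 1$ (uniqueness in Lemma \ref{lem3.2}), so $u_\la \in \mc N_{\la}^{+}$ and $I(u_\la) = m^+$.

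\textbf{Main obstacle.} The delicate point is step three — establishing that no energy is lost in the weak limit and that the limit actually lands in $\mc N_{\la}^{+}$ rather than on $\mc N_{\la}^{0}$ or $\mc N_{\la}^{-}$. The subtlety is that strong convergence in $X_0$ is not a priori available; one only has it for the subcritical lower-order terms. The trick is precisely the fibering argument: projecting $u_k$ (or $u_\la$) back onto the Nehari submanifold along its fiber and exploiting that $t = 1$ is a \emph{local minimum} of each $\phi_{u_k}$ to get the energy inequality in the right direction, which then forces $\|u_k\| \to \|u_\la\|$ a posteriori, i.e. strong convergence in $X_0$ after all. A secondary technical nuisance is the $q=1$ case, where the logarithmic term requires separate (but routine) estimates using $\ln t \le t$ and the $L^1$ control of $\ln \phi_1$ coming from $\partial\Om \in C^2$; I would dispatch it in parallel with the $0<q<1$ case at each step rather than separately.
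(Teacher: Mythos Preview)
Your step three contains a genuine logical slip. You write $m^+\le I(t_1u_\la)\le \liminf I(t_1u_k)$ and then, from $I(t_1u_k)\ge I(u_k)$, conclude $I(t_1u_\la)\le \liminf I(u_k)=m^+$. But $I(t_1u_k)\ge I(u_k)$ gives $\liminf I(t_1u_k)\ge m^+$, not $\le m^+$, so the chain does not close. The obstruction is real: if convergence is not strong, Brezis--Lieb gives $\phi'_{u_\la}(1)=-c^p<0$ (with $c^p=\lim\|u_k-u_\la\|^p$), which forces $1<t_1(u_\la)$ or $1>t_2(u_\la)$; in the second alternative $t=1$ lies past the local maximum of $\phi_{u_\la}$, so the inequality $I(t_1u_\la)\le I(u_\la)$ that your projection argument implicitly needs can fail. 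The paper does not project: it proves strong convergence by contradiction. If $c>0$, the case $1<t_1$ is killed by $m^+=I(u_\la)+c^p/p>\phi_{u_\la}(1)>\phi_{u_\la}(t_1)\ge m^+$; the case $1>t_2$ is killed by introducing $f(t)=\phi_{u_\la}(t)+c^pt^p/p$, noting $f'(1)=0$ and $f'(t_2)=c^pt_2^{p-1}>0$, so $f$ increases on $[t_2,1]$, giving $m^+=f(1)>f(t_2)>\phi_{u_\la}(t_2)>\phi_{u_\la}(t_1)\ge m^+$. Once $c=0$, $u_\la\in\mc N_\la$ and $\la<\Lambda_1$ (so $\mc N_\la^0=\{0\}$) put $u_\la\in\mc N_\la^+$.

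Two further gaps. First, the proposition covers the critical case $\alpha=p^*_s-1$, and your remark that ``the norm bound keeps us away from the critical threshold'' is not an argument: a uniform bound on $\|u_k\|$ does not prevent concentration in $L^{p^*_s}$. The paper runs a separate analysis tracking both defects $c^p$ and $d^{p^*_s}=\lim\int|u_k-u_\la|^{p^*_s}$, uses $c^p\ge Sd^p$, and splits into three subcases according to the sign of $c^p/p-d^{p^*_s}/p^*_s$ and the position of $t_2$, each leading to a contradiction or to $c=0$. Second, your argument for $u_\la\not\equiv0$ via ``a lower bound on $A(u_k)$'' does not work: combining $\phi''_{u_k}(1)>0$ with $u_k\in\mc N_\la$ yields only an \emph{upper} bound on $B(u_k)$ in terms of $A(u_k)$, hence an upper bound on $\|u_k\|$. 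The paper first proves $\inf I(\mc N_\la^+)<0$ by a direct computation and then observes that $u_\la\equiv0$ would give $\|u_k\|\to0$ and $I(u_k)\to0$, a contradiction. (Lemma~\ref{le05} is stated for minimizers, not for elements of a minimizing sequence, so it cannot be invoked here.)
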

\begin{proof}
Assume $0<q\leq1$ and $\la \in (0, \Lambda)$. We show that there exist $u_\la \in \mc N_{\la}^{+}$ such that $\ds I(u_\la) = \inf_{u\in \mc N_{\la}^{+}} I(u)$. Let $\{u_{k}\} \subset \mc N_{\la}^{+}$ be a sequence such that $I(u_{k}) \rightarrow \inf I(\mc N_{\la}^{+})$ as $k \rightarrow \infty$. Now by \eqref{eq3n1} we can assume that there exists $u_\la \in X_0$ such that $u_{k} \rightharpoonup u_\la$ weakly  in $X_0$ (up to subsequence). First we will show that $\inf I(\mc N_{\la}^{+}) < 0$. Let $u_0 \in \mc N_{\la}^{+}$, we have $\phi^{\prime \prime}_{u_0}(1) >0$ which gives
\[ \left( \frac{p-1+q}{\alpha+q}  \right)\|u_0\|^p > \int_{\Om}|u_0|^{\alpha+1} dx .\]
Therefore, using $\alpha>p-1$ we obtain
\begin{equation*}
\begin{split}
I(u_0) & = \left( \frac{1}{p} - \frac{1}{1-q} \right) \|u_0\|^p + \left( \frac{1}{1-q} - \frac{1}{\alpha+1} \right)\int_{\Om} |u_0|^{\alpha+1}dx\\
& \leq -  \frac{(p+q-1)}{p(1-q)} \|u_0\|^p + \frac{(p+q-1)}{(\alpha+1)(1-q)} \|u_0\|^p= \left( \frac{1}{\alpha+1}-\frac{1}{p}\right)\left(\frac{p+q-1}{1-q}\right) \|u_0\|^p<0\\
\end{split}
\end{equation*}
\noi Case(I) ($\alpha < p^*_s-1$) Firstly, we claim that $u_{\la} \in X_{+,q}$. When $0 < q <1$ , if $u_{\la}=0$ then $0 = I(u_{\la}) \leq \underline{\lim} \; I(u_{k}) < 0$, which is a contradiction. In the case $q = 1$, the sequence $\left \{ \int_{\Om} \ln(|u_{k}|) \right \}$ is bounded, since the sequence $\{I(u_{k})\}$ and $\{\|u_{k}\|\}$ is bounded and using Fatou's Lemma and $\ln(|u_{k}|) \leq u_{k},$ for each $k$, we get
\[ -\infty < \overline{\lim_{k \rightarrow \infty}} \int_{\Om} \ln(|u_{k}|) dx \leq  \int_{\Om} \overline{\lim_{k \rightarrow \infty}} \ln(|u_{k}|)dx =  \int_{\Om} \ln(|u_{\la}|)dx. \]
which implies $u_{\la} \not\equiv 0$ and thus, in both cases we have shown $u_{\la} \in X_{+,q}$. We claim that $u_{k} \rightarrow u_{\la}$ strongly in $X_0$. Suppose not. Then, we may assume $ \|u_{k} - u_{\la}\| \rightarrow c >0$. Using Brezis-Lieb lemma and embedding results for $X_0$ in the subcritical case, we have
\begin{equation}\label{eq6}
 \lim_{k \rightarrow \infty}\phi^{\prime}_{u_{k}}(1) = \phi^{\prime}_{u_{\la}}(1)+ c^p
\end{equation}
which implies $\phi^{\prime}_{u_{\la}}(1) + c^p =0$, using $\phi^{\prime}_{u_{k}}(1) = 0$ for each $k$. Since $\la \in (0, \Lambda)$, there exist $0 < t_1 < t_2$ (by fibering map analysis) such that $\phi^{\prime}_{u_{\la}}(t_1) = \phi^{\prime}_{u_{\la}}(t_2) = 0$ and $t_1u_{\la} \in \mc N_{\la}^{+}$. By \eqref{eq6}, we have $\phi^{\prime}_{u_{\la}}(1) < 0$ which gives two cases : $1< t_1$ or $t_2 < 1$. When $t_1>1$, we have
\[ \inf I(\mc N_{\la}^{+}) = \lim I(u_{k})= I(u_{\la})+\frac{c^p}{p} = \phi_{u_{\la}}(1) +\frac{c^p}{p} > \phi_{u_{\la}}(1) > \phi_{u_{\la}}(t_1) \geq \inf I(\mc N_{\la}^{+}),\]
which is  a contradiction. Thus we have $t_2 < 1$. We set, for $t>0$, $f(t) = \phi_{u_{\la}}(t) + \frac{c^pt^p}{2}, t>0$. From \eqref{eq6}, we get $f^{\prime}(1) = 0$ and since $0< t_2<1$, $f^{\prime}(t_2)= t^{p-1}_2c^p >0 $. So, $f$ is increasing on $[t_2, 1]$ and we obtain
\[ \inf I(\mc N_{\la}^{+}) = I(u_{\la})+\frac{c^p}{p} = \phi_{u_{\la}}(1) +\frac{c^p}{p} =f(1) > f(t_2) > \phi_{u_{\la}}({t_2}) > \phi_{u_{\la}}({t_1}) \geq \inf I(\mc N_{\la}^{+}) ,\]
which gives a contradiction. Hence, $c = 0$ and thus,  $u_{k} \rightarrow u_{\la}$ strongly in $X_0$. Since  $\la \in (0, \Lambda)$, we have  $\phi^{\prime \prime}_{u_{\la}}(1) > 0$, so we obtain $u_{\la} \in \mc N_{\la}^{+}$ and $I(u_{\la}) = \inf I(\mc N_{\la}^{+}).$

\noi Case(II) ($\alpha = p^*_s-1$ and $0<q<1$) We set $w_k := u_k - u_\la$ and claim that $u_k \rightarrow u_\la$ strongly in $X_0$. Suppose $\|w_k\|^p \rightarrow c^p \neq 0$ and $\int_{\Om} |w_k|^{p^*_s}dx \rightarrow d^{p^*_s}$ as $k \rightarrow \infty$. Since $u_k \in \mc N^+_{\la}$, using Brezis-Lieb Lemma, we get
\begin{equation}\label{eq8}
0  = \lim_{k \rightarrow \infty} \phi^{\prime}_{u_k}(1) = \phi^{\prime}_{u_\la}(1)+c^p -d^{p^*_s}
\end{equation}
which implies
$$ \|u_\la\|^p+c^p = \la \int_{\Om}|u_\la|^{1-q}dx + \int_{\Om}|u_k|^{p^*_s}dx +d^{p^*_s}.$$
We claim that $u_\la \in X_{+,q}$. Suppose $u_\la\equiv 0$. If $0<q<1$ and $c=0$ then $0 > \inf I(\mc N^+_{\la}) = I(0)=0$, which is a contradiction and if $c\neq 0$ then
\begin{equation}\label{eq4.2new}
\inf I(\mc N^+_{\la})= I(0)+\frac{c^p}{p} - \frac{d^{p^*_s}}{p^*_s} = \frac{c^p}{p} - \frac{d^{p^*_s}}{p^*_s} .\end{equation}
But we have $\|u_k\|^p_{p^*_s}S \leq \|u_k\|^p$ which gives $c^p \geq Sd^p $. Also from \eqref{eq8}, we have $c^p=d^{p^*_s}$. Then \eqref{eq4.2new} implies
\[ 0 > \inf I(\mc N^+_{\la}) = \left(\frac{1}{p}-\frac{1}{p^*_s}\right)c^p \geq \frac{s}{n}S^{\frac{n}{sp}},\]
which is again a contradiction. In the case $q = 1$, the sequence $\left \{ \int_{\Om} \ln(|u_{k}|) \right \}$ is bounded, since the sequence $\{I(u_{k})\}$ and $\{\|u_{k}\|\}$ is bounded, using Fatou's Lemma and $\ln(|u_{k}|) \leq u_{k},$ for each $k$, we get
\[ -\infty < \overline{\lim_{k \rightarrow \infty}} \int_{\Om} \ln(|u_{k}|) dx \leq  \int_{\Om} \overline{\lim_{k \rightarrow \infty}} \ln(|u_{k}|)dx =  \int_{\Om} \ln(|u_\la|)dx. \]
which implies $u_\la \not\equiv 0$ . Thus, in both cases we have shown that $u_\la \in X_{+,q}$. So, there exists $0 < t_1 < t_2$ such that $\phi^{\prime}_{u_{\la}}(t_1)= \phi^{\prime}_{u_{\la}}(t_2) = 0$ and $t_{1}u_{\la} \in \mc N^{+}_{\la}$. Then, three cases arise: \\
(i) $t_2 < 1$,\\
(ii) $t_2 \geq 1$ and $\frac{c^p}{p}- \frac{d^{p^*_s}}{p^*_s} < 0$, and \\
(iii) $t_2 \geq 1$ and $\frac{c^p}{p}- \frac{d^{p^*_s}}{p^*_s} \geq 0$.\\
\noi Case (i) Let $h(t) = \phi_{u_{\la}}(t)+ \frac{c^pt^p}{p} - \frac{d^{p^*_s}t^{p^*_s}}{p^*_s}$ for $t >0$. By \eqref{eq8} we get $ h^{\prime}(1) = \phi^{\prime}_{u_{\la}}(1)+c^p-d^{p^*_s} = 0$ and
\begin{equation*}
 h^{\prime}(t_2) = \phi^{\prime}_{u_{\la}}(t_2)+t^p_2c^p-t_2^{p^*_s}d^{p^*_s} = {t_2}^{p}(c^p - t_2^{p^*_s-p}d^{p^*_s}) > t_2^{p}(c^p - d^{p^*_s})
 > 0
\end{equation*}
which implies that $h$ increases on $[t_2,1]$. Then we get
\begin{equation*}
\begin{split}
\inf I(\mc N^+_{\la}) &= \lim I(u_k) \geq \phi_u(1) + \frac{c^p}{p}- \frac{d^{p^*_s}}{p^*_s}  = h(1) > h(t_2)\\
& =\phi_u(t_2) + \frac{c^pt_{2}^{p}}{p}- \frac{d^{p^*_s}t_{2}^{p^*_s}}{p^*_s} \geq \phi_u(t_2) + \frac{t_{2}^{p}}{p} (c^p - d^{p^*_s})\\
& > \phi_u(t_2) > \phi_u(t_1) \geq \inf I(\mc N^+_{\la}),
\end{split}
\end{equation*}
which is a contradiction.\\
\noi Case (ii) In this case, since $\la \in (0, \Lambda)$, we have $(c^p/p - d^{p^*_s}/{p^*_s}) < 0$ and $Sd^p \leq c^p$. Also we see that, for each $u_0 \in \mc N^{+}_{\la}$
\begin{equation*}
\begin{split}
0  < \phi^{\prime \prime}_{u_0}(1)& =(p-1) \|u_0\|^p + q \la \int_{\Om} |u_0|^{1-q} dx - (p^*_s-1) \int_{\Om}|u_0|^{p^*_s}dx \\
 & = (p-1+q) \|u_0\|^p +  (-q-p^*_s+1) \int_{\Om} |u_0|^{p^*_s} dx  \\
\end{split}
\end{equation*}
\noi  which implies $ (p-1+q)\|u_0\|^p  > (q+p^*_s-1)\int_{\Om}|u_0|^{p^*_s}dx =  (q+p^*_s-1)|u_0|^{p^*_s}_{p_s^*} $ \\
\noi or, $ C_{p^*_s}  \leq \left(\frac{p-1+q}{q+p^*_s-1}\right) \|u_0\|^{p-p^*_s}$
\noi or, $ \|u_0\|^{p}  \leq  \left(\frac{p-1+q}{q+p^*_s-1} \right )^{\frac{p}{p^*_s-p}} S^{\frac{p^*_s}{p^*_s-p}}$. Thus, we have
\[ \sup\{ \|u\|^p: u \in \mc N^+_{\la}\} \leq \left(\frac{p}{p^*_s}\right)^{\frac{p}{p^*_s-p}} S^{\frac{p^*_s}{p^*_s-p}} < c^p \leq \sup\{ \|u\|^p: u \in \mc N^+_{\la}\},\]
which gives a contradiction. Consequently, in case (iii)  we have
$$ \inf I(\mc N^+_{\la}) = I(u_\la)+\frac{c^p}{p}-\frac{d^{p^*_s}}{p^*_s} \geq I(u_\la) = \phi_{u_\la}(1) \geq \phi_{u_\la}(t_1) \geq \inf I(\mc N^+_{\la}) .$$
Clearly, this holds when $t_1 = 1$ and $(c^p/p - d^{p^*_s}/{p^*_s}) = 0$ which yields $c=0$ and $u_\la \in \mc N^+_{\la}$. Thus, $u_k \rightarrow u_\la$ strongly in $X_0$ as $k \rightarrow \infty$ and $I(u_\la) = \inf I(\mc N^+_{\la})$. \QED
\end{proof}

\begin{Proposition}\label{prp4.2}
$u_\la$  is a positive weak solution of ($P_\la$).
\end{Proposition}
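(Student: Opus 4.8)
The plan is to show that the minimizer $u_\la\in\mc N_{\la}^{+}$ constructed in the previous proposition satisfies the weak formulation of $(P_\la)$ by testing the one-sided variational inequality \eqref{eq4.2} of Lemma \ref{lem3.6} against small \emph{signed} perturbations $u_\la+\epsilon\psi$ of $u_\la$. The mechanism that makes signed perturbations admissible is Lemma \ref{le05}, which yields $u_\la\ge\phi=\eta\phi_1>0$ in $\Om$, so that on the support of any test function $u_\la$ stays uniformly away from $0$.

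First I would fix $\psi\in C^{\infty}_c(\Om)$, set $K=\mathrm{supp}\,\psi$, and assume $\psi\not\equiv 0$ (the zero case being trivial). Since $\phi_1$ is positive and continuous in $\Om$, it is bounded below by a positive constant on $K$, so by Lemma \ref{le05} there is $c_K>0$ with $u_\la\ge c_K$ a.e.\ on $K$. Hence, for every real $\epsilon$ with $|\epsilon|<c_K\|\psi\|_{L^\infty}^{-1}$, the function $w_\epsilon:=u_\la+\epsilon\psi\in X_0$ is nonnegative a.e.\ (it equals $u_\la\ge 0$ off $K$, and on $K$ it is $\ge c_K-|\epsilon|\|\psi\|_{L^\infty}>0$), so $w_\epsilon\in X_{+}$; moreover $u_\la^{-q}w_\epsilon\in L^{1}(\Om)$ by Lemma \ref{lem3.6}, since $u_\la^{-q}|\psi|\le c_K^{-q}\|\psi\|_{L^\infty}$ on $K$ and $u_\la^{1-q}\in L^{1}(\Om)$ by the embedding $X_0\hookrightarrow L^{1-q}(\Om)$.

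Then I would insert $w_\epsilon$ into \eqref{eq4.2}. Using that $w\mapsto\int_Q\frac{|u_\la(x)-u_\la(y)|^{p-2}(u_\la(x)-u_\la(y))(w(x)-w(y))}{|x-y|^{n+sp}}\,dxdy$ is linear in $w$, and that $u_\la\in\mc N_{\la}$ gives $\int_Q\frac{|u_\la(x)-u_\la(y)|^{p}}{|x-y|^{n+sp}}\,dxdy=\int_\Om(\la u_\la^{-q}+u_\la^{\alpha})u_\la\,dx$, the $w=u_\la$ contribution in \eqref{eq4.2} cancels and the inequality collapses to
\[ \epsilon\left(\int_Q\frac{|u_\la(x)-u_\la(y)|^{p-2}(u_\la(x)-u_\la(y))(\psi(x)-\psi(y))}{|x-y|^{n+sp}}\,dxdy-\int_\Om(\la u_\la^{-q}+u_\la^{\alpha})\psi\,dx\right)\ \ge\ 0. \]
Since this holds for all $\epsilon$ in a neighbourhood of $0$, the bracketed quantity must vanish; as $\psi\in C^{\infty}_c(\Om)$ was arbitrary, $u_\la$ satisfies the weak formulation in the definition of weak solution, and positivity $u_\la>0$ in $\Om$ is Lemma \ref{le05}. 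Hence $u_\la$ is a positive weak solution of $(P_\la)$.

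The only delicate point I expect is the admissibility/integrability of the perturbation, namely that $w_\epsilon=u_\la+\epsilon\psi$ lies in $X_{+}$ and $u_\la^{-q}w_\epsilon\in L^{1}(\Om)$ for small $|\epsilon|$, which is exactly where the pointwise bound $u_\la\ge\phi$ of Lemma \ref{le05} together with the local positivity of $\phi_1$ is used. One could instead avoid invoking continuity of $\phi_1$ by testing \eqref{eq4.2} with $(u_\la+\epsilon\psi)^{+}$ as $\epsilon\downarrow 0^{+}$ and controlling the surplus term $\int_Q\frac{|u_\la(x)-u_\la(y)|^{p-2}(u_\la(x)-u_\la(y))((u_\la+\epsilon\psi)^{-}(x)-(u_\la+\epsilon\psi)^{-}(y))}{|x-y|^{n+sp}}\,dxdy$ by $\epsilon\cdot o(1)$, using that $|\{u_\la+\epsilon\psi<0\}|\to 0$ and the monotonicity of $t\mapsto|t|^{p-2}t$; this route is heavier, so the first one is preferable.
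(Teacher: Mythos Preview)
Your proof is correct and close in spirit to the paper's, but the mechanism is genuinely different at the key step. Both arguments start from Lemma~\ref{le05} to secure a uniform lower bound $u_\la\ge \beta>0$ on $K=\mathrm{supp}\,\psi$, so that $u_\la+\epsilon\psi\in X_+$ for all small $|\epsilon|$. From there the paper does \emph{not} use \eqref{eq4.2}; instead it invokes Lemma~\ref{le03}(1) (extended to signed $\psi$ via the lower bound) to obtain the energy inequality $I(u_\la+\epsilon\psi)\ge I(u_\la)$, and then differentiates this at $\epsilon=0^+$ to get the weak formulation as a one-sided inequality, concluding by replacing $\psi$ with $-\psi$. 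You instead feed $w=u_\la+\epsilon\psi$ into the already-linear variational inequality \eqref{eq4.2}, and cancel the $w=u_\la$ contribution using the Nehari identity $\phi'_{u_\la}(1)=0$, leaving $\epsilon$ times the weak formulation $\ge 0$ for both signs of $\epsilon$.

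Your route is arguably cleaner: it avoids re-doing the differentiation of the singular term $G_q$ (which Lemma~\ref{lem3.6} has already absorbed) and makes explicit the role of the Nehari constraint. The paper's route is slightly more self-contained in that it only uses minimality through Lemma~\ref{le03}(1) and does not need to invoke \eqref{eq4.2} or the Nehari identity separately. Either way the delicate point is the same one you flagged---admissibility of $u_\la+\epsilon\psi$ in $X_+$---and it is handled identically via Lemma~\ref{le05}.
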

\begin{proof}
Let $\psi \in C^{\infty}_c(\Om)$. By Lemma \ref{le05}, since $\phi > 0$, we can find $ \beta >0$ such that $u_\la \geq \beta$ on support of $\psi$. Then $u_\la+\e \psi\ge0$, for small $\e$.  With similar reasoning as  in the proof of Lemma \ref{le03}, $ I(u_\la+\epsilon \psi) \geq I(u_\la)$ for sufficiently small $\epsilon >0$. Then we have
\begin{equation*}
\begin{split}
0 & \leq \lim \limits_{\epsilon \rightarrow 0} \frac{I(u_\la+\epsilon\psi) - I(u_\la)}{\epsilon}\\
&= \int_Q \frac{|u_\la(x)-u_\la(y)|^{p-2}(u_\la(x)-u_\la(y))(\psi(x)-\psi(y))}{|x-y|^{n+ps}}~ dxdy - \la \int_\Om u_{\la}^{-q}\psi dx - \int_\Om u_{\la}^{\alpha}\psi ~dx.
\end{split}
\end{equation*}
Since $\psi \in C^{\infty}_c(\Om)$ is arbitrary, we conclude that $u_\la$ is a positive weak solution of $(P_\la)$.\QED
\end{proof}
We recall the following comparison principle from \cite{EPL}.
\begin{Lemma}\label{lem4.3}
Let $u,v \in X_0$ are such that $u\ge v$ in $\mathbb R^n\backslash\Om$ and
\[\int_{Q} \left(|u(x)-u(y)|^{p-2}(u(x)-u(y)) - |v(x)-v(y)|^{p-2}(v(x)-v(y))\right)\frac{(\psi(x)-\psi(y))}{|x-y|^{n+ps}}~dxdy\ge 0\]
for all non-negative $\psi \in X_0$.
Then $u\ge v$ in $\Om$.
\end{Lemma}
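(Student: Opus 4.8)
The plan is to run the classical weak comparison argument for the fractional $p$-Laplacian by testing the hypothesis against $\psi = (v-u)^+$ and exploiting the strict monotonicity of the map $t \mapsto |t|^{p-2}t$. First I would check that this choice is admissible: since $u \ge v$ in $\mb R^n \setminus \Om$, the function $(v-u)^+$ vanishes on $\mb R^n \setminus \Om$, so $(v-u)^+ \in X_0$ and $(v-u)^+ \ge 0$, making it a legitimate competitor in the stated inequality.

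Write $w = v - u$, $\psi = w^+$, $a = u(x)-u(y)$, $b = v(x)-v(y)$, and $g(t) = |t|^{p-2}t$, so that the integrand of the hypothesis becomes $(g(a)-g(b))(\psi(x)-\psi(y))$ with $\psi(x)-\psi(y)=w^+(x)-w^+(y)$ and $a-b = -(w(x)-w(y))$. The crux is the pointwise inequality
\[
(g(a)-g(b))\,(w^+(x)-w^+(y)) \le -c_p\,|w^+(x)-w^+(y)|^{p},
\]
valid for $p \ge 2$ with $c_p = 2^{2-p}$, which I would establish exactly in the spirit of \eqref{eq3.8}--\eqref{eq3.10} by splitting $Q$ according to $\Om_1 = \{v \ge u\}$ and $\Om_2 = \{v \le u\}$ and using the elementary bound $|a-b|^p \le 2^{p-2}(g(a)-g(b))(a-b)$. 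On $\Om_1\times\Om_1$ one has $w^+ = w$, so the left side equals $-(g(a)-g(b))(a-b)$ and the bound is immediate; on $\Om_2\times\Om_2$ both sides vanish.

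The mixed blocks are the main obstacle. On $\Om_1\times\Om_2$ one has $w^+(x)-w^+(y) = w(x) \ge 0$ while $a - b = -(w(x)-w(y)) \le -w(x) \le 0$, hence $g(a)-g(b)\le 0$; combining $|g(a)-g(b)| \ge c_p|a-b|^{p-1}$ (a consequence of the elementary bound, since $(g(a)-g(b))(a-b)=|g(a)-g(b)|\,|a-b|$) with $|a-b| = w(x)-w(y) \ge w(x) = |w^+(x)-w^+(y)|$ and $p-1\ge 1$ yields the claimed pointwise estimate, and $\Om_2\times\Om_1$ is symmetric. I expect the careful bookkeeping of signs on these off-diagonal regions, where the truncation $w^+$ differs from $w$, to be the only delicate point.

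Integrating the pointwise inequality over $Q$ against $|x-y|^{-(n+ps)}$ and comparing with the hypothesis gives
\[
0 \le \int_Q (g(a)-g(b))\frac{\psi(x)-\psi(y)}{|x-y|^{n+ps}}\,dx\,dy \le -c_p\,\|(v-u)^+\|^{p},
\]
so $\|(v-u)^+\| = 0$. Hence $(v-u)^+ \equiv 0$, i.e. $u \ge v$ a.e., and in particular $u \ge v$ in $\Om$, which completes the proof. The argument is written for $p \ge 2$, consistent with the algebraic inequality used throughout the paper; for $1<p<2$ one would replace the elementary bound by its sub-quadratic analogue and argue through the corresponding weighted estimate.
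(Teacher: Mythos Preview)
Your proof is correct and follows the same overall strategy as the paper: both arguments test the hypothesis with $\psi=(v-u)^+$ and exploit the monotonicity of $t\mapsto|t|^{p-2}t$ to force $\|(v-u)^+\|=0$. The difference lies in the monotonicity device. The paper invokes the identity
\[
|b|^{p-2}b-|a|^{p-2}a=(p-1)(b-a)\int_0^1|a+t(b-a)|^{p-2}\,dt,
\]
which immediately shows that the integrand $(g(a)-g(b))(w^+(x)-w^+(y))$ is nonpositive pointwise for every $p>1$, without any block decomposition; the conclusion then follows in one line. You instead use the algebraic inequality $|a-b|^p\le 2^{p-2}(g(a)-g(b))(a-b)$ together with the $\Om_1\times\Om_1$, $\Om_1\times\Om_2$, $\Om_2\times\Om_1$, $\Om_2\times\Om_2$ splitting already employed in Lemma~\ref{le05}. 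Your route yields the quantitative lower bound $c_p\|(v-u)^+\|^p$, which is a bit more than needed, but at the cost of the case analysis and of the restriction $p\ge 2$ (which you acknowledge). The paper's identity covers the full range $1<p<\infty$ with no extra work, so it is the more economical choice here.
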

\begin{proof}
Proof follows by taking $\psi=(v-u)^+$  and using the equality
\[|b|^{p-2}b -|a|^{p-2} a = (p-1) (b-a) \int_0^1 |a+t(b-a)|^{p-2} dt.\] \QED
\end{proof}
\noi As a consequence, we have
\begin{Lemma}\label{lem4.4}
$\La_1 <\infty.$
\end{Lemma}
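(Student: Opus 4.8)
The plan is to show that if $\lambda$ is too large, then $(P_\lambda)$ cannot have a weak solution at all; since $\Lambda_1$ is (by Theorems \ref{thm3.2}--\ref{thm2.4}) a value of $\lambda$ below which solutions exist, this forces $\Lambda_1<\infty$. The natural device is to test the equation against the first eigenfunction $\phi_1$ of $(-\De_p)^s$, exactly as in the classical Laplacian case, and compare against the eigenvalue $\lambda_1$.

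First I would suppose, for contradiction, that $(P_\lambda)$ has a positive weak solution $u\in X_{+,q}$ for some very large $\lambda$. Testing the weak formulation with $\psi=\phi_1$ (after the usual approximation argument, since $\phi_1\in X_0$ is bounded and positive, and $u^{-q}\phi_1\in L^1(\Om)$ by Lemma \ref{lem3.6}-type reasoning), and separately testing the eigenvalue equation $(-\De_p)^s\phi_1=\lambda_1\phi_1^{p-1}$ with $\psi=u$, one gets
\[
\int_Q \frac{|u(x)-u(y)|^{p-2}(u(x)-u(y))(\phi_1(x)-\phi_1(y))}{|x-y|^{n+sp}}\,dxdy=\int_\Om(\lambda u^{-q}+u^{\alpha})\phi_1\,dx,
\]
\[
\int_Q \frac{|\phi_1(x)-\phi_1(y)|^{p-2}(\phi_1(x)-\phi_1(y))(u(x)-u(y))}{|x-y|^{n+sp}}\,dxdy=\lambda_1\int_\Om \phi_1^{p-1}u\,dx.
\]
The key algebraic step is then a pointwise inequality allowing these two bilinear pairings to be compared: either using the "discrete Picone inequality" for the fractional $p$-Laplacian (as in \cite{EPL}), which gives directly $\int_\Om(\lambda u^{-q}+u^{\alpha})\phi_1\,dx\le \lambda_1\int_\Om\phi_1^{p-1}u\,dx$ after choosing the right test functions, or, more elementarily, by showing that for $\lambda$ large the function $u$ is a strict supersolution of the eigenvalue problem and deriving a contradiction with the simplicity/positivity of $\lambda_1$. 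In either route one arrives at an inequality of the form
\[
\lambda\int_\Om \phi_1^{1-q}\,dx\le \int_\Om(\lambda u^{-q}+u^{\alpha})\phi_1\,dx\le \lambda_1\int_\Om \phi_1^{p-1}u\,dx,
\]
but this is impossible: a Picone-type argument bounds the right side by a $\lambda$-independent quantity, or, cruder still, one uses that $\lambda u^{-q}+u^{\alpha}\ge c\,\lambda^{\theta}$ pointwise (the function $t\mapsto \lambda t^{-q}+t^{\alpha}$ has a positive minimum comparable to a positive power of $\lambda$), so the left side of the original balance grows like a power of $\lambda$ while the bilinear form evaluated at a solution is controlled by $\lambda_1$ and the $L^1$-norms of fixed eigenfunctions — contradiction for $\lambda$ sufficiently large. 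This yields a finite $\bar\Lambda$ beyond which no solution exists, hence $\Lambda_1\le\bar\Lambda<\infty$.

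The main obstacle I expect is the rigorous justification of testing the singular equation against $\phi_1$ and the Picone-type comparison in the nonlocal, nonlinear ($p\ne2$) setting: one must verify that $u\ge c>0$ on compact subsets (available from Lemma \ref{le05}, since any solution dominates the subsolution $\phi=\eta\phi_1$), handle the integrability of the singular term against $\phi_1$ near $\partial\Om$, and invoke the correct form of the fractional Picone inequality from \cite{EPL} or \cite{s3,GG}. Once the comparison inequality is in place, the contradiction for large $\lambda$ is immediate from the growth of $\min_{t>0}(\lambda t^{-q}+t^{\alpha})$ in $\lambda$, so the quantitative part is routine.
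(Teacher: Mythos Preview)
Your second route—observe that $\min_{t>0}(\la t^{-q}+t^{\alpha})/t^{p-1}$ is a positive power of $\la$, so for $\la$ large any solution $u_\la$ is a strict supersolution of $(-\De_p)^s u=(\la_1+\e)u^{p-1}$, and then contradict the isolatedness of $\la_1$—is exactly the paper's argument. The paper makes the last step concrete: it takes $\underline{u}=r\phi_1$ as a subsolution of $(P_\e)$, uses the $L^\infty$ bound on $u_\la$ (Theorem \ref{reg1}) to arrange $\underline{u}\le u_\la$, runs a monotone iteration $(-\De_p)^s u_n=(\la_1+\e)u_{n-1}^{p-1}$ sandwiched between $\underline{u}$ and $u_\la$ via the weak comparison Lemma \ref{lem4.3}, and passes to the limit to produce a positive solution at eigenvalue $\la_1+\e$, contradicting simplicity/isolatedness. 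Your sketch skips this construction but has the same skeleton.

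Your first route (Picone) is a legitimate and in fact cleaner alternative, but as written it has a gap you should be aware of. Testing the equation with $\phi_1$ and the eigenvalue equation with $u$ produces two \emph{different} bilinear forms, $|u(x)-u(y)|^{p-2}(u(x)-u(y))(\phi_1(x)-\phi_1(y))$ versus $|\phi_1(x)-\phi_1(y)|^{p-2}(\phi_1(x)-\phi_1(y))(u(x)-u(y))$, and for $p\ne 2$ there is no pointwise inequality between them; the displayed chain $\int(\la u^{-q}+u^\alpha)\phi_1\le \la_1\int\phi_1^{p-1}u$ does not follow. The correct Picone argument tests the equation for $u$ with $w=\phi_1^{p}/u^{p-1}$ (or an $\e$-regularized version), yielding
\[
\int_\Om(\la u^{-q-p+1}+u^{\alpha-p+1})\phi_1^{p}\,dx \le \|\phi_1\|^p=\la_1,
\]
which, combined with the same pointwise lower bound $\la t^{-q-p+1}+t^{\alpha-p+1}>\la_1$ for $\la$ large, gives the contradiction directly without iteration. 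So the Picone route works and is shorter than the paper's, but only with the right test function; the rest of your plan (integrability of the singular term, lower bound from Lemma \ref{le05}) is fine.
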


\begin{proof} Suppose $\La_1=\infty$. Then from Proposition \ref{prp4.2}, $(P_\la)$ has a solution for all $\la$. Now choose $\la$ large enough such that
\[\la t^{-q} + t^{p_{s}^{*}-1} > (\la_1 +\e) t^{p-1}, \; \text{for all}\; t\in (0,\infty).\]
Then $\oline{u}:=u_\la$ is a super solution of the eigenvalue problem
\[  (P_{\e})\quad u\in X_0; \; \text{and}\;
(-\De_p)^s u = (\la_1 +\e) |u|^{p-2} u \; \text{in}\; \Om.\]
Also we can choose $r$ small such that $\uline{u}:=r\phi_1$ is a subsolution of  $(P_{\e})$. Then by the boundedness of $u_\la$ (see Theorem \ref{reg1}) and $\phi_1$, we can choose $r$ small such that $\uline{u}\le \overline{u}.$

\noi Now, we consider the monotone iterations
\begin{align*}
u_0 = &r \phi_1\\
u_n\in  X_0; \; \text{and}\;
(-\De_p)^s u_n &= (\la_1 +\e) |u_{n-1}|^{p-2} u_{n-1} \; \text{in}\; \Om.
\end{align*}
Then by the weak comparison Lemma \ref{lem4.3}, we get
\[r\phi_1(x)\le u_1(x)\le u_2(x)\le...\le u_{n-1}(x)\le u_n(x)\le ....\le u_\la(x), \; \forall x\in \Om\]
Therefore, the sequence $\{u_n\}$ is bounded in $X_0$ and hence has a weakly convergent subsequence $\{u_n\}$ that converges to $u_0$. Thus, $u_0$ is a solution of $(P_{\e})$. Since $\e>0$ is arbitrary, we get a contradiction to the simplicity and isolatedness of $\la_1$.
\end{proof}

\section{Existence of minimizer on $\mc N^{-}_{\la}$}
In this section we show the existence of second solution for $(P_\la)$ in the subcritical case. We assume $\alpha<p_s^*-1$.
\begin{Proposition}
For all $\la\in (0,\La)$, there exist $v_\la \in \mc N_{\la}^{-}$ satisfying $I(v_\la) = \inf\limits_{v\in \mc N_{\la}^{-}} I(v)$.
\end{Proposition}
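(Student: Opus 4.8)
The strategy mirrors the proof on $\mathcal N_\lambda^+$ but with the roles of minimum/maximum of the fiber map reversed, and with the extra care that on $\mathcal N_\lambda^-$ one cannot have $v_\lambda \equiv 0$ thanks to the lower bound $\inf\{\|v\| : v \in \mathcal N_\lambda^-\} > 0$ from \eqref{eq3n1}. First I would take a minimizing sequence $\{v_k\} \subset \mathcal N_\lambda^-$ with $I(v_k) \to \inf I(\mathcal N_\lambda^-)$. By Lemma \ref{le01} this infimum is finite, and by \eqref{eq3n2} the sequence $\{v_k\}$ is bounded in $X_0$ (here we use $I(v_k) \le M$ for large $k$); hence up to a subsequence $v_k \rightharpoonup v_\lambda$ weakly in $X_0$ and, by the compact embedding $X_0 \hookrightarrow L^r(\Om)$ for $r < p^*_s$ (recall $\alpha < p^*_s - 1$, so $\alpha + 1 < p^*_s$), we have $v_k \to v_\lambda$ strongly in $L^{\alpha+1}(\Om)$ and in $L^{1-q}(\Om)$.

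Next I would show $v_\lambda \not\equiv 0$. From $v_k \in \mathcal N_\lambda^-$ and the second inequality in \eqref{eq3n1}, $\|v_k\| \ge c_0 > 0$ for all $k$. If $v_\lambda \equiv 0$, then $A(v_k) = \int_\Om |v_k|^{1-q} \to 0$ and $B(v_k) = \int_\Om |v_k|^{\alpha+1} \to 0$; but $\phi_{v_k}'(1) = 0$ forces $\|v_k\|^p = \lambda A(v_k) + B(v_k) \to 0$, contradicting $\|v_k\| \ge c_0$. (In the case $q = 1$ the argument is slightly different: the term $\lambda \int_\Om \ln|v_k|$ must be controlled, but since $\{I(v_k)\}$ and $\{\|v_k\|\}$ are bounded and $\ln|v_k| \le |v_k|$, Fatou's lemma gives $\int_\Om \ln|v_\lambda| > -\infty$, so $v_\lambda \not\equiv 0$; one also needs $X_{+,1} \neq \emptyset$ in that case.) Thus $v_\lambda \in X_{+,q} \setminus \{0\}$, and by Lemma \ref{lem3.2} there is a unique $t_2 = t_2(v_\lambda) > 0$ with $t_2 v_\lambda \in \mathcal N_\lambda^-$, realizing $\max_{t \ge t_{\max}} \phi_{v_\lambda}(t)$; in particular $I(t_2 v_\lambda) \ge I(t v_\lambda)$ for all $t > 0$.

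Then I would prove strong convergence $v_k \to v_\lambda$ in $X_0$. Suppose not, so $\|v_k - v_\lambda\| \to c > 0$. By Brezis--Lieb, $\|v_k\|^p = \|v_\lambda\|^p + c^p + o(1)$, while $A(v_k) \to A(v_\lambda)$ and $B(v_k) \to B(v_\lambda)$ by compactness; hence $\phi_{v_k}'(1) \to \phi_{v_\lambda}'(1) + c^p$, and since $\phi_{v_k}'(1) = 0$ we get $\phi_{v_\lambda}'(1) = -c^p < 0$. Because $\phi_{v_\lambda}$ decreases on $(0,t_1)$, increases on $(t_1,t_2)$ and decreases on $(t_2,\infty)$ (Lemma \ref{lem3.2}), $\phi_{v_\lambda}'(1) < 0$ forces either $1 < t_1$ or $1 > t_2$. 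In the first subcase, using weak lower semicontinuity one gets $\inf I(\mathcal N_\lambda^-) = \lim I(v_k) = \phi_{v_\lambda}(1) + c^p/p > \phi_{v_\lambda}(1) \ge \phi_{v_\lambda}(t_1) > \phi_{v_\lambda}(t_2) \ge \inf I(\mathcal N_\lambda^-)$ after comparing with $t_2 v_\lambda \in \mathcal N_\lambda^-$ — a contradiction. For the subcase $t_2 < 1$ I would introduce the auxiliary function $f(t) = \phi_{v_\lambda}(t) + \frac{c^p t^p}{p}$: from $\phi_{v_\lambda}'(1) = -c^p$ we have $f'(1) = 0$, and since $t_2 < 1$ and $\phi_{v_\lambda}'(t_2) = 0$, $f'(t_2) = t_2^{p-1}c^p > 0$; but $f'(t) = t^{p-1}(\|v_\lambda\|^p + c^p) - \lambda t^{-q}A(v_\lambda) - t^\alpha B(v_\lambda)$ has, after dividing by $t^{-q}$, the same shape ($m$-type function minus constant) as in Lemma \ref{lem3.2}, so $f'$ changes sign at most twice on $(0,\infty)$ from $+$ to $-$ to (possibly) nothing; combined with $f'(t_2) > 0 = f'(1)$ this forces $f$ to be increasing on $[t_2, 1]$. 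Then $\inf I(\mathcal N_\lambda^-) = I(v_\lambda) + c^p/p = f(1) > f(t_2) \ge \phi_{v_\lambda}(t_2) \ge \inf I(\mathcal N_\lambda^-)$ — again a contradiction. Hence $c = 0$, $v_k \to v_\lambda$ strongly, so $v_\lambda \in \mathcal N_\lambda$ with $\phi_{v_\lambda}''(1) \le 0$; by the Corollary ($\mathcal N_\lambda^0 = \{0\}$ on $(0,\Lambda_1)$) and $v_\lambda \neq 0$ we get $\phi_{v_\lambda}''(1) < 0$, i.e. $v_\lambda \in \mathcal N_\lambda^-$, and $I(v_\lambda) = \inf I(\mathcal N_\lambda^-)$.

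The main obstacle is the sign analysis of the auxiliary function $f$ in the subcase $t_2 < 1$: one must argue carefully that $f'$ inherits the ``at most two sign changes, from positive to negative'' structure of the fiber-map derivative so that $f'(t_2) > 0 = f'(1)$ really implies monotonicity of $f$ on $[t_2,1]$ rather than some oscillation; this is exactly where the restriction $\alpha < p^*_s - 1$ (ensuring compactness, so that $c$ comes only from the Dirichlet form and not from a concentration term) is used, and where the structural Lemma \ref{lem3.2} must be invoked for the perturbed coefficients $(\|v_\lambda\|^p + c^p, A(v_\lambda), B(v_\lambda))$.
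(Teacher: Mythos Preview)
Your overall architecture matches the paper's: minimizing sequence, boundedness from \eqref{eq3n2}, weak limit $v_\lambda\not\equiv 0$, then a contradiction via the auxiliary function $f(t)=\phi_{v_\lambda}(t)+c^pt^p/p$ in the event that $v_k\not\to v_\lambda$ strongly. The handling of the subcase $t_2<1$ is correct and coincides with the paper's argument.

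The gap is in your subcase $1<t_1$. The chain
\[
\inf I(\mc N_\lambda^-)=\phi_{v_\lambda}(1)+\tfrac{c^p}{p}>\phi_{v_\lambda}(1)\ge \phi_{v_\lambda}(t_1)>\phi_{v_\lambda}(t_2)\ge \inf I(\mc N_\lambda^-)
\]
fails at $\phi_{v_\lambda}(t_1)>\phi_{v_\lambda}(t_2)$: by the fiber-map analysis $t_1$ is the local minimum and $t_2$ the local maximum, so $\phi_{v_\lambda}(t_1)<\phi_{v_\lambda}(t_2)$. You have transplanted the $\mc N_\lambda^+$ argument (where the comparison was with $t_1u\in\mc N_\lambda^+$) without adjusting for the fact that here the relevant competitor is $t_2v_\lambda\in\mc N_\lambda^-$. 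No simple reshuffling of the inequalities rescues this subcase.

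The paper avoids the split altogether by also passing to the limit in the \emph{second} derivative condition. Since each $v_k\in\mc N_\lambda^-$ satisfies $\phi''_{v_k}(1)<0$, Brezis--Lieb on the norm term together with the compact convergence of $A(v_k)$ and $B(v_k)$ gives
\[
0\;\ge\;\lim_{k\to\infty}\phi''_{v_k}(1)\;=\;\phi''_{v_\lambda}(1)+(p-1)c^p,
\]
hence $\phi''_{v_\lambda}(1)<0$. Combined with $\phi'_{v_\lambda}(1)<0$, this places $1$ strictly to the right of the local maximum, i.e.\ forces $t_2<1$ directly, so only the subcase you already handled correctly is needed. Incorporating this one extra line eliminates the problematic case and completes your proof.
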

\begin{proof}
 Assume $0<q\leq1$ and $\la \in (0, \Lambda)$. We will show that there exists $v_{\la} \in \mc N_{\la}^{-}$ with $I(v_{\la}) = \inf I(\mc N_{\la}^{-})$.  Let $\{v_{k}\} \subset \mc N_{\la}^{-}$ be a sequence such that $\lim\limits_{k \rightarrow \infty}I(v_{k}) = \inf I(\mc N_{\la}^{-})$. Using Lemma \ref{le01}, we can assume that $v_{k} \rightharpoonup v_{\la}$ weakly as $k \rightarrow \infty$ in $X_0$. We claim that $v_{\la} \in X_{+,q}$. When $ 0<q<1 $, if $v_{\la}=0$ then $\{v_{k}\}$ converges strongly to $0$, which contradicts Lemma \ref{le01}. If $q=1$, we similarly have $- \infty < \int_{\Om} \ln(|v_{k}|) dx$ as above. So, by both the cases, we get $v_{\la} \in X_{+,q}$. Next, we claim that $\{ v_{k} \}$ converges strongly to $v_{\la}$ in $X_0$. Suppose not. Then we may assume $ \|v_{k} - v_{\la}\| \rightarrow d >0$, and we have
\begin{enumerate}
\item $\inf I(\mc N_{\la}^{-}) = \lim I(v_{k}) \geq  I(v_{\la}) + d^p/p$.
\item For each $k$,  $\phi^{\prime}_{v_{k}}(1) = 0$ and $\phi^{\prime \prime }_{v_{k}}(1) < 0\implies   \phi^{\prime}_{v_{\la}}(1) + d^p = 0 $ and $\phi^{\prime \prime}_{v_{\la}}(1) + d^p \leq 0 $.
\end{enumerate}
By (2), we have $\phi^{\prime}_{v_{\la}}(1) <0 $ and  $\phi^{\prime \prime}_{v_{\la}}(1) < 0$. So, there exists $ t_2 \in (0,1)$ such that $ \phi^{\prime}_{v_{\la}}(t_2) = 0$ and $\phi^{\prime \prime}_{v_{\la}}(t_2) < 0$. Thus, $t_2v_{\la} \in \mc N_{\la}^{-}$. Define $ g : {\mb R}^+ \rightarrow \mb R$ as $g(t) = \phi_{v_{\la}}(t) + \frac{d^pt^p}{2}$, for $ t>0$. From (2), we get $g^{\prime}(1) = 0$ and since $0 < t_2 < 1$, $g^{\prime}(t_2) = d^pt^{p-1}_2 > 0$. Then, $g$ is increasing on $[t_2, 1].$ Now we obtain
\[ \inf I(\mc N_{\la}^{-}) \geq  I(v_{\la})+\frac{d^p}{p} = \phi_{v_{\la}}(1) +\frac{d^p}{p} = g(1) \geq g(t_2) > \phi_{v_{\la}}(t_2) = I(t_2{v_{\la}}) \geq \inf I(\mc N_{\la}^{-}), \]
which gives a contradiction.  Hence, $d = 0$ and thus,  $\{ v_{k} \}$ converges strongly to $v_{\la}$ in $X_0$. Since  $\la \in (0, \Lambda)$, we have  $\phi^{\prime \prime}_{v_{\la}}(1) < 0$. Therefore, we obtain $v_{\la} \in \mc N_{\la}^{-}$ and $I(v_{\la}) = \inf I(\mc N_{\la}^{-}).$ This completes the proof of this proposition in subcritical case.\QED
\end{proof}
\begin{Proposition}\label{prp5.4}
 For $\la \in (0,\La),$ $v_\la$ is a positive weak solution of ($P_\la$).
\end{Proposition}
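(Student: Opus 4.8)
The plan is to mimic the argument used for the $\mc N_\la^+$ minimizer in Proposition \ref{prp4.2}, exploiting part (2) of Lemma \ref{le03}, which is precisely the $\mc N_\la^-$-version of the variational inequality we need. First I would fix an arbitrary $\psi \in C^\infty_c(\Om)$. By Lemma \ref{le05} we have $v_\la \geq \phi > 0$ in $\Om$, and since $\phi$ is continuous and bounded away from zero on the compact set $\mathrm{supp}\,\psi$ (it is a multiple of $\phi_1$), there exists $\beta > 0$ with $v_\la \geq \beta$ on $\mathrm{supp}\,\psi$. Hence for $|\e|$ small, $v_\la + \e\psi \geq 0$, so $v_\la + \e\psi \in X_+$; moreover $G_q(v_\la + \e\psi)$ differs from $G_q(v_\la)$ only on $\mathrm{supp}\,\psi$, where everything stays bounded below by $\beta/2$, so $v_\la + \e\psi \in X_{+,q}$.

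Next I would use Lemma \ref{le03}(2): for each small $\e > 0$ there is a unique $t_\e > 0$ with $t_\e(v_\la + \e\psi) \in \mc N_\la^-$, and $t_\e \to 1$ as $\e \to 0^+$. Since $v_\la$ minimizes $I$ over $\mc N_\la^-$, we get $I(t_\e(v_\la + \e\psi)) \geq I(v_\la)$. Because $v_\la \in \mc N_\la^-$, the fiber map $\phi_{v_\la}$ has $t=1$ as its (strict) local maximum over $(t_1, \infty)$, so for $\e$ small we also have $I(t_\e v_\la) \leq I(v_\la)$, and thus
\[
\frac{I(t_\e(v_\la+\e\psi)) - I(t_\e v_\la)}{\e} \geq 0 .
\]
Now I would pass to the limit $\e \to 0^+$. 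Using $t_\e \to 1$, the Fréchet-type expansions recorded in the proof of Lemma \ref{lem3.6} (items $(i)$ and $(ii)$ there, applied with $w=\psi$, which is now legitimate because $\psi$ may change sign but on $\mathrm{supp}\,\psi$ we have $v_\la \geq \beta > 0$, so the singular term $v_\la^{-q}\psi$ is in $L^1(\Om)$ and the difference quotients converge by dominated convergence rather than monotone convergence), the limit yields
\[
\int_Q \frac{|v_\la(x)-v_\la(y)|^{p-2}(v_\la(x)-v_\la(y))(\psi(x)-\psi(y))}{|x-y|^{n+sp}}\,dxdy - \la\int_\Om v_\la^{-q}\psi\,dx - \int_\Om v_\la^\alpha \psi\,dx \geq 0 .
\]
Replacing $\psi$ by $-\psi$ (which is equally admissible, since the sign of $\psi$ played no role once $v_\la \geq \beta$ on its support) gives the reverse inequality, hence equality for all $\psi \in C^\infty_c(\Om)$. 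Combined with $v_\la \geq \phi > 0$ in $\Om$, this shows $v_\la$ is a positive weak solution of $(P_\la)$.

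The main obstacle is the limit passage in the term involving $t_\e$: one must justify differentiating $\e \mapsto I(t_\e(v_\la+\e\psi))$ at $\e=0$, i.e.\ show the contribution of $\frac{d}{d\e}t_\e$ vanishes. This is handled exactly as in Lemma \ref{lem3.6}: since $t_\e(v_\la+\e\psi) \in \mc N_\la^-$ for each $\e$, the chain rule gives $\frac{d}{d\e}I(t_\e(v_\la+\e\psi)) = \phi'_{v_\la+\e\psi}(t_\e)\,t_\e' + t_\e\,\langle \cdots\rangle$, and the first summand drops out because $\phi'_{v_\la+\e\psi}(t_\e)=0$ on $\mc N_\la$; what remains, after dividing by $\e$ and letting $\e\to 0^+$ with $t_\e\to 1$, is precisely the desired directional derivative of $I$ at $v_\la$. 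The only delicate point — integrability of $v_\la^{-q}\psi$ near a possible zero set of $v_\la$ — is removed at the outset by the lower bound $v_\la \geq \phi$ on $\mathrm{supp}\,\psi$ coming from Lemma \ref{le05}.
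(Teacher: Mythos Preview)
Your proposal is correct and follows essentially the same route as the paper's own proof: fix $\psi\in C_c^\infty(\Om)$, use Lemma~\ref{le05} to get $v_\la\ge\beta>0$ on $\mathrm{supp}\,\psi$, invoke Lemma~\ref{le03}(2) for $t_\e\to 1$, combine the minimality inequality $I(t_\e(v_\la+\e\psi))\ge I(v_\la)$ with $I(v_\la)\ge I(t_\e v_\la)$ to bound the difference quotient $\big(I(t_\e(v_\la+\e\psi))-I(t_\e v_\la)\big)/\e$ below by $0$, pass to the limit, and replace $\psi$ by $-\psi$. One small remark: in your ``main obstacle'' paragraph you discuss the chain rule for $\e\mapsto I(t_\e(v_\la+\e\psi))$, but that is not the quantity whose limit you actually take---the difference quotient you use already has the same $t_\e$ in both terms, so no $t_\e'$ contribution arises and the limit follows directly from $t_\e\to 1$ and the expansions you cite.
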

\begin{proof}
Let $\psi \in C^{\infty}_c(\Om)$. Using Lemma \ref{le05}, since $\phi > 0$ in $\Om$, we can find $ \beta >0$ such that $v_\la \geq \beta$ on $supp(\psi)$. Also, $t_{\epsilon} \rightarrow 1$ as $\epsilon \rightarrow 0+$, where $t_{\epsilon}$ is the unique positive real number corresponding to $(v_\la+\epsilon \psi)$ such that $t_\epsilon (v_\la+\epsilon \psi) \in \mc N^{-}_{\la}$. Then, by Lemma \ref{le03} we have
\begin{equation*}
\begin{split}
0 & \leq \lim\limits_{\epsilon \rightarrow 0}\frac{I(t_\e(v_\la+\epsilon\psi)) - I(v_\la)}{\epsilon} \leq \lim \limits_{\epsilon \rightarrow 0}\frac{I(t_{\epsilon}(v_\la+\epsilon\psi)) - I(t_{\epsilon} v_\la)}{\epsilon}\\
& = \int_Q \frac{|v_{\la}(x)-v_{\la}(y)|^{p-2}(v_\la(x)-v_\la(y))(\psi(x)-\psi(y))}{|x-y|^{n+sp}} ~dxdy -  \int_\Om \left(\la v_{\la}^{-q}+ v_{\la}^{\alpha}\right) \psi dx.
\end{split}
\end{equation*}
Since $\psi \in C^{\infty}_c(\Om)$ is arbitrary, we conclude that $v_{\la}$ is positive weak solution of $(P_\la)$.\QED
\end{proof}
\noi {\bf Proof of Theorem \ref{thm2.4}:} Proof follows from Proposition \ref{prp4.2} and Proposition \ref{prp5.4}. \QED

\begin{Remark}  To prove the existence of second positive solution in the critical case, one requires to know the classification of exact solutions of the problem
\[(-\De_p)^su = |u|^{p_s^*-2} u \text{ in } \mathbb R^n.\] These are the minimizers of $S$, the best constant of the embedding $X_0$ into $L^{p_s^*}$.  In \cite{s3, bms}, authors obtained several estimates on these minimizers and  conjectu that the solutions are dilations and translations of the radial  function
\[U(x)= \frac{1}{(1+|x|^{p'})^{(N-sp)/p}}, \; x\in \mathbb R^n\]
where $p'=\frac{p}{p-1}$.  In case of $p=2,$  these classifications are proved in \cite{rosen}, where author proved that all solutions are classified by dilations and translations of $U(x)$. Using these classifications, in \cite{TS} it is shown that
\[\sup\{I(u_\la+t U_\e)\; :\; t\ge 0\} < I(u_\la) + \frac{s}{n}S^{\frac{n}{2s}}.\]
where $U_\e= \e^{-(n-2s)/2} U(\frac{x}{\e}), \; x\in \mathbb R^n, \; \e>0$ and $u_\la$ is the minimizer on $\mc N_\la^+$.
Then by carefully analysing the related fiber maps it is shown that $u_\la + t U_\e \in \mc N_\la^-$, for large $t$. From this it follows
\[\inf I(\mc N_\la^-)< I(u_\la) + \frac{s}{n}S^{\frac{n}{2s}}\]
 Then the existence of minimizer   is shown using the analysis of fibering maps in Lemma \ref{lem3.2}.
\end{Remark}

\section{Regularity of  weak solutions}

In this section, we shall prove some regularity properties of positive weak solutions of $P_{\la}$. We begin with the following lemma.

\begin{Lemma}\label{weaksoldef}
Suppose $u$ is a  weak solution of $(P_{\la})$, then for each $w \in X_0$, it satisfies $u^{-q}w \in L^{1}(\Om) $ and
\begin{equation}\label{defweak}
\int_Q \frac{|u(x)-u(y)|^{p-2}(u(x)-u(y))(w(x)-w(y))}{|x-y|^{n+sp}} ~dxdy -  \int_\Om \left(\la  u^{-q} +  u^{\alpha}\right)w dx = 0. \end{equation}
\end{Lemma}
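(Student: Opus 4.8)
The plan is to promote the weak-solution inequality, already known for test functions in $C^\infty_c(\Om)$, to all of $w \in X_0$ by a density and monotone-convergence argument, following the same strategy used in Lemma \ref{lem3.6}. First I would treat the case $w \in X_+$. By Lemma \ref{lem2.1} there is a sequence $\{w_k\} \subset X_0$ with compact support in $\Om$, $0 \le w_1 \le w_2 \le \cdots$, and $w_k \to w$ strongly in $X_0$. Since $u$ is a positive weak solution, $u$ is bounded away from zero on any compact subset of $\Om$ (using $u>0$ in $\Om$ together with the local lower bounds coming from the sub-solution comparison in Lemma \ref{le05}, or directly from positivity), so each $w_k$ can be used as a legitimate test function: $u^{-q} w_k \in L^1(\Om)$ and \eqref{defweak} holds with $w$ replaced by $w_k$. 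The point is then to pass to the limit $k \to \infty$.

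For the limit, the nonlocal bilinear term converges because $w_k \to w$ strongly in $X_0$ and the map $\psi \mapsto \int_Q \frac{|u(x)-u(y)|^{p-2}(u(x)-u(y))(\psi(x)-\psi(y))}{|x-y|^{n+sp}}\,dxdy$ is continuous on $X_0$ (by H\"older with exponents $p$ and $p'$, since $u \in X_0$); the term $\int_\Om u^\alpha w_k\,dx$ converges by the compact embedding $X_0 \hookrightarrow L^{\alpha+1}(\Om)$ (or dominated convergence, as $0 \le w_k \le w$). For the singular term $\int_\Om u^{-q} w_k\,dx$, monotonicity $0 \le u^{-q}w_1 \le u^{-q}w_2 \le \cdots$ and the monotone convergence theorem give $\int_\Om u^{-q} w_k\,dx \to \int_\Om u^{-q} w\,dx$, and since the other three terms in \eqref{defweak} have finite limits, this limit is finite, i.e. $u^{-q}w \in L^1(\Om)$. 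Passing to the limit in the identity \eqref{defweak} for $w_k$ yields \eqref{defweak} for $w$, and the membership $u^{-q}w\in L^1(\Om)$.

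To handle a general $w \in X_0$, I would split $w = w^+ - w^-$ with $w^\pm \in X_+$, apply the previous step to each of $w^+$ and $w^-$ to get $u^{-q}w^\pm \in L^1(\Om)$ (hence $u^{-q}|w| \in L^1(\Om)$, so $u^{-q}w \in L^1(\Om)$), and add the two resulting identities, using linearity of the nonlocal form in its second argument and of all the integrals. This gives \eqref{defweak} for arbitrary $w \in X_0$.

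The main obstacle is the singular term: one must justify that $u^{-q}w_k \to u^{-q}w$ in $L^1$ and that the limit is finite. This is where the monotone convergence theorem is essential, and it is the reason Lemma \ref{lem2.1} is stated with the monotonicity $0 \le w_1 \le w_2 \le \cdots$; without it one would only get an inequality, as in Lemma \ref{lem3.6}. A secondary point requiring care is the legitimacy of $w_k$ as a test function — i.e. that the original weak formulation (a priori only for $\psi \in C^\infty_c(\Om)$) extends to compactly supported $w_k \in X_0$ — which follows by a further density argument approximating $w_k$ by $C^\infty_c(\Om)$ functions on a fixed compact neighborhood of $\mathrm{supp}(w_k)$ where $u$ is bounded below, so the singular term is uniformly integrable along that approximation.
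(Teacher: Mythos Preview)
Your proposal is correct and matches the paper's proof: both approximate $w \in X_+$ by the monotone compactly-supported sequence of Lemma~\ref{lem2.1}, test the weak-solution identity with each $w_k$, pass to the limit using the monotone convergence theorem for the singular term and continuity (via H\"older and the Sobolev embedding) for the remaining terms, and then handle general $w \in X_0$ by the decomposition $w = w^+ - w^-$. One minor remark: Lemma~\ref{le05} concerns only the Nehari minimizers, not an arbitrary weak solution, so your parenthetical appeal to it is misplaced---but the paper itself does not justify the local lower bound either, and your density-based argument for the legitimacy of $w_k$ as a test function is in fact more careful than what the paper writes.
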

\begin{proof}
Let $u$ be a  weak solution of $(P_{\la})$ and $w \in X_{+}$. By Lemma \ref{lem2.1}, we get a sequence $\{w_k \} \in X_{0}$ such that $\{w_k\} \rightarrow w$ strongly in $X_0$, each $w_k$ has compact support in $
\Om$ and $0 \leq w_1 \leq w_2 \leq \ldots$. Since each $w_k$ has compact support in $\Om$ and $u$ is a positive weak solution of $(P_\la)$, for each $k$ we get
\[  \la \int_\Om u^{-q}w_k dx = \int_Q \frac{|u(x)-u(y)|^{p-2}(u(x)-u(y))(w_k(x)-w_k(y))}{|x-y|^{n+sp}} ~dxdy - \int_\Om u^{\alpha}w_k dx .\]
Using monotone convergence theorem, we get $u^{-q}w \in L^{1}(\Om)$  and
\[\la \int_\Om  u^{-q}w dx = \int_Q \frac{|u(x)-u(y)|^{p-2}(u(x)-u(y))(w(x)-w(y))}{|x-y|^{n+sp}} ~dxdy -\int_\Om u^{\alpha}w dx. \]
If $w \in X_0$ then $w = w^+ - w^-$ and $w^+, w^- \in X_{+}$. Since we proved the lemma for each $w \in X_+$, we obtain the conclusion.\QED
\end{proof}
Before proving our next result, let us recall some estimates or inequalities from \cite{bp}.
\begin{Lemma}\label{A.1}
Let $1<p<\infty$ and $f : \mathbb R \rightarrow \mathbb R$ be a $\text{C}^1$ convex function. If $\tau \geq 0$, $t, \; a, \; b \in \mathbb R$ and $A, \; B >0$ then
\[|f(a)-f(b)|^{p-2}(f(a)-f(b))(A-B) \leq |a-b|^{p-2}(a-b)(A|f^{\prime}(a)|^{p-2}f^{\prime}(a)- B|f^{\prime}(b)|^{p-2}f^{\prime}(b)).\]
\end{Lemma}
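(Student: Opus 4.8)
The plan is to reduce, by a symmetry observation, to the case $a\ge b$, dispose of the trivial equality case, and then exploit the tangent-line characterization of convexity together with the monotonicity and multiplicativity of the map $J_p(\xi):=|\xi|^{p-2}\xi$. I would first record the elementary properties of $J_p$ on $\mathbb R$: it is odd, strictly increasing (its derivative $(p-1)|\xi|^{p-2}$ is positive off the origin and it is continuous through $0$), and multiplicative, $J_p(xy)=J_p(x)J_p(y)$, since $|xy|^{p-2}xy=(|x|^{p-2}x)(|y|^{p-2}y)$. With this notation the claimed inequality reads
\[ J_p(f(a)-f(b))\,(A-B)\;\le\;J_p(a-b)\,\bigl(A\,J_p(f'(a))-B\,J_p(f'(b))\bigr). \]

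Next I would note that the exchange $(a,A)\leftrightarrow(b,B)$ leaves the left-hand side unchanged (both $J_p(f(a)-f(b))$ and $A-B$ flip sign, using that $J_p$ is odd) and likewise leaves the right-hand side unchanged, so we may assume $a\ge b$. If $a=b$ both sides vanish, so assume $a>b$. The tangent-line inequalities for the $C^1$ convex function $f$ at the points $b$ and $a$ give
\[ f'(b)\;\le\;\frac{f(a)-f(b)}{a-b}\;\le\;f'(a). \]
Applying the increasing, multiplicative map $J_p$ and using that $a-b>0$ (so $J_p(a-b)=(a-b)^{p-1}>0$) yields
\[ J_p(a-b)\,J_p(f'(b))\;\le\;J_p(f(a)-f(b))\;\le\;J_p(a-b)\,J_p(f'(a)). \]
Hence $J_p(f(a)-f(b))-J_p(a-b)J_p(f'(a))\le 0$ while $J_p(f(a)-f(b))-J_p(a-b)J_p(f'(b))\ge 0$, and since $A,B>0$,
\[ A\bigl(J_p(f(a)-f(b))-J_p(a-b)J_p(f'(a))\bigr)\;\le\;0\;\le\;B\bigl(J_p(f(a)-f(b))-J_p(a-b)J_p(f'(b))\bigr), \]
which is exactly the asserted inequality after rearranging.

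The argument is essentially elementary convexity; there is no serious obstacle. The only point that needs care is the sign bookkeeping for $J_p$ (the homogeneity relation $J_p((a-b)\xi)=J_p(a-b)J_p(\xi)$ and the direction of the convexity estimates), which the reduction to $a\ge b$ neutralizes. I would also remark that the parameters $\tau\ge 0$ and $t\in\mathbb R$ listed among the hypotheses do not appear in the conclusion and play no role (they are vestigial from the source \cite{bp}).
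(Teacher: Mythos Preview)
Your argument is correct. The paper itself does not prove this lemma; it merely recalls it from Brasco--Parini \cite{bp}, so there is no in-paper proof to compare against. Your route --- reduce to $a\ge b$ by the symmetry of both sides under $(a,A)\leftrightarrow(b,B)$, invoke the tangent-line bounds $f'(b)\le\frac{f(a)-f(b)}{a-b}\le f'(a)$ for a $C^1$ convex $f$, and then push these through the odd, increasing, multiplicative map $J_p(\xi)=|\xi|^{p-2}\xi$ --- is precisely the elementary argument one finds in \cite{bp}, and it is complete. One cosmetic point: your parenthetical justification that $J_p$ is strictly increasing via the derivative $(p-1)|\xi|^{p-2}$ being ``continuous through $0$'' is not quite right for $1<p<2$ (the derivative blows up at the origin), but $J_p$ itself is still continuous and strictly increasing on all of $\mathbb R$, which is all you use. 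Your remark that $\tau$ and $t$ are vestigial is accurate.
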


\begin{Lemma}\label{A.3}
Let $1<p<\infty$ and $g : \mathbb R \rightarrow \mathbb R$ be an increasing function, then we have
\[|G(a)-G(b)|^p \leq |a-b|^{p-2}(a-b)(g(a)-g(b))\]
where $G(t)= \int_0^t g^{\prime}(\tau)^{\frac{1}{p}} d\tau$, for $t \in \mathbb R$.
\end{Lemma}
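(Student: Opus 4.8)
The plan is to reduce the inequality to a one-dimensional convexity estimate and prove it pointwise in the pair $(a,b)$, so that no structure of the ambient space is needed. Fix $a,b \in \mathbb{R}$; without loss of generality assume $a \ge b$, since both sides are symmetric under swapping $a$ and $b$ (the left side obviously, and the right side because $(a-b)(g(a)-g(b))$ is unchanged). With $G(t) = \int_0^t g'(\tau)^{1/p}\,d\tau$ (which is well defined once we note $g$ increasing forces $g' \ge 0$ a.e., so the integrand is real), we must show
\[
\left( \int_b^a g'(\tau)^{1/p}\,d\tau \right)^{\!p} \le (a-b)^{p-1}\bigl(g(a)-g(b)\bigr) = (a-b)^{p-1}\int_b^a g'(\tau)\,d\tau .
\]

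First I would recast this as a statement about the function $h = g'$ on the interval $[b,a]$, namely
\[
\left( \int_b^a h(\tau)^{1/p}\,d\tau \right)^{\!p} \le (a-b)^{p-1}\int_b^a h(\tau)\,d\tau ,
\]
for any nonnegative integrable $h$. This is exactly Hölder's inequality: writing the left integrand as $h(\tau)^{1/p}\cdot 1$ and applying Hölder with exponents $p$ and $p' = p/(p-1)$ gives
\[
\int_b^a h(\tau)^{1/p}\,d\tau \le \left( \int_b^a h(\tau)\,d\tau \right)^{1/p}\left( \int_b^a 1\,d\tau \right)^{1/p'} = \left( \int_b^a h(\tau)\,d\tau \right)^{1/p}(a-b)^{(p-1)/p},
\]
and raising to the $p$-th power yields precisely the claimed bound. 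Undoing the substitution $h = g'$ recovers $|G(a)-G(b)|^p \le |a-b|^{p-2}(a-b)(g(a)-g(b))$ in the case $a \ge b$; the case $a < b$ follows by the symmetry noted above, and the case $a = b$ is trivial since both sides vanish. One should also remark that $G$ is well defined (finite) on $\mathbb{R}$ precisely under the implicit hypothesis that $g'^{1/p}$ is locally integrable, which is the setting in which the statement is used.

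The only genuine subtlety — and the step I would flag as needing care rather than cleverness — is the regularity of $g$: the statement as phrased presupposes $g$ differentiable enough that $\int_0^t g'^{1/p}$ makes sense, and one should either assume $g \in C^1$ (or locally Lipschitz) or interpret $g'$ as the a.e.\ derivative of a monotone function together with the requirement that $g$ be absolutely continuous on compacta so that $g(a)-g(b) = \int_b^a g'$. Granting that, the proof is nothing more than Hölder's inequality on a bounded interval, so there is no real obstacle; the content of the lemma is purely the observation that this is the right elementary inequality to invoke in the subsequent regularity bootstrap.
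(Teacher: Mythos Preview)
Your argument is correct: the inequality is precisely H\"older's inequality applied to the factorization $g'(\tau)^{1/p}\cdot 1$ on the interval $[b,a]$, and your treatment of the symmetry in $(a,b)$ and of the regularity caveat (absolute continuity so that $g(a)-g(b)=\int_b^a g'$) is the right way to make the statement honest.

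As for comparison with the paper: the paper does not actually prove this lemma. It is stated there as a result \emph{recalled} from Brasco and Parini \cite{bp}, with no argument given. The proof in \cite{bp} is exactly the H\"older/Jensen computation you wrote, so your approach coincides with the standard one and there is nothing further to contrast.
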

\begin{Theorem}\label{reg1}
Let $u$ be a positive solution of $(P_{\la})$. Then $u \in L^{\infty}(\Om)$.
\end{Theorem}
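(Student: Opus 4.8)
The plan is to establish $L^\infty$ regularity via a Moser-type iteration adapted to the fractional $p$-Laplacian, using the convexity inequalities of Lemma~\ref{A.1} and Lemma~\ref{A.3} recalled from \cite{bp}. The key observation is that although the right-hand side $\la u^{-q} + u^\alpha$ contains a singular term, Lemma~\ref{le05} guarantees $u \geq \phi = \eta\phi_1 > 0$ on compact subsets, and more importantly $\phi_1$ (hence any positive solution, once we know $u\ge\phi$) is bounded below by a positive constant on interior sets; combined with $0 < q \le 1$, the singular term $\la u^{-q}$ is in fact \emph{bounded} wherever $u$ is bounded below, so near the boundary we only need to control it in an integral sense. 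Since $u \in X_0 \hookrightarrow L^{p_s^*}(\Om)$, the term $u^{-q}$ lies in $L^r(\Om)$ for suitable $r$ (as $q\le 1 < p_s^*$, and $u^{-q}\le C + C u$ pointwise where $u$ is not small, while where $u$ is small the contribution is controlled by the subsolution estimate from Lemma~\ref{le05}), and $u^\alpha$ causes no trouble when $\alpha < p_s^*-1$; the critical case $\alpha = p_s^*-1$ is the delicate one and is handled by the standard trick of truncating the test function.

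First I would fix a truncation level: for $k>0$ set $u_k = \min\{u, k\}$ and use as test function $w = u_k^{p(\beta-1)} u$ (or more precisely a regularized/truncated variant so that $w \in X_0$ and $w\ge 0$), for an exponent $\beta > 1$ to be iterated. Applying the weak formulation \eqref{defweak} from Lemma~\ref{weaksoldef}, the left-hand side bilinear form is bounded below, via Lemma~\ref{A.3} applied to the increasing function $g(t) = t\,\min\{t,k\}^{p(\beta-1)}$, by a multiple of $\|G(u)\|^p$ where $G$ has growth like $u^\beta$; by the fractional Sobolev embedding this dominates $\|u_k^{\beta-1}u\|_{L^{p_s^*}}^p$ up to constants independent of $k$. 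The right-hand side is estimated by $\int_\Om (\la u^{-q} + u^\alpha) u_k^{p(\beta-1)} u\,dx$; using $u^{-q} u \le u^{1-q} \le 1 + u$ (valid since $q\le 1$) and $u^\alpha u = u^{\alpha+1}$ with $\alpha+1 \le p_s^*$, one bounds this by $C\int_\Om (1 + u^{p_s^*}) u_k^{p(\beta-1)}\,dx$, and then splits the integral over $\{u \le M\}$ and $\{u > M\}$: on the first set it is finite and controlled by $k^{p(\beta-1)}$-independent data after the split is arranged correctly, while on the second, for the critical exponent, absolute continuity of the integral lets one absorb a small multiple of the Sobolev term into the left-hand side provided $M$ is chosen large. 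This yields the starting-point estimate $u \in L^{p_s^*\beta_0}$ for some $\beta_0 > 1$.

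Then I would iterate: having $u \in L^{p_s^* \beta_{j}}$, the same computation with $\beta = \beta_{j+1} := \beta_j \cdot (p_s^*/p)$ (the gain factor $\chi = p_s^*/p > 1$) gives $u \in L^{p_s^* \beta_{j+1}}$, with the quantitative bound
\begin{equation*}
\|u\|_{L^{p_s^*\beta_{j+1}}} \le (C\beta_{j+1})^{1/(p\beta_{j+1})}\, \|u\|_{L^{p_s^*\beta_j}}^{\,\beta_j/\beta_{j+1}}\cdot(\text{lower order}),
\end{equation*}
and taking the limit $j\to\infty$ the product $\prod_j (C\beta_{j+1})^{1/(p\beta_{j+1})}$ converges (since $\sum_j \log\beta_j / \beta_j < \infty$), so $\|u\|_{L^\infty(\Om)} \le C < \infty$. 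The main obstacle I anticipate is twofold: first, justifying rigorously that the truncated test function $w = u_k^{p(\beta-1)}u$ (and its various regularizations) is an admissible element of $X_0$ for which \eqref{defweak} holds, together with the nonlocal integration-by-parts estimate from Lemma~\ref{A.3} in the fractional setting — this requires care because the nonlocal bilinear form couples $\Om$ with $\mb R^n\setminus\Om$; second, in the critical case $\alpha = p_s^*-1$, setting up the first iteration step so that the borderline term $\int u^{p_s^*} u_k^{p(\beta-1)}$ is genuinely absorbed rather than circularly estimated — this is exactly where one uses the finiteness of $\int_\Om u^{p_s^*}$ coupled with the dominated/absolutely continuous splitting. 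The singular term, perhaps surprisingly, is the \emph{easy} part here precisely because $q \le 1$ makes $u^{1-q}$ sublinear.
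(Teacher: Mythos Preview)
Your approach is essentially the same Moser iteration as the paper's, both lifted from \cite{bp} via Lemmas~\ref{A.1} and~\ref{A.3}; the paper truncates with $u_K=\min\{(u-1)^+,K\}$ and tests with $(u_K+\rho)^\beta-\rho^\beta$ so that all integrals are supported on $\{u\ge1\}$ (where $u^{-q}\le 1$ trivially), while your device $u^{-q}\cdot u=u^{1-q}\le 1+u$ achieves the same control of the singular term by an equally simple route. One correction: your appeals to Lemma~\ref{le05} are misplaced, since that lemma establishes $u\ge\phi$ only for the specific Nehari minimizers $u_\la,v_\la$, not for an arbitrary positive weak solution as in the statement of Theorem~\ref{reg1}; fortunately your actual estimates never use a pointwise lower bound on $u$, so simply drop those sentences and the argument stands.
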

\begin{proof}
Proof here is adopted from  Brasco and Parini \cite{bp}.
Let $\epsilon > 0$ be very small and define
\[f_{\epsilon}(t)= (\epsilon^2+t^2)^{\frac12}\]
which is smooth, convex and Lipschitz. Let $0< \psi \in C_c^{\infty}(\Om)$ and we take $\varphi = \psi|f_{\epsilon^{\prime}}(u)|^{p-2}f_{\epsilon}^{\prime}(u)$ as the test function in \eqref{defweak}. By taking the choices
\[ a=u(x),\;\; b=u(y),\;\; A=\psi(x),\;\; B=\psi(y) \]
in Lemma \ref{A.1}, we get
\begin{equation}
\int_Q \frac{|f_\epsilon(u(x))- f_{\epsilon}(u(y))|^{p-2}(f_\epsilon(u(x))-f_\epsilon(u(y)))(\psi(x)-\psi(y))}{|x-y|^{n+sp}} ~dxdy \leq  \int_\Om \left(|\la  u^{-q} +  u^{\alpha}|\right)|f_\epsilon^{\prime}(u)|^{p-1}\psi dx
\end{equation}
As $t \rightarrow 0$, $f_\epsilon(t) \rightarrow |t|$ and we have $|f_{\epsilon}^{\prime}(t)|\leq 1$. So using Fatou's Lemma, we let $\epsilon \rightarrow 0$ in above inequality and get
\begin{equation}\label{infty1}
\int_Q \frac{\mid|u(x)|- |u(y)|\mid^{p-2}(|u(x)|-|u(y)|)(\psi(x)-\psi(y))}{|x-y|^{n+sp}} ~dxdy \leq  \int_\Om \left(|\la  u^{-q} +  u^{\alpha}|\right)\psi~dx,
\end{equation}
for every $0<\psi \in C_c^{\infty}(\Om)$. The above inequality still holds for $0\leq \psi \in X_0$ ( similar proof as of Lemma \ref{weaksoldef}). Now, let us define $u_K= \min\{(u-1)^+, K\}\in X_0$, for $K>0$. For $\beta>0$ and $\rho>0$, we take $\psi = (u_K+ \rho)^{\beta}- \rho^{\beta}$ as test function in \eqref{infty1} and get
\begin{equation}\label{infty2}
\begin{split}
&\int_Q \frac{\mid|u(x)|- |u(y)|\mid^{p-2}(|u(x)|-|u(y)|)((u_K(x)+\rho)^{\beta}-(u_K(y)+\rho)^{\beta})}{|x-y|^{n+sp}} ~dxdy\\
 & \quad \quad\leq  \int_\Om \left(|\la  u^{-q} +  u^{\alpha}|\right)((u_K+\rho)^{\beta}- \rho^{\beta})~dx.
 \end{split}
\end{equation}
Then, by using Lemma \ref{A.3} with the function
\[g(u)= (u_K+\rho)^{\beta},\]
we get
\begin{equation}
\begin{split}
&\int_Q \frac{|(u_K(x)+\rho)^{\frac{\beta+p-1}{p}}- (u_K(y)+\rho)^{\frac{\beta+p-1}{p}}|}{x-y}^{n+sp}~dxdy\\
& \leq \quad \frac{(\beta+p-1)^p}{\beta p^p} \int_Q \frac{\mid|u(x)|- |u(y)|\mid^{p-2}(|u(x)|-|u(y)|)((u_K(x)+\rho)^{\beta}-(u_K(y)+\rho)^{\beta})}{|x-y|^{n+sp}} ~dxdy\\
&\leq \quad \quad  \frac{(\beta+p-1)^p}{\beta p^p} \int_\Om \la | u^{-q}|((u_K+\rho)^{\beta}- \rho^{\beta})~dx +  \int_\Om |u^{\alpha}|((u_K+\rho)^{\beta}- \rho^{\beta})~dx.
\end{split}
\end{equation}
Now, from the support of $u_K$ we have
\begin{equation}
\begin{split}
&\int_\Om \la | u^{-q}|((u_K+\rho)^{\beta}- \rho^{\beta})~dx +  \int_\Om |u^{\alpha}|((u_K+\rho)^{\beta}- \rho^{\beta})~dx\\
& = \int_{\{u\geq1\}} \la | u^{-q}|((u_K+\rho)^{\beta}- \rho^{\beta})~dx +  \int_{\{u \geq 1\} }|u^{\alpha}|((u_K+\rho)^{\beta}- \rho^{\beta})~dx\\
& \leq C_1 \int_{\{u\geq 1\}} (1+|u|^{\alpha}) ((u_K+\rho)^{\beta}- \rho^{\beta})~dx\\
& \leq 2C_1 \int_{\{u\geq 1\}}|u|^{\alpha} ((u_K+\rho)^{\beta}- \rho^{\beta})~dx\\
& \leq 2C_1 |u|^{\alpha}_{p_s^*}\; |(u_K+\rho)^{\beta}|_r
\end{split}
\end{equation}
where $C_1= \max \{\la,1\}$ and $r = \frac{p_s^*}{p_s^*-\alpha}$. By using Sobolev inequality given in Theorem 1 of \cite{maz}, we get
\begin{align*}
 \int_Q \frac{|(u_K(x)+\rho)^{\frac{\beta+p-1}{p}}- (u_K(y)+\rho)^{\frac{\beta+p-1}{p}}|}{{x-y}^{n+sp}}~dxdy
& \geq  \frac{1}{T_{p,s}} |(u_K+\rho)^{\frac{\beta+p-1}{p}}- \rho^{\frac{\beta+p-1}{p}}|_{p^*_s}^{p}\\
& \geq \frac{1}{T_{p,s}}\left( \left( \frac{\rho}{2}\right)^{p-1} |(u_K+\rho)^{\beta_p}|^{p}_{p_s^*} - \rho^{\beta+p-1}|\Om|^{\frac{p}{p_s^*}}\right),
\end{align*}
where $T_{p,s}$ is a nonnegative constant and the last inequality follows from triangle inequality and $(u_K+\rho)^{\beta+p-1}\geq \rho^{p-1}(u_K+\rho)^{\beta}$. Using all these estimates, we now have
\[|(u_K+\rho)^{\frac{\beta}{p}}|^p_{p_s^*} \leq C \left(T_{p,s} \left(\frac{2}{\rho}\right)^{p-1} \left( \frac{(\beta+p-1)^p}{\beta p^p}\right) |u|^{\alpha}_{p_s^*}\; |(u_K+\rho)^{\beta}|_r + \rho^{\beta}|\Om|^{\frac{p}{p_s^*}}\right),\]
where $C=C(p)>0$ is a constant. By convexity of the map $t \mapsto t^p$, we can show that
\[\frac{1}{\beta} \left(\frac{\beta+p-1}{p} \right)^p \geq 1.\]
Using this we can also check that
\[\rho^{\beta}|\Om|^{\frac{p}{p_s^*}} \leq \frac{1}{\beta} \left( \frac{\beta+p-1}{p}\right)^{p} |\Om|^{1-\frac{1}{r}- \frac{sp}{n}} |(u_K+\rho)^{\beta}|_r.\]
Hence we have
\begin{equation}
|(u_K+\rho)^{\frac{\beta}{p}}|^p_{p_s^*} \leq C \frac{1}{\beta} \left( \frac{\beta+p-1}{p}\right)^{p} |(u_K+\rho)^{\beta}|_r \left(
\frac{T_{p,s} |u|^{\alpha}_{p_s^*} }{\rho^{p-1}} + |\Om|^{1-\frac{1}{r}- \frac{sp}{n}} \right),
\end{equation}
for $C=C(p)>0$ is constant. We now suitably choose
\[ \rho = \left(T_{p,s} |u|^{\alpha}_{p_s^*} \right)^{\frac{1}{p-1}} |\Om|^{\frac{-1}{p-1}\left(1-\frac{1}{r}- \frac{sp}{n}\right)}\]
and let $\beta \geq 1$ be such that
\[ \frac{1}{\beta} \left(\frac{\beta+p-1}{p}\right)^{p} \leq \beta^{p-1}.\]
In addition, if we let $\tau = \beta r$ and $\nu= \frac{p_s^*}{pr} > 1$, then the above inequality uces to
\begin{equation}\label{iterate}
|(u_K+\rho)|_{\nu \tau} \leq \left( C |\Om|^{1-\frac{1}{r}- \frac{sp}{n}} \right)^{\frac{r}{\tau}} \left( \frac{\tau}{r}\right)^{\frac{(p-1)r}{\tau}} |(u_K+\rho)|_{\tau}
\end{equation}
At this stage itself, if we take $K \rightarrow \infty$, we can say that $(u-1)^+ \in L^{m}(\Om)$, for all $m$. This will imply that $u \in L^{m}(\Om)$, for all $m$. Now, we iterate \eqref{iterate} using $\tau_0 = r$ and
\[\tau_{m+1}= \nu \tau_m = \nu^{m+1}r\]
which gives
\begin{equation}\label{limit}
|(u_K+\rho)|_{\tau_{m+1}} \leq \left( C |\Om|^{1-\frac{1}{r}- \frac{sp}{n}} \right)^{\sum\limits_{i=0}^{m}\frac{r}{\tau_i}} \left( \prod_{i=0}^{m} \left(\frac{\tau_i}{r}\right)^{\frac{r}{\tau_i}}\right)^{p-1}|(u_K+\rho)|_r.
\end{equation}
Since $\nu >1 $,
\[ \sum\limits_{i=0}^{\infty}\frac{r}{\tau_i} = \sum\limits_{i=0}^{m} \frac{1}{\nu^i} = \frac{\nu}{\nu-1} \]
and
\[ \prod_{i=0}^{\infty}\left( \left(\frac{\tau_i}{r}\right)^{\frac{r}{\tau_i}}\right)^{p-1} = \nu^{\frac{\nu}{(\nu-1)^2}. }\]
Taking limit as $n \rightarrow 0$ in \eqref{limit}, we finally get
\[|u_K|_{\infty} \leq \left(C \nu^{\frac{\nu}{(\nu-1)^2}} \right)^{p-1} \left(|\Om|^{1-\frac{1}{r}- \frac{sp}{n}} \right)^{\frac{\nu}{\nu-1}} |(u_K+\rho)|_r.\]
Since $u_K \leq (u-1)^+$, using triangle inequality in above inequality we get,
\[|u_K|_{\infty} \leq C \left(\nu^{\frac{\nu}{(\nu-1)^2}} \right)^{p-1} \left(|\Om|^{1-\frac{1}{r}- \frac{sp}{n}} \right)^{\frac{\nu}{\nu-1}} \left( |(u-1)^+|_r + \rho |\Om|^{\frac{1}{r}} \right)\]
for some constant $C=C(p)>0$. If we now let $K \rightarrow \infty$, we get
\[ |(u-1)^+|_{\infty} \leq C \left(\nu^{\frac{\nu}{(\nu-1)^2}} \right)^{p-1} \left(|\Om|^{1-\frac{1}{r}- \frac{sp}{n}} \right)^{\frac{\nu}{\nu-1}} \left( |(u-1)^+|_r + \rho |\Om|^{\frac{1}{r}} \right).\]
Hence in particular, we say that $u \in L^{\infty}(\Om)$.\QED
\end{proof}

\begin{Theorem}
Let $u$ be a positive solution of $P_{\la}$. Then there exist $\gamma \in (0,s]$ such that $u \in C_{loc}^{\gamma}(\Om^{\prime})$, for all $\Om^{\prime} \subset \subset\Om$.
\end{Theorem}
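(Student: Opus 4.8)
The plan is to localise the equation so as to strip the right-hand side of its singularity, and then to invoke the interior H\"older regularity theory for the fractional $p$-Laplacian with bounded data. Fix $\Om'\subset\subset\Om$ and choose an intermediate open set $\Om''$ with $\Om'\subset\subset\Om''\subset\subset\Om$. \emph{Step 1 (interior lower bound).} I would first show that \emph{every} positive weak solution $u$ of $(P_\la)$ satisfies $u\ge\phi=\eta\phi_1$ in $\Om$, by repeating the argument of Lemma \ref{le05}. The only property of the minimiser used there is the inequality produced by testing with functions $w\in X_{+}$, and by Lemma \ref{weaksoldef} an arbitrary positive weak solution satisfies the corresponding identity (hence, a fortiori, that inequality) for every $w\in X_0$; combining this with the fact that $\phi$ is a subsolution of $(P_\la)$, cf. \eqref{eq3.4}, and with $\phi^{\alpha+q}\le\la q/\alpha$, the test functions $w_k\uparrow(\phi-u)^{+}$ force $\|(\phi-u)^{+}\|=0$, i.e. $u\ge\phi$. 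Since $\phi_1>0$ and $\phi_1\not\equiv 0$, the weak Harnack inequality for $(-\De_p)^s$ gives $\inf_{\Om''}\phi_1>0$, so we obtain a constant $c_0=\eta\inf_{\Om''}\phi_1>0$ with $u\ge c_0$ a.e. on $\Om''$.

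\emph{Step 2 (bounded source).} By Theorem \ref{reg1} we have $u\in L^{\infty}(\Om)$, and since $u\equiv0$ on $\R^n\setminus\Om$ it follows that $u\in L^{\infty}(\R^n)\cap X_0$; in particular $u$ has finite nonlocal tail on every ball. Setting $h:=\la u^{-q}+u^{\alpha}$, Step 1 and the $L^\infty$ bound yield
\[ 0\le h\le \la c_0^{-q}+\|u\|_{L^{\infty}(\Om)}^{\alpha}=:M \qquad\text{a.e. on }\Om'', \]
so $h\in L^{\infty}(\Om'')$ and $u$ is a bounded weak solution of $(-\De_p)^s u=h$ in $\Om''$.

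\emph{Step 3 (interior regularity with bounded data).} Finally I would apply the interior H\"older estimate for bounded weak solutions of $(-\De_p)^s u=h$ with $h\in L^{\infty}$: there exist $\gamma=\gamma(n,p,s)\in(0,s]$ and $C>0$, depending only on $n,p,s$, $\operatorname{dist}(\Om',\pa\Om'')$, $\|u\|_{L^{\infty}(\R^n)}$ and $\|h\|_{L^{\infty}(\Om'')}$, such that $[u]_{C^{\gamma}(\Om')}\le C$. This follows from the nonlocal De Giorgi--Nash--Moser theory for the fractional $p$-Laplacian (Di Castro--Kuusi--Palatucci; Brasco--Lindgren--Schikorra; Iannizzotto--Mosconi--Squassina), applied on balls contained in $\Om''$, the bounded right-hand side being absorbed exactly as in the local theory. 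Since $\Om'\subset\subset\Om$ was arbitrary, $u\in C^{\gamma}_{loc}(\Om')$ for all $\Om'\subset\subset\Om$, which is the assertion.

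\emph{Main obstacle.} The singular nonlinearity enters only through $u^{-q}\to\infty$ as $x\to\pa\Om$, which is exactly why no regularity up to the boundary can be hoped for and why the statement must be local; the heart of the argument is therefore Step 1, which upgrades the qualitative positivity $u>0$ to the quantitative interior bound $\inf_{\Om''}u>0$ that renders $h$ bounded on $\Om''$. I would take care to verify that the comparison argument of Lemma \ref{le05} uses nothing but the weak-solution property of $u$ (it does, via Lemma \ref{weaksoldef}) rather than minimality on $\mc N_\la^{\pm}$. Once the interior lower bound is secured, Steps 2 and 3 are routine.
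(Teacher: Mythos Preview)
Your proposal is correct and follows essentially the same route as the paper: obtain the interior lower bound $u\ge\phi=\eta\phi_1$, combine it with the $L^\infty$ bound of Theorem~\ref{reg1} to make the right-hand side $\la u^{-q}+u^{\alpha}$ locally bounded, and then invoke the interior H\"older theory of \cite{Asm}. If anything, you are more careful than the paper on one point: Lemma~\ref{le05} is stated only for the minimisers on $\mc N_\la^{\pm}$, and you correctly observe that its proof actually needs only the inequality \eqref{eq4.2}, which for an arbitrary positive weak solution follows (as an equality) from Lemma~\ref{weaksoldef}; the paper simply cites Lemma~\ref{le05} without making this extension explicit.
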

\begin{proof}
Let $\Om^{\prime}\subset \subset \Om$. Then using lemma \ref{le05} and above regularity result, for any $\psi \in C_{c}^{\infty}(\Om)$ we get
\begin{equation*}
 \la \int_{\Om^{\prime}}u^{-q}\psi dx + \int_{\Om^{\prime}}u^{\alpha}\psi  dx\leq \la \int_{\Om^{\prime}}\phi_{1}^{-q}\psi  dx+ \|u\|_{\infty}^{\alpha} \int_{\Om^{\prime}}\psi dx \leq C \int_{\Om^{\prime}}\psi dx
\end{equation*}
for some constant $C>0$, since we can find $k>0$ such that $\phi_1>k$ on $\Om^{\prime}$. Thus we have $|(-\De_{p})^su|\leq C$ weakly on $\Om^{\prime}$. So, using theorem 4.4 of \cite{Asm} and applying a covering argument on inequality in corollary 5.5 of \cite{Asm}, we can prove that there exist $\gamma \in (0,s] $ such that $u \in C_{loc}^{\gamma}(\Om^{\prime})$, for all $\Om^{\prime} \Subset \Om$.\QED
\end{proof}


\section{Global existence of solution}
Let us define $\La = \sup \{\la > 0: (P_{\la}) \text{ has a solution}\} $.
\begin{Lemma}\label{lem7.2new}
$\La < +\infty.$
\end{Lemma}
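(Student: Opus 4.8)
The plan is to show that if $\la$ is too large then $(P_\la)$ cannot have any weak solution, by testing the equation against the first eigenfunction $\phi_1$ and exploiting the super-linearity of the right-hand side at $t=0$ coming from the singular term $\la u^{-q}$ together with the term $u^\alpha$. The key elementary observation is that the map $t \mapsto \la t^{-q} + t^\alpha$ (for $t>0$) has a strictly positive lower bound over $(0,\infty)$ which grows with $\la$; more precisely, minimizing in $t$ gives $\la t^{-q} + t^\alpha \geq c(q,\alpha)\, \la^{\frac{\alpha}{\alpha+q}}$ for all $t > 0$, where $c(q,\alpha)>0$ depends only on $q$ and $\alpha$. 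Hence for $\la$ large one has $\la t^{-q} + t^\alpha > (\la_1 + 1)\, t^{p-1}$ for all $t>0$, i.e. the nonlinearity dominates the linear eigenvalue term.

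First I would suppose, for contradiction, that $(P_\la)$ admits a positive weak solution $u$ for some $\la$ satisfying $c(q,\alpha)\la^{\frac{\alpha}{\alpha+q}} > (\la_1+1)\sup_{t>0} t^{p-1-?}$—more carefully, I would choose $\la$ so large that $\la s^{-q} + s^\alpha \geq (\la_1+1) s^{p-1}$ holds for every $s>0$ (possible since the left side has a positive lower bound $\sim \la^{\alpha/(\alpha+q)}$ while, after the substitution $s = \|u\|_\infty t$, one can absorb constants; alternatively, argue pointwise as in the proof of Lemma \ref{lem4.4}). Then $u$ is a weak supersolution of the eigenvalue problem $(-\De_p)^s w = (\la_1+1)|w|^{p-2}w$ in $\Om$, $w=0$ in $\R^n\setminus\Om$, while $r\phi_1$ is a subsolution for $r>0$ small, and by Theorem \ref{reg1} (boundedness of $u$) together with boundedness of $\phi_1$ we may pick $r$ small enough that $r\phi_1 \leq u$ in $\Om$. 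Running the monotone iteration
\[
u_0 = r\phi_1, \qquad (-\De_p)^s u_m = (\la_1+1)|u_{m-1}|^{p-2}u_{m-1} \text{ in }\Om,
\]
and using the weak comparison principle Lemma \ref{lem4.3}, one gets an increasing sequence $r\phi_1 \leq u_1 \leq u_2 \leq \cdots \leq u$, bounded in $X_0$, whose weak limit $u_\infty \not\equiv 0$ solves $(-\De_p)^s u_\infty = (\la_1+1)|u_\infty|^{p-2}u_\infty$. This contradicts the fact that $\la_1$ is the smallest eigenvalue of $(-\De_p)^s$ on $X_0$, since $\la_1+1 > \la_1$ would then also be an eigenvalue with a positive eigenfunction, contradicting the variational characterization (and simplicity/isolatedness) of $\la_1$. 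Therefore no such $\la$ can admit a solution, so $\La \leq \big((\la_1+1)/c(q,\alpha)\big)^{(\alpha+q)/\alpha} \cdot (\text{const}) < \infty$.

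The main obstacle is making the pointwise domination $\la t^{-q} + t^\alpha \geq (\la_1+1) t^{p-1}$ uniform in the right variable: the solution $u$ is a priori unbounded-looking, so one must first invoke Theorem \ref{reg1} to know $u \in L^\infty(\Om)$, and then the inequality only needs to hold for $t$ in the bounded range $(0,\|u\|_\infty]$—but $\|u\|_\infty$ itself may depend on $\la$, so one has to be a little careful that the threshold on $\la$ can be chosen independently of the (hypothetical) solution. This is handled exactly as in Lemma \ref{lem4.4}: on $\{0 < t \leq \|u\|_\infty\}$ the term $t^\alpha$ is bounded, but the singular term $\la t^{-q} \to \infty$ as $t \to 0^+$ and $\la t^{-q} \geq \la \|u\|_\infty^{-q}$ throughout, so it suffices to note that $u = \|u\|_\infty \cdot \tilde u$ with $\|\tilde u\|_\infty = 1$ and to compare on the normalized range $(0,1]$, where $\la s^{-q} \geq \la$; choosing $\la > \la_1 + 1$ then already gives $\la s^{-q} + s^\alpha \geq \la \geq (\la_1+1) \geq (\la_1+1)s^{p-1}$ for $s \in (0,1]$ (using $p-1>0$). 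After rescaling back this yields the supersolution property and the rest of the argument goes through verbatim. A cleaner alternative, avoiding rescaling, is to test directly against $\phi_1$: from $\langle (-\De_p)^s u, \phi_1\rangle = \la_1 \int u^{p-1}\phi_1$ (valid since $\phi_1$ is an admissible test function by Lemma \ref{weaksoldef}) and $\langle (-\De_p)^s u, \phi_1\rangle = \int(\la u^{-q}+u^\alpha)\phi_1 \geq c(q,\alpha)\la^{\alpha/(\alpha+q)}\int\phi_1$, one gets $\la_1 \int u^{p-1}\phi_1 \geq c(q,\alpha)\la^{\alpha/(\alpha+q)}\int\phi_1$, and symmetrically testing the eigenfunction equation against $u$ forces an $L^\infty$ bound on $u$ in terms of $\la$ that is incompatible with $\la \to \infty$—this route needs a reverse estimate $\int u^{p-1}\phi_1 \leq C\|u\|_\infty^{p-1}$ and an independent a priori bound, so the monotone-iteration argument above is the safer one to write out in full.
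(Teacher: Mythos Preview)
Your approach is essentially the paper's: the proof of Lemma~\ref{lem7.2new} is just ``same as Lemma~\ref{lem4.4}'', i.e.\ for large $\la$ any solution $u$ is a supersolution of $(-\De_p)^s w = (\la_1+\e)|w|^{p-2}w$, a small multiple of $\phi_1$ is a subsolution below $u$, and monotone iteration (with Lemma~\ref{lem4.3} and Theorem~\ref{reg1}) produces a positive eigenfunction at level $\la_1+\e$, contradicting simplicity/isolatedness of $\la_1$. So the skeleton is correct and identical.

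The one place where you stumble is the elementary inequality $\la t^{-q}+t^{\alpha} > (\la_1+\e)t^{p-1}$ for \emph{all} $t>0$, which you first assert via the constant lower bound $c(q,\alpha)\la^{\alpha/(\alpha+q)}$ (this does not dominate $t^{p-1}$ for large $t$) and then try to repair by rescaling $u=\|u\|_\infty\tilde u$. That rescaling is circular: under $u\mapsto M\tilde u$ the equation becomes $(-\De_p)^s\tilde u=\la M^{1-p-q}\tilde u^{-q}+M^{\alpha-p+1}\tilde u^{\alpha}$, so the effective ``$\la$'' in front of the singular term is $\la\|u\|_\infty^{1-p-q}$, and you have no a priori control on $\|u\|_\infty$ in terms of $\la$. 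The clean fix (implicit in the paper's one-line claim) is to divide by $t^{p-1}$ and minimize $g(t)=\la t^{1-p-q}+t^{\alpha+1-p}$ over $t>0$: since $q>0$, $p>1$ and (crucially) $\alpha>p-1$ in this paper, the first term blows up at $0$, the second at $\infty$, and the minimum value is a positive constant times $\la^{(\alpha+1-p)/(\alpha+q)}\to\infty$ as $\la\to\infty$. So for $\la$ large the inequality holds for every $t>0$, independently of any solution, and no rescaling or $L^\infty$ bound is needed for this step (the $L^\infty$ bound is only used to place $r\phi_1$ below $u$). With this correction your write-up is complete and matches the paper.
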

\begin{proof}
The proof follows similarly as the proof of Lemma \ref{lem4.4}.\QED
\end{proof}

\noi In the following lemmas, we will show the existence of solution of $(P_{\la})$.

\begin{Lemma}\label{lem7.2}
If $\underline{u} \in X_0$ is a weak sub-solution and $\overline{u} \in X_0$ is a weak super-solution of ($P_{\la}$) such that $\underline{u} \leq \overline{u}$ a.e. in $\Om$, then there exists a weak solution $u \in X_{0}$ satisfying $\underline{u} \leq u \leq \overline{u}$.
\end{Lemma}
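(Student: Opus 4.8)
The plan is to obtain the solution $u$ as a minimizer of the energy functional $I_\la$ over the closed convex set
\[
M = \{ w \in X_{+,q} : \uline{u} \le w \le \oline{u} \text{ a.e. in } \Om \}.
\]
First I would check that $M$ is nonempty (since $\uline{u}\in M$, using that $\uline u >0$ in $\Om$ guarantees $G_q(\uline u)\in L^1(\Om)$ because $\uline u \le \oline u$ and the super-solution is controlled), closed under weak convergence in $X_0$, and convex. Next, $I_\la$ restricted to $M$ is bounded below and coercive: for $w\in M$ one has $0<\uline u \le w \le \oline u$, so the singular term $\la\int_\Om G_q(w)\,dx$ is bounded (both above and below) uniformly on $M$, and the term $\frac{1}{\alpha+1}\int_\Om |w|^{\alpha+1}\,dx$ is bounded since $w\le\oline u \in X_0 \subset L^{\alpha+1}(\Om)$ when $\alpha<p_s^*-1$ and is still controlled in the critical case because $w$ is trapped below a fixed $L^{p_s^*}$ function; thus $I_\la(w) \ge \frac1p\|w\|^p - C$. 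Hence a minimizing sequence $\{w_k\}\subset M$ is bounded in $X_0$, and up to a subsequence $w_k \rightharpoonup u$ weakly in $X_0$, strongly in $L^r(\Om)$ for $r<p_s^*$, and a.e.; the a.e. convergence preserves the pointwise constraint, so $u\in M$. Weak lower semicontinuity of $w\mapsto \|w\|^p$ together with dominated convergence for the lower-order terms (all integrands are dominated using $\uline u \le w_k \le \oline u$) gives $I_\la(u) = \inf_M I_\la$.

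The second step is to show that the constrained minimizer $u$ is in fact an unconstrained critical point in the weak sense, i.e. it solves $(P_\la)$. Here I would use the truncation argument as in \cite{yh}: fix $\psi \in C_c^\infty(\Om)$ and $\epsilon>0$, and consider the admissible test direction obtained by projecting $u+\epsilon\psi$ back onto the order interval, namely
\[
w_\epsilon = \min\{\oline u,\ \max\{\uline u,\ u+\epsilon\psi\}\} = u + \epsilon\psi - \varphi^\epsilon + \varphi_\epsilon,
\]
where $\varphi^\epsilon = (u+\epsilon\psi-\oline u)^+ \ge 0$ and $\varphi_\epsilon = (u+\epsilon\psi-\uline u)^- \ge 0$ both lie in $X_0$ and vanish outside $\{u+\epsilon\psi >\oline u\}$, resp. $\{u+\epsilon\psi<\uline u\}$. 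Since $w_\epsilon \in M$, minimality gives $\langle I_\la'(u), w_\epsilon - u\rangle \ge 0$ (the Gateaux derivative of $I_\la$ at $u$ in admissible directions exists because $u \ge \uline u >0$ keeps us away from the singularity, exactly as in Lemma \ref{le03} and Lemma \ref{lem3.6}), that is
\[
\langle I_\la'(u),\psi\rangle \ge \frac1\epsilon\langle I_\la'(u),\varphi^\epsilon\rangle - \frac1\epsilon\langle I_\la'(u),\varphi_\epsilon\rangle.
\]
Then I would estimate the two correction terms: using that $\oline u$ is a super-solution and $\uline u$ a sub-solution, together with the monotonicity of the map $a\mapsto |a|^{p-2}a$ and the standard inequalities for the fractional $p$-Laplacian bilinear form (the same ones used in the proof of Lemma \ref{le05}), one shows $\frac1\epsilon\langle I_\la'(u),\varphi^\epsilon\rangle \ge o(1)$ and $-\frac1\epsilon\langle I_\la'(u),\varphi_\epsilon\rangle \ge o(1)$ as $\epsilon\to 0^+$, because $|\{u+\epsilon\psi>\oline u\}|\to 0$ and $|\{u+\epsilon\psi<\uline u\}|\to 0$. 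Letting $\epsilon\to 0^+$ yields $\langle I_\la'(u),\psi\rangle \ge 0$ for all $\psi\in C_c^\infty(\Om)$, and replacing $\psi$ by $-\psi$ gives equality, so $u$ is a weak solution of $(P_\la)$ with $\uline u \le u \le \oline u$; positivity is immediate from $u\ge\uline u>0$.

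The main obstacle I anticipate is the careful handling of the two truncation corrections $\varphi^\epsilon$ and $\varphi_\epsilon$ in the nonlocal setting: unlike the local case, the fractional bilinear form does not localize, so showing $\frac1\epsilon\langle I_\la'(u),\varphi^\epsilon\rangle \ge o(1)$ requires splitting the double integral over $Q$ into the regions where the relevant points lie inside or outside the small sets $\{u+\epsilon\psi>\oline u\}$ and exploiting the super-solution inequality \eqref{eq3.4}-type relation for $\oline u$ tested against $\varphi^\epsilon/\epsilon$, exactly in the spirit of the region-splitting done in the proof of Lemma \ref{le05} (the four-piece decomposition over $\Om_1\times\Om_1$, $\Om_1\times\Om_2$, etc.). The singular term needs a separate but easy argument since on the support of $\varphi^\epsilon$ we have $u\ge\uline u$ bounded away from zero, so $u^{-q}$ is bounded there. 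Once these estimates are in place the rest is routine.
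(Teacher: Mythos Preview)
Your proposal is correct and follows essentially the same route as the paper: minimize $I_\la$ over the order interval $M=\{w\in X_0:\uline u\le w\le \oline u\}$, then show the constrained minimizer is unconstrained via the truncation $w_\epsilon=u+\epsilon\psi-\varphi^\epsilon+\varphi_\epsilon$ from \cite{yh}, estimating the correction terms $\tfrac1\epsilon\langle I_\la'(u),\varphi^\epsilon\rangle$ and $\tfrac1\epsilon\langle I_\la'(u),\varphi_\epsilon\rangle$ by the four-region decomposition of $Q$ exactly as in Lemma~\ref{le05}. The only cosmetic difference is that the paper bounds the nonlocal part of $H^\epsilon$ directly via the algebraic inequality $|a-b|^p\le 2^{p-2}(|a|^{p-2}a-|b|^{p-2}b)(a-b)$ rather than first invoking the super-solution inequality for $\oline u$, but the structure and the key $o(1)$ estimate (driven by $|\{u+\epsilon\psi>\oline u\}|\to 0$) are identical.
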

\begin{proof}
We follow \cite {yh}. We know that the functional $I$ is non- differentiable in $X_0$. Let $M:= \{u \in X_{0} : \underline{u} \leq u \leq \overline{u}\}$, then M is closed, convex and $I$ is weakly lower semicontinuous on $M$. We can see that if $\{u_k\} \subset M$ and $u_k \rightharpoonup u$ in $X_0$ as $k \rightarrow \infty$, we may assume $u_k \rightarrow u$ pointwise a.e. in $\Om$ (along a subsequence). Since $u \in M$, $\int_{\Om}|\overline{u}|^{\alpha+1} dx < + \infty$ and $\int_{\Om}|\overline{u}|^{1-q}  dx< + \infty$, then by Lebesgue Dominated Convergence theorem,
\[\int_{\Om}|u_k|^{\alpha+1}  dx \rightarrow \int_{\Om}|u|^{\alpha+1} dx\; \text{ and } \; \int_{\Om}|u_k|^{1-q} dx \rightarrow \int_{\Om}|u|^{1-q}dx.\]
So, ${\underline{\lim}}_{k\rightarrow \infty}I(u_k) \geq I(u)$. Thus, there exist $u \in M$ such that $I(u)= \inf_{u_0 \in M}I(u_0)$. We claim that $u$ is a weak solution of $(P_{\la})$. For $\e > 0$ and $\varphi \in X_0$, define $v_{\e} = u+\e\varphi-\varphi^{\e}+\varphi_{\e} \in M$ where $\varphi^{\e} =(u+\e\varphi-\overline{u})^+ \geq 0$ and $\varphi_{\e} =(u+\e\varphi-\underline{u})^- \geq 0$. For $t \in (0,1)$, $u+t(v_{\e}-u) \in M$ and we have
{\small \begin{equation*}
\begin{split}
0 & \leq \frac{I(u+t(v_{\e}-u))-I(u)}{t}\\
&= \lim_{t \rightarrow 0}\left( \frac{1}{pt}(\|u+t(v_{\e}-u)\|^p -\|u\|^p) +\la\int_{\Om}\frac{(G_q(u+t(v_{\e}-u))-G_q(u))}{t} dx\right.\\
& \left.\quad \quad \quad \quad - \frac{1}{\alpha+1}\int_{\Om} \frac{|u+t(v_{\e}-u)|^{\alpha+1}- |u|^{\alpha+1}}{t} dx \right)\\
& = \int_{Q}\frac{|u(x)-u(y)|^{p-2}(u(x)-u(y))((v_{\e}-u)(x)-(v_{\e}-u)(y))}{|x-y|^{n+sp}}~dxdy - \la \int_{\Om} u^{-q}(v_{\e}-u)dx\\
& \quad \quad \quad \quad - \int_{\Om}u^{\alpha}(v_{\e}-u) dx
\end{split}
\end{equation*}}
which gives
\begin{equation}\label{eq7.2}
\int_{Q}\frac{|u(x)-u(y)|^{p-2}(u(x)-u(y))(\varphi(x)-\varphi(y))}{|x-y|^{n+sp}}dxdy - \int_{\Om}(\la u^{-q}+u^{\alpha})\varphi dx \geq \frac{1}{\e}(H^{\e}-H_{\e})
\end{equation}
where
 \begin{align*}
 H^{\e} &= \int_{Q}\frac{|u(x)-u(y)|^{p-2}(u(x)-u(y))(\varphi^{\e}(x)-\varphi^{\e}(y))}{|x-y|^{n+sp}}~dxdy - \int_{\Om}(\la u^{-q}+u^{\alpha})\varphi^{\e} dx\\
 H_{\e} &= \int_{Q}\frac{|u(x)-u(y)|^{p-2}(u(x)-u(y))(\varphi_{\e}(x)-\varphi_{\e}(y))}{|x-y|^{n+sp}}~dxdy - \int_{\Om}(\la u^{-q}+u^{\alpha})\varphi_{\e} dx.
 \end{align*}
\noi Now we consider
 \begin{equation*}
\begin{split}
\frac{1}{\e}H^{\e}= \frac{1}{\e}\left(\int_{Q}\frac{|u(x)-u(y)|^{p-2}(u(x)-u(y))(\varphi^{\e}(x)-\varphi^{\e}(y))}{|x-y|^{n+sp}}~dxdy - \int_{\Om}(\la u^{-q}+u^{\alpha})\varphi^{\e}dx\right)
\end{split}
\end{equation*}
Let $\Om_1 = \{u+\e\varphi \geq \overline{u} >u\}$ and $\Om_2 = \{u+\e\varphi < \underline{u}\}$, then using the technique of Lemma \ref{le05}, we get
\begin{equation*}
\begin{split}
\frac{1}{\e} \int_{Q} &  \frac{|u(x)-u(y)|^{p-2}(u(x)-u(y))(\varphi^{\e}(x)-\varphi^{\e}(y))}{|x-y|^{n+sp}}~dxdy\\
&=\frac{1}{\e}\left(\int_{\Om_1\times\Om_1}+\int_{\Om_1\times\Om_2}+\int_{\Om_2\times\Om_1}\right)\frac{|u(x)-u(y)|^{p-2}(u(x)-u(y))
(\varphi^{\e}(x)-\varphi^{\e}(y))}{|x-y|^{n+sp}}dxdy\\
&=\frac{1}{\e}\int_{\Om_1\times\Om_1}\frac{|u(x)-u(y)|^{p-2}(u(x)-u(y))((u-\overline{u})(x)-(u-\overline{u})(y))}{|x-y|^{n+sp}}~dxdy\\
&\quad \quad +\int_{\Om_1\times\Om_1}\frac{|u(x)-u(y)|^{p-2}(u(x)-u(y))(\varphi(x)-\varphi(y))}{|x-y|^{n+sp}}~dxdy \\
&\quad \quad+\frac{1}{\e}\int_{\Om_1\times\Om_2}\frac{|u(x)-u(y)|^{p-2}(u(x)-u(y))}{|x-y|^{n+sp}}(u-\overline{u})(x)~dxdy\\
&\quad \quad+\int_{\Om_1\times\Om_2}\frac{|u(x)-u(y)|^{p-2}(u(x)-u(y))}{|x-y|^{n+sp}}\varphi(x)~dxdy\\
&\quad \quad-\frac{1}{\e}\int_{\Om_2\times\Om_1}\frac{|u(x)-u(y)|^{p-2}(u(x)-u(y))}{|x-y|^{n+sp}}(u-\overline{u})(y)~dxdy\\
&\quad \quad- \int_{\Om_2\times\Om_1}\frac{|u(x)-u(y)|^{p-2}(u(x)-u(y))}{|x-y|^{n+sp}}\varphi(y)~dxdy\\
&\geq\frac{3}{\e 2^{p-2}}\int_{\Om_1\times \Om_1} \frac{|(u-\overline{u})(x)-(u-\overline{u})(y)|^p}{|x-y|^{n+sp}}~dxdy \\
& \quad\quad+ \int_{\Om_1\times\Om_1}\frac{|u(x)-u(y)|^{p-2}(u(x)-u(y))(\varphi(x)-\varphi(y))}{|x-y|^{n+sp}}~dxdy\\
& \geq \int_{\Om_1\times\Om_1}\frac{|u(x)-u(y)|^{p-2}(u(x)-u(y))(\varphi(x)-\varphi(y))}{|x-y|^{n+sp}}~dxdy
\end{split}
\end{equation*}
where we used the inequality $|a-b|^p \leq 2^{p-2}(|a|^{p-2}a -|b|^{p-2}b)(a-b)$, for $ p\geq 2$ and $a,b \in \mb R$. Thus,
\begin{equation*}
\begin{split}
\frac{1}{\e}H^{\e}& \geq \int_{\Om_1\times \Om_1}\frac{|u(x)-u(y)|^{p-2}(u(x)-u(y))(\varphi(x)-\varphi(y))}{|x-y|^{n+sp}}~dxdy -\int_{\Om_1}(\la u^{-q}+u^{\alpha})\varphi^{\e}dx\\
& \geq \int_{\Om_1\times \Om_1}\frac{|u(x)-u(y)|^{p-2}(u(x)-u(y))(\varphi(x)-\varphi(y))}{|x-y|^{n+sp}}~dxdy - \int_{\Om_1}|\la \overline{u}^{-q}-{u}^{-q}||\varphi|dx\\
&=o(1)
\end{split}
\end{equation*}
as $\e \rightarrow 0$, since meas$(\Om_1) \rightarrow 0$ as $\e \rightarrow 0$. Similarly, as $\e \rightarrow 0$ we can show
that $\displaystyle \frac{1}{\e}H_{\e} \leq o(1).$\\
Therefore, from \eqref{eq7.2} taking $\e \rightarrow 0$, we get
\small{\[\int_{Q}\frac{|u(x)-u(y)|^{p-2}(u(x)-u(y))(\varphi(x)-\varphi(y))}{|x-y|^{n+sp}}~dxdy- \int_{\Om} \left(\la u^{-q}+u^{\alpha}\right)\varphi dx\geq o(1).\]}
Since $\varphi \in X_0$ is arbitrary, for all $\varphi\in X_{0}$ we get\\

$\displaystyle \int_{Q}\frac{|u(x)-u(y)|^{p-2}(u(x)-u(y))(\varphi(x)-\varphi(y))}{|x-y|^{n+sp}}~dxdy- \int_{\Om}(\la u^{-q}+u^{\alpha})\varphi dx=0. $\QED
\end{proof}
\begin{Proposition}\label{lem7.3}
For $\la \in (0,\La)$, ($P_{\la}$) has a weak solution $u_{\la} \in X_0$.
\end{Proposition}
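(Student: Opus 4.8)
The plan is to deduce solvability of $(P_\la)$ from solvability of $(P_\mu)$ for a slightly larger parameter $\mu$, by exhibiting an ordered pair of sub- and supersolutions and invoking Lemma \ref{lem7.2}. Fix $\la\in(0,\La)$. Since $\la<\La=\sup\{\mu>0:(P_\mu)\text{ has a solution}\}$, we may choose $\mu>\la$ for which $(P_\mu)$ has a positive weak solution $u_\mu$. The first step is to observe that $\oline u:=u_\mu$ is a weak supersolution of $(P_\la)$: for every $0\le\psi\in C_c^\infty(\Om)$,
\[\int_Q\frac{|u_\mu(x)-u_\mu(y)|^{p-2}(u_\mu(x)-u_\mu(y))(\psi(x)-\psi(y))}{|x-y|^{n+sp}}\,dxdy=\int_\Om\left(\mu u_\mu^{-q}+u_\mu^{\alpha}\right)\psi\,dx\ge\int_\Om\left(\la u_\mu^{-q}+u_\mu^{\alpha}\right)\psi\,dx,\]
using $\mu>\la$ and $u_\mu>0$ in $\Om$.

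Next I would produce a subsolution of $(P_\la)$ lying below $\oline u$, of the form $\uline u=\eta\phi_1$. By Theorem \ref{reg1}, $M:=\|u_\mu\|_{L^\infty(\Om)}<\infty$; since $\|\phi_1\|_{L^\infty}=1$ and $(-\De_p)^s(\eta\phi_1)=\la_1\eta^{p-1}\phi_1^{p-1}$ by $(p-1)$-homogeneity, choosing $\eta>0$ small enough that $\la_1\eta^{p-1}\le\mu M^{-q}$ gives, weakly on $\Om$,
\[(-\De_p)^s(\eta\phi_1)=\la_1\eta^{p-1}\phi_1^{p-1}\le\mu M^{-q}\le\mu u_\mu^{-q}\le\mu u_\mu^{-q}+u_\mu^{\alpha}=(-\De_p)^s u_\mu.\]
As $\eta\phi_1$ and $u_\mu$ coincide (namely vanish) on $\R^n\setminus\Om$, the weak comparison principle Lemma \ref{lem4.3} yields $\eta\phi_1\le u_\mu$ in $\Om$. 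Shrinking $\eta$ further if needed so that $\eta^{p-1+q}\le\la/\la_1$, hence $(\eta\phi_1)^{p-1+q}\le\la/\la_1$ on $\Om$ since $\phi_1\le1$, the computation behind \eqref{eq3.4} shows $\la_1\eta^{p-1}\phi_1^{p-1}\le\la(\eta\phi_1)^{-q}\le\la(\eta\phi_1)^{-q}+(\eta\phi_1)^{\alpha}$, so $\uline u=\eta\phi_1$ is a weak subsolution of $(P_\la)$ with $\uline u\le\oline u$ a.e. in $\Om$.

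With the ordering $\uline u\le\oline u$ established, Lemma \ref{lem7.2} provides a weak solution $u_\la\in X_0$ of $(P_\la)$ with $\eta\phi_1\le u_\la\le u_\mu$; in particular $u_\la>0$ in $\Om$, so it is a positive weak solution, which completes the proof. I expect the ordering $\uline u\le\oline u$ to be the main obstacle: its verification relies on the global $L^\infty$ estimate of Theorem \ref{reg1} applied to the solution $u_\mu$ of $(P_\mu)$ together with the weak comparison principle Lemma \ref{lem4.3}, and one must also check — as in the derivation of \eqref{eq3.4} — that a sufficiently small dilation of $\phi_1$ remains a subsolution of $(P_\la)$.
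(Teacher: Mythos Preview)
Your proof is correct and follows the same strategy as the paper: pick $\mu>\la$ with a solution $u_\mu$, use $\oline u=u_\mu$ as supersolution, use a small multiple of $\phi_1$ as subsolution, and invoke Lemma~\ref{lem7.2}. The only difference is that the paper simply asserts one may choose $t>0$ with $t\phi_1\le\oline u$, whereas you justify this ordering rigorously via the $L^\infty$ bound of Theorem~\ref{reg1} and the comparison principle Lemma~\ref{lem4.3}; this extra care is warranted and your argument for it is sound.
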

\begin{proof}
We fix $\la \in (0, \La)$. By definition of $\La$, there exists $\la_{0} \in (\la , \La)$ such that $(P_{\la_0})$ has a solution $u_{\la_0}$ (say). Then $\overline{u} = u_{\la_0}$ becomes a super-solution of $(P_{\la})$.  Now consider the function $\phi_1$  as the eigenfunction of $(-\De_{p})^s$ corresponding to the smallest eigenvalue $\la_1$. Then $\phi_1 \in L^{\infty}(\Om)$ and
 \begin{equation*}
 \quad (-\De_{p})^s \phi_1 = \la_1 |\phi_1|^{p-2}\phi_1, \; \phi_1>0\; \text{in}\;
\Om,\quad   \phi_1 = 0 \; \mbox{on}\; \mb R^n \setminus\Om.
\end{equation*}
Let us choose $t >0$  such that $t\phi_1 \leq \overline{u}$ and $t^{p+q-1}\phi_1^{p+q-1} \leq \la/\la_1$. If we define $\underline{u}=t\phi_1$, then
\begin{equation*}
\begin{split}
(-\De_{p})^s{\underline{u}} &=  \la_1 t^{p-1}\phi_1^{p-1} \leq \la t^{-q}\phi_1^{-q}\\
& \leq \la t^{-q}\phi_1^{-q} + t^{\alpha}\phi_1^{\alpha} = \la \underline{u}^{-q}+\underline{u}^{\alpha}.
\end{split}
\end{equation*}
that is, $\underline{u}$ is a sub-solution of $(P_{\la_0})$ and $\underline{u} \leq \overline{u}$. Applying Lemma \ref{lem7.2} shows that $(P_{\la})$ has a solution for all $\la \in (0, \La)$. This completes the proof. \QED
\end{proof}
{\bf Proof of Theorem \ref{thm2.5}}: Proof follows from Proposition \ref{lem7.3} and Lemma \ref{lem7.2new}. \QED
\begin{Remark} We remark that the method in Lemma \ref{lem7.2} we can show the existence of solution for pure singular problem:
\begin{equation}\label{obs1}
\begin{split}
(-\De_p)^su = \la u^{-q} \text{ in } \Om, \quad
u = 0 \text{ in } \mb R^n \backslash \Om.
\end{split}
\end{equation}
where $0<q<1$.
We define $u$ to be a positive weak solution of (\ref{obs1}) if $u >0$ in $\Om$, $u \in X_0$ and
$$ \int_Q \frac{|u(x)-u(y)|^{p-2}(u(x)-u(y))(\psi(x)-\psi(y))}{|x-y|^{n+sp}} ~dxdy - \la \int_\Om  u^{-q} dx = 0 \;\text{for all} \; \psi \in X_0.$$
Also, we say $u \in X_0$ to be a positive weak sub-solution of $(\ref{obs1})$ if $u >0$ and
$$ \int_Q \frac{|u(x)-u(y)|^{p-2}(u(x)-u(y))(\psi(x)-\psi(y))}{|x-y|^{n+sp}} ~dxdy \leq \la \int_\Om  u^{-q}  dx\;\text{for all} \; \psi \in X_0.$$
We define the functional $J_{\la} : X_{0} \rightarrow (-\infty, \infty]$ by
\[ J_{\la}(u) = \frac{1}{p} \int_Q \frac{|u(x) - u(y)|^p}{|x-y|^{n+sp}}~dxdy - \la \int_\Om  G_q(u) dx \]
where $G_q$ is as defined in section 2. One can easily see that $J_{\la}$ is coercive, bounded below and weakly lower semicontinuous in $X_0$. Thus there exist a $u_0 \in X_0$ such that $\inf_{u \in X_0} I(u)= I(u_0)$. We claim that $u_0$ is a positive weak solution of (\ref{obs1}). We choose $t >0$ such that $t\phi_1 \leq u_0$ in $\Om$ and $t\phi_1$ is a sub-solution of  (\ref{obs1}) ($\phi_1$ is defined in proposition \ref{lem7.3}). Let us define $M := \{u \in X_{0} : \underline{u} \leq u \}$, where $\underline{u}$ is a weak sub-solution of (\ref{obs1}). Then $u_0 \in M$ and following the proof of lemma \ref{lem7.2} with $v_\e = u_0+\e \varphi + \varphi_{\e}$ where $\e>0, \varphi_{\e}= (u_0+\e \varphi-\underline{u})^-$ and $ \varphi \in X_0$, we can show that $u_0$ is a positive weak solution of ($\ref{obs1}$).

\end{Remark}


\end{document}